\definecolor{mygreen}{rgb}{0.01,0.6,0.2}
\definecolor{myblue}{rgb}{0.01, 0.18, 1.0}
\newtheorem{theorem}{Theorem}
\newtheorem{proposition}{Proposition}
\newtheorem{lemma}{Lemma}
\theoremstyle{definition}
\newtheorem{definition}[theorem]{Definition}
\newtheorem{remark}[theorem]{Remark}
\newtheorem{example}[theorem]{Example}
\numberwithin{equation}{section}
\numberwithin{theorem}{section}
\numberwithin{equation}{section}
\numberwithin{theorem}{section}
\numberwithin{lemma}{section}
\numberwithin{proposition}{section}
\title[On $(p,N)$-Laplace multivalued equations with critical exponential]{ On $(p,N)$-Laplace multivalued equations with critical exponential nonlinearity in $\mathbb{R}^N$}
\author[Ankit]{Ankit}
\address[Ankit]{Department of Mathematics, Indian Institute of Technology Jodhpur, Rajasthan 342030, India}
\email{p23ma0003@iitj.ac.in}
\author[A. Sarkar]{Abhishek Sarkar}
\address[A. Sarkar]{Department of Mathematics, Indian Institute of Technology Jodhpur, Rajasthan 342030, India}
\email{abhisheks@iitj.ac.in}
\subjclass{35A15, 35B38, 35D30, 35J60, 35J92}
\keywords{$(p,N)$-Laplace, Ekeland variational principle, Orlicz spaces, Exponential growth, Non-smooth variational methods, Moser-Trudinger inequality}
\begin{document}
\def\dx{\mathrm{d}x}

\begin{abstract}
In this paper, we study the existence of nonnegative solutions for a class of multivalued $(p,N)$-Laplace problems having discontinuous nonlinearity with critical exponential growth in $\mathbb{R}^N$. {To demonstrate the existence results, we utilized variational methods for non-differentiable functions.} \end{abstract}  
\maketitle
\section{Introduction and Main Results}\label{Section1} 
We aim to study the existence of solutions for the following class of multivalued $(p,N)$-Laplace problems
\begin{equation}\label{main problem}
    \left\{
     \begin{aligned}
       -\Delta_pu-\Delta_Nu~&+V(x)(|u|^{p-2}u+|u|^{N-2}u)-\epsilon g(x)\in\partial_tF(x,u)  \text{ in } \mathbb{R}^N,\\
       \int_{\mathbb{R}^N} V(x)|u|^p\dx<&+\infty~, \int_{\mathbb{R}^N} V(x)|u|^N\dx<+\infty, \text{ and } u\in W^{1,p}({\mathbb{R}^N})\cap W^{1,N}({\mathbb{R}^N}),
    \end{aligned}\tag{$\mathscr{P}$}
    \right.
\end{equation}
where {$N\geq 3$, $2\leq p<N$,} $\epsilon>0$,  $\Delta_{r}u=\text{div}(|\nabla u|^{r-2}\nabla u)$ for $r\in \{p,N\}$, and $\partial_{t}F(x,t)=[\underline{f}(x,t),\overline{f}(x,t)]$,
where
$$\underline{f}(x,t)=\underset{\delta\rightarrow0}{\lim}~\underset{|s-t|<\delta}{\mathrm{ess}\inf}f(x,s) \text{ and }\overline{f}(x,t)=\underset{\delta\rightarrow0}{\lim}~\underset{|s-t|<\delta}{\mathrm{ess}\sup}f(x,s). $$
The function $V:\mathbb{R}^N\rightarrow\mathbb{R}$ { verifies} the following conditions:
\begin{enumerate}[label={($\bf V{\arabic*}$)}]
\setcounter{enumi}{0} 
    \item \label{V1} $V$ is positive continuous function and $V(x)\geq V_{0}>0$ for all $x\in \mathbb{R}^N.$
\item \label{V2} {$\bigg({V^{1/(N-1)}}\bigg)^{-1}\in L^{1}(\mathbb{R}^N)$}.
\end{enumerate}

\noindent The function $g\in \left(W^{1,p}({\mathbb{R}^N})\cap W^{1,N}({\mathbb{R}^N})\right)^*$ is such that it satisfies 
\begin{enumerate}[label={($\bf g{\arabic*}$)}]
\setcounter{enumi}{0}
    
\item \label{g1} $g$ is a non-negative function and $0<\displaystyle \underset{\mathbb{R}^N}{\int} g \dx<+\infty $.
\end{enumerate}
The nonlinearity $f:\mathbb{R}^N \times \mathbb{R} \rightarrow \mathbb{R}$ is discontinuous and has exponential critical growth, i.e., there exists $\alpha_0$, such that 
$$ \underset{|u|\rightarrow +\infty}{\lim}|f(x,u)|\mathrm{exp}({-\alpha|u|^\frac{N}{N-1}})=\begin{cases}
        0~\text{ if}~\alpha>\alpha_0\\
        +\infty~ \text{ if}~ \alpha<\alpha_0.
    \end{cases},\text{ uniformly} ~~\text{in}~x\in \mathbb{R}^N.$$
    { Next, we set $F(x,t):= \int_0^tf(x,s) \mathrm{d}s$. In this article, we will operate under the assumption that the nonlinearity \( f \) has certain characteristics. They are as follows:}
    \begin{enumerate}[label={($\bf f{\arabic*}$)}]
\setcounter{enumi}{0}
    \item \label{f1} There exists $t_0\geq0$ such that 
    $$ f(x,t) \begin{cases}= 0~\text{for}~t<t_0~\text{and}~\forall~x\in \mathbb{R}^N\\
>0~\text{for}~t>t_0~\text{and}~\forall~x\in\mathbb{R}^N. \end{cases} $$
\item \label{f2} $\underset{t\rightarrow0}{\limsup}\left(\frac {N\max\{|\xi|:\xi\in \partial_tF(x,t)\}}{{|t|^{N-1}}}\right)<\lambda_1$, uniformly for $x\in \mathbb{R}^N$, where~~ 
~~~~~~~~~$$\lambda_1=\underset{u\in \mathbf{X}\setminus\{0\}}{\inf}\frac{\|u\|_{\mathbf{X}}^N}{~~~~~~~\|u\|_{L^N(\mathbb{R}^N)}^N},$$ where $\mathbf{X}$ is defined in \eqref{p3}.
\item \label{f3} There exists a compact set $Q\subset\mathbb{R}^N$ and $\nu>N, k_1, k_2>0$ such that
$$F(x,t)\geq k_1t^{\nu}-k_2~\text{for}~t\geq0~\text{and}~\forall~x\in Q.$$
\item \label{f4} There exists $\beta>N$ such that there holds 
$$0\leq\beta F(x,t)\leq\underline{f}(x,t)t~\text{for}~t\geq t_0~\text{and}~\forall~x\in \mathbb{R}^N.$$
   \item \label{f5}
   There exist $\gamma>N$ and $\mu>0$, such that there holds
$$ F(x,t)\geq {\mu(t^{\gamma}-t_0^{\gamma})}~\text{for}~t\geq t_0~\text{and}~\forall ~x\in \mathbb{R}^N.$$
\item \label{f6}
{Assume $f$ to be a measurable and locally bounded function such that there exists $\alpha_0>0,c_1,c_2>0$ such that 
$$ |\xi|\leq c_1|t|^{N-1}+c_2\left(\mathrm{exp}({\alpha_0|t|^\frac{N}{N-1}})-\sum_{j=0}^{N-2} \frac{\alpha_0^j|t|^\frac{Nj}{N-1}}{j!}\right), \ \forall ~ \xi \in \partial_tF(x,t) \text{ and } \forall x\in \mathbb{R}^N.$$
 }
 \end{enumerate}

{ \begin{example}[Example of an $f$ satisfying \ref{f1}--\ref{f6}] Here we provide an example which satisfies all assumption on $f$. We define:
  $$f(x,t)=H(t-1)|t|^{\frac{N}{N-1}}\left(\mathrm{exp}({|t|^\frac{N}{N-1}})-\sum_{j=0}^{N-2}\frac{|t|^\frac{Nj}{N-1}}{j!}\right),~\forall~(x,t)\in~\mathbb{R}^N\times\mathbb{R},$$
     where $H$ is the Heaviside function. The above $f$ satisfies all the hypotheses \ref{f1}-\ref{f6} with $t_0=1$, $\beta=\gamma=N+1$ and $Q \subset \mathbb{R}^N$ is compact.  \end{example}}
    
    The study of partial differential equations with discontinuous nonlinearity is rapidly gaining prominence, as many free boundary problems in mathematical physics can be effectively expressed in this framework. We refer the reader to Chang \cite{MR562547} for more details. Moreover, exploring equations like \eqref{main problem} is incredibly exciting due to their wide-ranging applications in biophysics, plasma physics, and the design of chemical reactions.
    \par A rich literature on problems with discontinuous nonlinearities is available now. We refer to \cite{chang1981variational, MR1874259, MR1971108,MR1001862,radulescu1993mountain, MR3542958} and the references therein. 
    To the best of the authors' knowledge, the first instance of this consideration was in \cite{MR2126276}, where the authors considered the stationary solution of the generalized reaction-diffusion equation involving the $(p,q)$-Laplace operator with the following equation 
    $$-\Delta_p u-\Delta_q u=\lambda c(x,u),~x\in \Omega,$$ where $\Omega$ is open bounded subset of $\mathbb{R}^N$, $\lambda \in \mathbb{R}$, $1<q<p<N$. Following this work, many authors have studied the $(p,q)$-Laplace with different assumptions on nonlinearity; notable among them are \cite{MR2381659, MR2509998}. { Yang and Perera \cite{MR3483063}, for the first time},  included a bordered line case and studied the following equations with critical Trudinger-Moser nonlinearity,
   \begin{align*} \begin{cases}
     -\Delta_p u-\Delta_N u&= \mu |u|^{p-2}u+\lambda |u|^{N-2}u\mathrm{exp}({|u|^\frac{N}{N-1}})~\text{in}~\Omega,\\
     u=&0~\text{on}~\partial\Omega,
    \end{cases}\end{align*}
    where $\Omega$ is bounded open subset of $\mathbb{R}^N$, $\mu\in \mathbb{R} $ and $\lambda>0$; the existence results were included for $N/2<p<N$ and suitable restrictions on the parameters $\mu$ and $\lambda$. Zhang and Yang \cite{MR4365176} studied the existence of a positive solution of the semipositone $(p,N)$-Laplace equation in a bounded domain. Fiscella and Pucci \cite{MR4258779} showed the existence of two non-negative solutions of the $(p,N)$-Laplace equation with critical exponential nonlinearities. However, in both articles, the authors assumed the nonlinearity to be continuous. Recently, Zouai and Benouhiba \cite{MR4587597} established the existence of non-trivial solutions to the following problem
    \begin{equation*}
    -\Delta_pu-\Delta_q u=f(u)\quad\quad \text{in}\quad \mathbb{R}^N,
    \end{equation*}
    where $1\leq p,q<N$ and the nonlinearity $f$ is discontinuous.
    \par Since for $1\leq p<N$, the Sobolev embedding $W^{1,p}(\mathbb{R}^N)\hookrightarrow L^{q}(\mathbb{R}^N)$, holds for all $ q\in[p,p^*]$, where $p*=\frac{Np}{N-p}~\text{if}~ p<N$ and $p*=+\infty$ if $p\geq N$. In such situations, to solve the problems, many authors have assumed that nonlinearity follows polynomial growth at infinity. But for $p=N$, the Sobolev embedding $W^{1,N}\hookrightarrow L^{q}(\mathbb{R}^N)$, holds for all $~q\in[N,+\infty)$ and it is well known that $W^{1,N}(\mathbb{R}^N)$ is not contained in $L^{\infty}(\mathbb{R}^N)$. In this case, every polynomial growth is allowed; do \'O and de Souza \cite{MR2838280} demonstrated that this maximal growth is of exponential type. Due to this, many authors study the $N$-Laplace equation in the sense of Trudinger-Moser inequality \cite{MR301500}. The literature presents various versions and generalizations of the Trudinger-Moser inequality; interested readers are referred to \cite{MR3145759, MR3042696}.
     \par After the literature survey, we found that till now, there is no article on the multivalued $(p,N)$-Laplace problem with discontinuous nonlinearity with exponential critical growth in $\mathbb{R}^N.$ However, Alves and Santos \cite{MR3542958} studied the problem similar to \eqref{main problem} for $p=N=2$. {We are interested in determining whether results similar to \cite{MR3542958} also hold in higher dimensions}. As for $p\neq 2$, there is no Hilbert structure on the solution space $\mathbf{X}$, so we must consider gradient convergence. {Despite this, we consider the $(p,N)$-Laplace operator, which introduces a double lack of compactness, and the operator is not homogeneous, making the calculations more complicated. Moreover, we employ an approach entirely different from that used in \cite{MR3542958} to verify the Palais-Smale condition.} 
     
    \par Since the nonlinearity is of exponential type, there is no best target $L^p$ space where the elements of Clarke's generalized gradient can belong. In 1932, Orlicz introduced a special function space, which helped to overcome the difficulty (and the space is named after him as Orlicz spaces). These spaces are a generalization of $L^p$ spaces. To study the existence of a solution of \eqref{main problem}, we will use the theory of Orlicz spaces. In this article, we consider the following $N$-function (see Definition \ref{def 2.5}) for establishing our arguments 
    {\begin{equation}\label{tm1}
    \Phi_1(t)=\mathrm{exp}({  |t|^\frac{N}{N-1}})-\sum_{j=0}^{N-2} \frac{ |t|^\frac{Nj}{N-1}}{j!}.
    \end{equation}}
    We define the weighted Sobolev space in the following manner:
    \begin{equation}\label{p1}
    W_{V}^{1,r}(\mathbb{R}^N):=\bigg\{u\in W^{1,r}(\mathbb{R}^N): \int_{\mathbb{R}^N} V(x)|u|^{r}\dx<+\infty \bigg\},
    \end{equation}
    for $r \in \{p,N\}$. These spaces make sense, thanks to assumption \ref{V1}. The norm associated with space is
    \begin{equation}\label{p2}
    \|u\|_{W_{V}^{1,r}}=\left(\int_{\mathbb{R}^N} \left(|\nabla u|^r+V(x)|u|^r \right)\dx\right)^\frac{1}{r}.
    \end{equation}
    Under the norm defined in $\eqref{p2}$, the space defined in $\eqref{p1}$ is a separable and uniformly convex Banach space \cite{MR3454625}.
    Our working space will be 
    \begin{equation}\label{p3}
    \mathbf{X}={W_{V}^{1,p}(\mathbb{R}^N})\cap{W_{V}^{1,N}(\mathbb{R}^N)}.
    \end{equation}
    The space in \eqref{p3} is equipped with norm
    \begin{equation*}
    \|u\|_{\mathbf{X}}=\|u\|_{W_{V}^{1,p}}+\|u\|_{W_{V}^{1,N}}.
    \end{equation*}
    It is easy to check that $(\mathbf{X},\|\cdot\|_{\mathbf{X}})$ is reflexive and separable Banach space. For additional information regarding the properties of \(\mathbf{X}\), we refer to \cite{MR4857527}. 
    
    The following are the main results of our paper regarding the existence and multiplicity of solutions (see Definition \ref{ws}).
    \begin{theorem}\label{main result 1}
    Assume {\rm \ref{V1}-\ref{V2}, {\rm \ref{f1}-\ref{f2}, \ref{f4},\ref{f6}} and \ref{g1}} hold. Then, there exist $\hat\epsilon>0$ {and $t_*>0$ such that}, the problem (\ref{main problem}) possesses a nonnegative solution $\ u_\epsilon\in \mathbf{X}$ with { $I_{\epsilon}(u_\epsilon)=c_{\epsilon}<-\delta^*<0$, for all $\epsilon \in (0,\hat\epsilon)$ and $t_0\in[0,t_*)$}. 
    \end{theorem}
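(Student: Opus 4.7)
I would carry the existence out via Ekeland's variational principle applied to the locally Lipschitz energy functional
\[ I_\epsilon(u)=\frac{1}{p}\|u\|_{W_V^{1,p}}^p+\frac{1}{N}\|u\|_{W_V^{1,N}}^N-\int_{\mathbb{R}^N}F(x,u)\,\dx-\epsilon\int_{\mathbb{R}^N}gu\,\dx,\qquad u\in\mathbf{X}. \]
The growth bound \ref{f6} combined with the Moser-Trudinger embedding of $\mathbf{X}$ into the Orlicz space $L_{\Phi_1}(\mathbb{R}^N)$ makes the nonlinear term well-defined and locally Lipschitz, so $I_\epsilon$ admits a Clarke generalized gradient $\partial I_\epsilon$ whose values at $u$ are obtained by measurable selection inside $\partial_tF(x,u(x))$. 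Thus any critical point in Clarke's sense is a weak solution of \eqref{main problem}. The plan is to pin a strict local minimum of $I_\epsilon$ inside a small ball $\bar B_\rho\subset\mathbf{X}$ at a negative level and then recover the minimizer by means of a non-smooth Palais-Smale sequence.

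\textbf{Local geometry and negative infimum.} First I would fix $\rho>0$ small and check that $I_\epsilon$ is bounded below on $\bar B_\rho$. Hypothesis \ref{f2} provides $\eta,\delta>0$ with $|\xi|\leq\frac{\lambda_1-\eta}{N}|t|^{N-1}$ for $|t|\leq\delta$ and $\xi\in\partial_tF(x,t)$, which combined with \ref{f6} yields $F(x,t)\leq\tfrac{\lambda_1-\eta}{N}|t|^N+C|t|\,\Phi_1(\alpha_0|t|^{N/(N-1)})$. Integrating and using the definition of $\lambda_1$ together with the Moser-Trudinger inequality (the exponential part is $o(\|u\|_{\mathbf{X}}^N)$ as $\rho\to 0$) gives
\[ I_\epsilon(u)\geq c_1\|u\|_{W_V^{1,p}}^p+c_2\|u\|_{W_V^{1,N}}^N-\epsilon\|g\|_{\mathbf{X}^*}\|u\|_{\mathbf{X}}\quad\text{on }\bar B_\rho. \]
To force $c_\epsilon:=\inf_{\bar B_\rho}I_\epsilon<-\delta^*<0$, I pick a nonnegative $\phi\in C_c^\infty(\mathbb{R}^N)$ with $\int g\phi\,\dx>0$ and compute $I_\epsilon(s\phi)$ for small $s>0$; since by \ref{f1} the $F$-contribution vanishes as soon as $s\|\phi\|_\infty\leq t_0$ (and is otherwise controlled by $C(s\phi)^N$ near $0$ via \ref{f2} and \ref{f6}), one obtains $I_\epsilon(s\phi)\leq C_1 s^p+C_2 s^N-\epsilon s\int g\phi\,\dx$, whose minimum over $s$ is attained at $s^*=O(\epsilon^{1/(p-1)})$ with a strictly negative value. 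Choosing $\hat\epsilon$ and $t_*$ so that $s^*\phi\in\bar B_\rho$ and $s^*\|\phi\|_\infty\leq t_0<t_*$ delivers $c_\epsilon\leq-\delta^*<0$; this also ensures $c_\epsilon<\inf_{\partial B_\rho}I_\epsilon$, which is nonnegative for $\rho$ small, so the minimum cannot concentrate at the boundary.

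\textbf{Compactness, limit, and nonnegativity.} Ekeland's variational principle on $\bar B_\rho$ produces a sequence $(u_n)\subset\bar B_\rho$ with $I_\epsilon(u_n)\to c_\epsilon$ and $\mathrm{dist}(0,\partial I_\epsilon(u_n))\to 0$; by the strict inequality $c_\epsilon<\inf_{\partial B_\rho}I_\epsilon$ the iterates fall in the interior for $n$ large, so this is a genuine non-smooth Palais-Smale sequence at level $c_\epsilon$. Boundedness is automatic and we may extract $u_n\rightharpoonup u$ in $\mathbf{X}$, $u_n\to u$ in $L^{q}_{\mathrm{loc}}(\mathbb{R}^N)$ and a.e. The main technical obstacle is upgrading this to strong convergence in $\mathbf{X}$: I would test the approximate Clarke inclusion with $u_n-u$, exploit the $(S_+)$ property of $-\Delta_p-\Delta_N$, and use the Moser-Trudinger estimate to obtain uniform higher integrability of the Clarke elements $\xi_n\in\partial_tF(x,u_n)$ (hypothesis \ref{f4} provides the Ambrosetti-Rabinowitz inequality that tames $\int\xi_n u_n\,\dx$); because $\|u_n\|_\mathbf{X}\leq\rho$ is kept strictly below the Moser-Trudinger threshold, Lions-type concentration cannot occur. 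A measurable selection argument then extracts $\xi\in\partial_tF(x,u)$ closing the limiting inclusion, so $u$ is a weak solution of \eqref{main problem} with $I_\epsilon(u)=c_\epsilon\leq-\delta^*<0$. Finally, testing the inclusion with $-u^-\in\mathbf{X}$: on $\{u<0\}\subseteq\{u<t_0\}$ assumption \ref{f1} forces $\xi=0$, while \ref{g1} gives $\int g(-u^-)\,\dx\leq 0$, leaving $\|u^-\|_{W_V^{1,p}}^p+\|u^-\|_{W_V^{1,N}}^N\leq 0$, hence $u\geq 0$.
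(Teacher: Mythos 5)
Your overall strategy — Ekeland's principle on a small closed ball $\bar B_\rho$, a negative infimum produced by the linear perturbation $\epsilon\int gu\,\dx$, Moser--Trudinger control of the Clarke elements, and truncation with $-u^-$ for nonnegativity — matches the paper's, and your substitution of an $(S_+)$-type argument for the paper's a.e.\ gradient-convergence lemma is a legitimate variant (though you should note that it requires $\rho$, hence $\|u_n\|_{W^{1,N}}^{N/(N-1)}$, to be taken below $\alpha_N/\alpha_0$ explicitly; the paper instead extracts this bound from the Ambrosetti--Rabinowitz condition \ref{f4} and the smallness $\hat\epsilon\le\tilde\epsilon$, which is why \ref{f4} appears among the hypotheses).

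There is, however, a genuine gap: you never verify condition \ref{solution 3} of the paper's definition of weak solution, namely $m([u_\epsilon>t_0])>0$, and you misidentify what the threshold $t_*$ is for. In your write-up $t_*$ appears only in the chain ``$s^*\|\phi\|_\infty\leq t_0<t_*$'' used to kill the $F$-term on the test function; that condition is not needed (for $u\ge 0$ one simply has $F(x,u)\ge 0$ by \ref{f1}, so $-\int F\le 0$), it points in the wrong direction (it bounds $t_0$ from \emph{below}, which is incompatible with $t_0=0$ being allowed), and it leaves the restriction $t_0\in[0,t_*)$ in the theorem unexplained. The actual role of $t_*$ is this: if $m([u_\epsilon>t_0])=0$ then $u_\epsilon<t_0$ a.e., so by \ref{f1} one has $\partial_tF(x,u_\epsilon)=\{0\}$ and hence $\rho_0\equiv 0$; testing the limiting equation with $u_\epsilon$ gives $\epsilon\int gu_\epsilon\,\dx\le\|u_\epsilon\|_{W_V^{1,p}}^p+\|u_\epsilon\|_{W_V^{1,N}}^N$, and combining this with $I_\epsilon(u_\epsilon)=c_\epsilon<-\delta^*$ yields
\begin{equation*}
-\delta^*\ \ge\ -\epsilon\,\frac{N-1}{N}\int_{\mathbb{R}^N}gu_\epsilon\,\dx\ \ge\ -\epsilon\,\frac{N-1}{N}\,t_0\int_{\mathbb{R}^N}g\,\dx,
\end{equation*}
forcing $t_0\ge t_*:=\frac{N\delta^*}{(N-1)\epsilon\int_{\mathbb{R}^N}g\,\dx}$ — a contradiction when $t_0<t_*$. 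Without this step your candidate could a priori be the degenerate state with $\rho_0\equiv 0$, which the paper's notion of solution excludes; you should add this contradiction argument (and with it the correct definition of $t_*$) to close the proof.
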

      \begin{theorem}\label{main result 2}
      Assume {\rm \ref{V1}-\ref{V2}, \ref{f1}-\ref{f6}, and \ref{g1}} hold. Then, there exists $\bar\epsilon>0$ {and $t_1>0$ such that} problem (\ref{main problem}) have a nonnegative solution $v_\epsilon\in \mathbf{X}$ such that $I_\epsilon(v_\epsilon)=b_\epsilon>0$, for all $\epsilon\in(0,\bar\epsilon)$ and $t_0 \in [0,\bar{t}_1)$  where $\bar{t}_1=\min\{t_1,t_*\}, t_*$ is defined in Theorem \ref{main result 1}.
      \end{theorem}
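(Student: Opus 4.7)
The plan is to apply a non-smooth mountain pass theorem à la Chang for locally Lipschitz functionals to the energy functional $I_\epsilon$ associated with \eqref{main problem}. The first task is to verify the mountain pass geometry for $I_\epsilon$ around $0$: (i) $I_\epsilon(0)=0$; (ii) there exist $\rho,b_0>0$ with $\inf_{\|u\|_{\mathbf{X}}=\rho} I_\epsilon(u)\geq b_0>0$; and (iii) some $e\in\mathbf{X}$ satisfies $\|e\|_{\mathbf{X}}>\rho$ and $I_\epsilon(e)<0$. For (ii), I would use \ref{f2} to dominate $F(x,t)$ by $(\lambda/N)|t|^N$ with $\lambda<\lambda_1$ for $|t|$ small, combine this with \ref{f6} and the Trudinger-Moser inequality applied to the $N$-function $\Phi_1$ from \eqref{tm1} to handle the exponential tail, and invoke the Rayleigh characterisation of $\lambda_1$; the linear perturbation $-\epsilon\int_{\mathbb{R}^N} g u\,\dx$ is absorbed by taking $\epsilon$ small using \ref{g1} and $g\in\mathbf{X}^*$. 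The constraint $t_0\in[0,t_1)$ enters because the pointwise bound on $F$ near zero must not be spoiled by the cutoff threshold $t_0$ coming from \ref{f1}. For (iii), pick a test function $\phi\in C_c^\infty(Q)$ with $\phi\geq t_0$ on a set of positive measure and apply \ref{f5} with $\gamma>N$ to force $I_\epsilon(s\phi)\to-\infty$ as $s\to\infty$.

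Next, to avoid concentration at the critical Trudinger-Moser threshold, I would estimate the mountain pass level
\begin{equation*}
b_\epsilon=\inf_{\gamma\in\Gamma}\max_{t\in[0,1]} I_\epsilon(\gamma(t)),\qquad \Gamma=\{\gamma\in C([0,1],\mathbf{X}) : \gamma(0)=0,\ \gamma(1)=e\},
\end{equation*}
from above by a strictly subcritical value $\kappa<\tfrac{1}{N}(\alpha_N/\alpha_0)^{N-1}$, where $\alpha_N$ denotes the sharp Moser-Trudinger constant in dimension $N$. Here \ref{f3} is decisive: inserting a standard Moser-type concentrating sequence supported in $Q$ into the path and exploiting the polynomial lower bound $F\geq k_1 t^\nu - k_2$ with $\nu>N$ yields the desired upper estimate, provided $\epsilon$ and $t_0$ are sufficiently small. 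Applying the non-smooth mountain pass theorem then produces a Palais-Smale sequence $\{u_n\}\subset\mathbf{X}$ satisfying $I_\epsilon(u_n)\to b_\epsilon$ and $\min_{\zeta\in\partial I_\epsilon(u_n)}\|\zeta\|_{\mathbf{X}^*}\to 0$, where $\partial$ denotes Clarke's generalized gradient.

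The final step is compactness. Boundedness of $\{u_n\}$ comes from \ref{f4}: testing a selection $\zeta_n\in\partial I_\epsilon(u_n)$ against $u_n$ and forming $I_\epsilon(u_n)-\beta^{-1}\langle\zeta_n,u_n\rangle$ with $\beta>N$ controls the $p$- and $N$-parts of $\|u_n\|_{\mathbf{X}}$. By reflexivity and the compact embedding of $\mathbf{X}$ into weighted $L^q$ spaces guaranteed by \ref{V2}, I extract a weakly convergent subsequence $u_n\rightharpoonup v_\epsilon$ with pointwise a.e.\ convergence. The subcritical bound on $b_\epsilon$ rules out the Trudinger-Moser concentration phenomenon, yielding a.e.\ convergence of the gradients, and passing to the limit in the subdifferential inclusion (using closedness of $\partial_t F$ under a.e.\ convergence combined with the growth bound \ref{f6}) shows that $v_\epsilon$ is a weak solution with $I_\epsilon(v_\epsilon)=b_\epsilon>0$, so $v_\epsilon\not\equiv 0$. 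Nonnegativity of $v_\epsilon$ follows from \ref{f1}: since $f(x,t)=0$ for $t<t_0$, testing the inclusion against $-v_\epsilon^-$ annihilates the nonlinearity and forces $v_\epsilon^-\equiv 0$.

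The principal obstacle is this compactness step: the critical exponential growth, together with the double lack of compactness (from the unbounded domain $\mathbb{R}^N$ and from the non-homogeneous operator $-\Delta_p-\Delta_N$), makes both the subcritical mountain pass estimate and the strong gradient convergence delicate, and the multivaluedness of $\partial_t F$ means one must carefully track selections when passing to the limit. The thresholds $t_1$ and $\bar{t}_1=\min\{t_1,t_*\}$ are calibrated precisely so that the geometric step (ii) and the quantitative level bound $b_\epsilon<\tfrac{1}{N}(\alpha_N/\alpha_0)^{N-1}$ both hold for every admissible $\epsilon$.
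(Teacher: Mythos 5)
Your overall architecture (mountain pass geometry, a strictly subcritical level estimate, boundedness via \ref{f4}, exclusion of Trudinger--Moser concentration, passage to the limit in the inclusion) matches the paper's. But the step you call decisive --- deriving the level bound from \ref{f3} by inserting a Moser-type concentrating sequence --- is a genuine gap. A polynomial lower bound $F(x,t)\geq k_1t^{\nu}-k_2$ with \emph{fixed} constants cannot push the mountain pass level strictly below the critical threshold via Moser functions: along a normalized concentrating sequence $m_n$ the contribution $\int F(x,tm_n)\,\dx$ from a polynomial term of degree $\nu$ vanishes as $n\to\infty$, so $\max_t I_\epsilon(tm_n)$ approaches the threshold from below but does not drop strictly under it. In the critical exponential literature this step requires an asymptotic hypothesis on $tf(x,t)e^{-\alpha_0 t^{N/(N-1)}}$ (or a large coefficient), which is not available here. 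The paper instead obtains the level estimate from \ref{f5}: it takes the path $t\mapsto t\psi_0$ along a fixed test function and shows $\sup_{t}I_\epsilon(t\psi_0)\leq D_2\,\mu^{-p/(\gamma-p)}+\mu t_0^{\gamma}m(\mathrm{supp}\,\psi_0)$, which is small only because the coefficient $\mu$ in \ref{f5} is required to exceed an explicit $\mu^*$ depending on $\psi_0$; the threshold $t_1$ on $t_0$ is then chosen to keep the additive term $\mu t_0^\gamma m(\mathrm{supp}\,\psi_0)$ harmless. Your proposal uses \ref{f5} only tangentially and never invokes this largeness of $\mu$, so the inequality $b_\epsilon<\kappa$ is not established.

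A secondary calibration issue: the threshold you aim for, $\tfrac{1}{N}(\alpha_N/\alpha_0)^{N-1}$, is not the right one. The Palais--Smale boundedness argument via \ref{f4} only yields $\bigl(\tfrac1N-\tfrac1\beta\bigr)\min\{1,V_0\}\,\|v_n\|_{W^{1,N}}^{N}\leq b_\epsilon+\epsilon\|g\|_{*}M\tfrac{\beta-1}{\beta}+o_n(1)$, so to conclude $\limsup_n\|v_n\|_{W^{1,N}}^{N/(N-1)}<\alpha_N/\alpha_0$ (which is what Lemma \ref{2.1} needs) one must have $b_\epsilon<\bigl(\tfrac1N-\tfrac1\beta\bigr)\min\{1,V_0\}(\alpha_N/\alpha_0)^{N-1}-\epsilon\|g\|_{*}M\tfrac{\beta-1}{\beta}$, a strictly smaller quantity. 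The remaining steps of your proposal (boundedness, a.e.\ gradient convergence, limit in the subdifferential, nonnegativity via testing with $-v_\epsilon^-$) do track the paper's argument, modulo the fact that the paper passes through strong convergence $v_n\to v_\epsilon$ in $\mathbf{X}$ before applying the closedness of the generalized gradient.
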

       \begin{theorem}\label{main result 3}
       Under the assumptions {\rm \ref{V1}-\ref{V2}, \ref{f1}-\ref{f6}, and \ref{g1}}, the problem (\ref{main problem}) have two nonnegative solutions
       $u_\epsilon$ and $v_\epsilon$ with $I_{\epsilon}(u_\epsilon)=c_{\epsilon}<0<b_\epsilon=I_\epsilon(v_\epsilon)$, for all $\epsilon \in (0,\hat{\hat{\epsilon}})$ and $t_0\in[0,\bar{t}_1)$ where $\bar{t}_1=\min\{t_1,t_*\}$, where $t_*$ is defined in Theorem \ref{main result 1} and $\hat{\hat{\epsilon}}=\min\{\bar{\epsilon},\hat{\epsilon}\}. $
       \end{theorem}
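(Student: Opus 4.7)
The plan is to deduce Theorem \ref{main result 3} as a direct combination of Theorems \ref{main result 1} and \ref{main result 2}, since the hard analytic work (compactness, Moser--Trudinger control of the exponential nonlinearity, minimization below zero, and mountain-pass geometry) has already been carried out in those two results. The key observation is that the existence thresholds $\hat\epsilon$ and $\bar\epsilon$ on the parameter $\epsilon$ and the thresholds $t_*$ and $t_1$ on $t_0$ are obtained independently in the two theorems, so we only need to work on their common range.

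First, I would fix $\epsilon\in (0,\hat{\hat\epsilon})$ with $\hat{\hat\epsilon}=\min\{\hat\epsilon,\bar\epsilon\}$ and $t_0\in[0,\bar{t}_1)$ with $\bar{t}_1=\min\{t_1,t_*\}$; this choice lies in the admissible range of both Theorem \ref{main result 1} and Theorem \ref{main result 2}. Applying Theorem \ref{main result 1}, I obtain a nonnegative solution $u_\epsilon\in\mathbf{X}$ of problem \eqref{main problem} with
\[
I_\epsilon(u_\epsilon)=c_\epsilon<-\delta^*<0.
\]
Applying Theorem \ref{main result 2} to the same $\epsilon$ and $t_0$, I obtain a second nonnegative solution $v_\epsilon\in\mathbf{X}$ of \eqref{main problem} with
\[
I_\epsilon(v_\epsilon)=b_\epsilon>0.
\]

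To conclude, I would verify that $u_\epsilon$ and $v_\epsilon$ are genuinely distinct. This is immediate from the strict energy separation
\[
I_\epsilon(u_\epsilon)=c_\epsilon<-\delta^*<0<b_\epsilon=I_\epsilon(v_\epsilon),
\]
which forces $u_\epsilon\neq v_\epsilon$. Since both solutions are nonnegative by construction, the statement of Theorem \ref{main result 3} follows. There is no real obstacle here beyond bookkeeping of the parameter thresholds; the entire analytical content — verifying the nonsmooth Palais--Smale condition in the Orlicz framework, handling the double lack of compactness of the $(p,N)$-operator, and locating critical values of the locally Lipschitz functional $I_\epsilon$ on both sides of zero — has already been absorbed into Theorems \ref{main result 1} and \ref{main result 2}.
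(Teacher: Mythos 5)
Your proposal is correct and matches the paper's own proof, which likewise deduces Theorem \ref{main result 3} directly by combining Theorems \ref{main result 1} and \ref{main result 2} on the common parameter range and distinguishing the two solutions by the strict separation of their energy levels. No further comment is needed.
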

       This paper is structured as follows. Section \ref{section2} contains some basic results related to non-smooth critical point theory, a brief introduction to Orlicz spaces, Moser Trudinger type inequality for $\mathbb{R}^N$, and some embedding results. In section \ref{section 3}, we established some properties of the functional $\Upsilon$ defined in Theorem \ref{Theorem 3.1}. In sections \ref{section 4} and \ref{section5}, we obtained the nonnegative solution of the problem (\ref{main problem}) via the Ekeland Variational Principle and the Mountain Pass Theorem for non-differentiable functions, respectively.
       
       \textbf{Notations}. Throughout the paper, we will use the following notations
       \begin{itemize}
       \item ${X}\hookrightarrow Y$ denote the continuous embedding of $X$ into $Y$.
       \item ${X}\hookrightarrow\hookrightarrow Y$ denote compact embedding of ${X}$ into $Y$.
       \item $o_n(1)$ denote $o_n(1)\rightarrow0$ as $n\rightarrow+\infty$.
       \item $C_1,C_2,C_3, \cdots$ all are positive constants and may have different values at different places.
       \item $\rightharpoonup$ denotes weak convergence, $\overset{\ast}{\rightharpoonup}$ denotes weak$^{\star}$ convergence and  $\rightarrow $ denotes strong convergence.
       \item $\alpha_N=N\omega_{N-1}^\frac{1}{N-1}$, where $\omega_{N-1}$ is volume of $N-1$ dimensional unit sphere.
       \item {$m(E)$ denotes the $N$-dimensional Lebesgue measure of $E \subset \mathbb{R}^N$.
       \item $[ u > d]:= \{x \in \mathbb{R}^N: u(x) > d\}.$}
       \item $\partial f(x)$ denote Clark generalized gradient of function $f$ at $x$.
        \item $\|u\|_{L^{p}}=\left(\int_{\mathbb{R}^N}|u|^p \dx\right)^\frac{1}{p},\ 1<p<\infty.$
        \item {$\|\cdot\|_*$ denotes the norm of $g$ in the dual space of $\mathbf{X}.$}
       \item 
       $\mathrm{L}^{\tilde{\Theta}}(\mathbb{R}^N)$ denotes the Orlicz space corresponding to complementary function $\tilde{\Theta}$ associated with the $N$-function $\Theta.$
       \end{itemize}
           \section{Preliminaries}\label{section2}
    We recall some basic definitions and results, which serve as fundamental tools to establish our arguments.
    \subsection{Non smooth critical point theory}\label{s3}
    Let $({X},\|\cdot\|)$ be a real Banach space. Then, a functional $J:{X} \rightarrow \mathbb{R}$ is said to be locally Lipschitz if for every $x\in X$ there exists an open set $U$ containing $x$ such that $J|_U$ is a Lipschitz functional. We denote this class of (real-valued) functions by $Lip_{loc}(X,\mathbb{R}).$
    \begin{definition}[Generalized Directional Derivative]
    The generalized directional derivative of $J$ at $x$ in the direction $h$ is given by
     $$J^{o}(x;h)=\limsup_{y\rightarrow x,\lambda\rightarrow 0}\frac {J (y+\lambda h)-J(y)}{\lambda}.$$ \end{definition}
     The function $h\mapsto J^{0}(x,h)$ is a subadditive, continuous, and convex. 
     \begin{definition}[Generalized Gradient]\label{def2.2} A generalized gradient in the Clark sense is given by 
      $$\partial J(x)=\{x^*\in X^*: \langle x^*,h \rangle_X \leq J^{0}(x;h),\ \forall~ h \in X\}.$$ \end{definition}
     For each $x\in X$, $\partial J(x)$ is non empty, convex and weak*-compact subset of $X^*.$ Moreover, the function $$\lambda(x)=\underset{x^*\in \partial J(x)}{\min}\|x^*\|_{X^*}$$ exists and is lower semi-continuous. 
     \begin{remark} If $J\in C^{1}(X,\mathbb{R})$, then $\partial J(x)=\{J'(x)\}.$ \end{remark}
     \begin{definition}[Critical Point] A point $x_0\in X$ is called a critical point for $J$ if $0\in \partial J(x).$
     \end{definition}For more details on non-smooth critical point theory, one may refer to \cite{chang1981variational,MR367131,MR709590,do2013introduction}.
     {\begin{theorem}[ Mountain Pass Theorem for Non-differentiable
Functions {\cite[Theorem 1]{radulescu1993mountain}}]\label{thm 2.1}
          Let $X$ be a real Banach space, and $J\in Lip_{loc}(X,\mathbb{R})$ with $J(0)=0$. Assume that,
there exist $e \in X$ and $\rho, r > 0$ such that 
\begin{itemize}
    \item[(i)] $J(x)\geq \rho$ for all $x\in S_r=\{x\in X :\|x\|=r\}$,
    \item[(ii)] $J(e)< 0$ with $\|e\|>r$.    
\end{itemize}
Also, we set $$c=\underset{\gamma\in \Gamma}{\inf}\underset{t\in [0,1]}{\max}J(\gamma(t)) \text{ and } \Gamma=\{\gamma \in C([0,1],X):\gamma(0)=0,~\gamma(1)=e\}.$$
Then, $c\geq \rho$ and there exists a sequence $\{x_n\}\subset X$ which verifies $J(x_n)\rightarrow c~\text{and} ~\lambda(x_n)\rightarrow 0,$
where $\lambda(x)= \displaystyle\min_{x^*\in\partial J(x)} \|x^* \|_{X^*}$.
     \end{theorem}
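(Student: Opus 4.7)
The plan is to follow the classical minmax argument of Ambrosetti--Rabinowitz, adapted to the locally Lipschitz setting via the Clarke subdifferential. I would equip the path space $\Gamma$ with the uniform metric $d(\gamma_1,\gamma_2) = \max_{t \in [0,1]} \|\gamma_1(t) - \gamma_2(t)\|_X$, making $(\Gamma, d)$ a complete metric space, and introduce the functional $\psi:\Gamma \to \mathbb{R}$ by $\psi(\gamma) = \max_{t \in [0,1]} J(\gamma(t))$, which is continuous. Using $\gamma(0)=0$ and $\|\gamma(1)\| = \|e\| > r$ together with continuity of $t \mapsto \|\gamma(t)\|$, every admissible path crosses the sphere $S_r$, so hypothesis (i) yields $\psi(\gamma) \geq \rho$ and hence $c \geq \rho$, settling the first conclusion.

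For the Palais--Smale-type sequence, for each $n$ I would pick $\gamma_n^0 \in \Gamma$ with $\psi(\gamma_n^0) \leq c + 1/n^2$ and apply Ekeland's variational principle on $(\Gamma, d)$ to produce $\gamma_n \in \Gamma$ satisfying $\psi(\gamma_n) \leq c + 1/n^2$, $d(\gamma_n, \gamma_n^0) \leq 1/n$, and
$$\psi(\gamma) \geq \psi(\gamma_n) - \frac{1}{n}\, d(\gamma, \gamma_n) \quad \text{for all } \gamma \in \Gamma.$$
Define the (nonempty, compact) peak set $K_n = \{t \in [0,1] : J(\gamma_n(t)) = \psi(\gamma_n)\}$; since $\psi(\gamma_n) \geq \rho > 0 = J(\gamma(0)) = J(\gamma(1))$, no $t \in K_n$ lies at the endpoints. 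The crux of the proof is to exhibit $t_n \in K_n$ with $\lambda(\gamma_n(t_n)) \leq 1/n$; the desired sequence is then $x_n := \gamma_n(t_n)$.

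To produce such $t_n$, I would argue by contradiction: if $\lambda(\gamma_n(t)) > 1/n$ for every $t \in K_n$, then lower semicontinuity of $\lambda$ and upper semicontinuity of $\partial J$ extend this bound to an open neighborhood $U$ of the compact set $\gamma_n(K_n)$. The pseudo-gradient construction for locally Lipschitz functionals (Chang's original device) then supplies a locally Lipschitz vector field $W:U \to X$ with $\|W(x)\| \leq 1$ and $J^{\circ}(x;-W(x)) \leq -\tfrac{1}{2n}$ on $U$. Using a cutoff $\sigma \in C([0,1],[0,1])$ with $\sigma = 1$ on a small neighborhood of $K_n$ and $\sigma = 0$ at the endpoints, I would perturb $\gamma_n$ to
$$\gamma_\varepsilon(t) := \gamma_n(t) - \varepsilon\, \sigma(t)\, W(\gamma_n(t)),$$
which lies in $\Gamma$. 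A direct computation using the definition of $J^{\circ}$ shows that $J(\gamma_\varepsilon(t)) < J(\gamma_n(t))$ for $t$ near $K_n$ and that away from $K_n$ the values of $J\circ \gamma_n$ are already strictly smaller than $\psi(\gamma_n)$, so $\psi(\gamma_\varepsilon) \leq \psi(\gamma_n) - \varepsilon/(3n)$ for small $\varepsilon$, while $d(\gamma_\varepsilon,\gamma_n) \leq \varepsilon$. This contradicts the Ekeland inequality.

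The main obstacle is precisely this deformation step: in contrast to the $C^1$ case one cannot integrate a gradient flow directly, and one must (i) construct the pseudo-gradient field $W$ compatible with the Clarke subdifferential, and (ii) control the motion of the maximizer set $K_n$ under the perturbation, since $K_n$ may fail to be a singleton and the location of the maximum can shift. The compactness of $K_n$, upper semicontinuity of $\partial J$, and a uniform continuity argument on a tubular neighborhood of $\gamma_n([0,1])$ are what allow the local decrease estimate to be combined with a global path-level estimate on $\psi$. Once this is in place, $J(x_n) \to c$ is immediate from $\psi(\gamma_n) \to c$, and $\lambda(x_n) \to 0$ is exactly what the choice of $t_n$ delivers.
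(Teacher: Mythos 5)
The paper does not prove this theorem; it is quoted verbatim from the cited reference of R\u{a}dulescu, so there is no internal proof to compare against. Your strategy --- Ekeland's variational principle applied to $\psi(\gamma)=\max_{t}J(\gamma(t))$ on the complete metric space $(\Gamma,d)$, the intermediate-value argument for $c\geq\rho$, and a one-step nonsmooth deformation supported near the peak set $K_n$ --- is the standard and correct architecture for producing a Palais--Smale sequence without assuming compactness, and it is essentially the route of the cited source (which, in place of an explicit pseudo-gradient field, closes the argument with a minimax/duality step over the weak$^*$-compact convex sets $\partial J(\gamma_n(t))$, $t\in K_n$; both variants are legitimate).

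There is, however, one step that fails as written: the final ``contradiction.'' Your Ekeland inequality gives $\psi(\gamma_\varepsilon)\geq\psi(\gamma_n)-\tfrac{1}{n}\,d(\gamma_\varepsilon,\gamma_n)\geq\psi(\gamma_n)-\tfrac{\varepsilon}{n}$, while your deformation only yields $\psi(\gamma_\varepsilon)\leq\psi(\gamma_n)-\tfrac{\varepsilon}{3n}$. Since $\tfrac{\varepsilon}{3n}<\tfrac{\varepsilon}{n}$, these two bounds are perfectly compatible and nothing is contradicted. The loss occurs because you weakened the descent rate from $\lambda(x)>\tfrac1n$ to $J^{\circ}(x;-W(x))\leq-\tfrac{1}{2n}$ and then to a decrease of $\tfrac{\varepsilon}{3n}$; to force a contradiction the guaranteed rate of decrease must strictly exceed the Ekeland slope $\tfrac1n$. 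The repair is routine: by the minimax identity $\lambda(x)=\max_{\|h\|\leq1}\bigl(-J^{\circ}(x;-h)\bigr)$ one can choose the descent direction with $J^{\circ}(x;-h)\leq-\lambda(x)$, and lower semicontinuity of $\lambda$ together with compactness of $K_n$ upgrades the pointwise hypothesis $\lambda>\tfrac1n$ on $K_n$ to $\lambda\geq\tfrac1n+\delta$ there, so the partition-of-unity field can be built with $J^{\circ}(y;-W(y))\leq-(\tfrac1n+\tfrac{\delta}{2})$ on a neighbourhood of $\gamma_n(K_n)$, giving $\psi(\gamma_\varepsilon)\leq\psi(\gamma_n)-\varepsilon(\tfrac1n+\tfrac{\delta}{4})$ and a genuine contradiction (alternatively, settle for $t_n$ with $\lambda(\gamma_n(t_n))\leq\tfrac{3}{n}$, which still yields $\lambda(x_n)\to0$). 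Two smaller slips: $J(\gamma(1))=J(e)<0$, not $0$ (the conclusion that $K_n$ avoids the endpoints survives, since $\psi(\gamma_n)\geq\rho>0>J(e)$); and the cutoff $\sigma$ must be supported inside $\gamma_n^{-1}(U)$, not merely vanish at $t=0,1$, for $W(\gamma_n(t))$ to be defined wherever $\sigma(t)>0$.
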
}
\begin{theorem}[Ekeland Variational Principle--Weak Form {\cite[Theorem 4.1]{MR1019559}}]
 Let $(X,d)$ be a complete metric space and $J:X\rightarrow\mathbb{R}$ be a lower semi-continuous function. Assume that $J$ is bounded below and set $c_1=\underset{x\in X}{\inf}J(x)$.  Then for every $\epsilon>0$, there exist $u_{\epsilon}\in X$ such that 
    $$c_1\leq J(u_{\epsilon})\leq c_1+\epsilon,$$
    $$J(x)-J(u_{\epsilon})+\epsilon d(x,u_\epsilon)>0,$$
    for every $x\in X$, $x\neq u_{\epsilon}$.  
\end{theorem}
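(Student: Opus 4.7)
The plan is to prove the statement by introducing a partial order on $X$ tailored to the perturbation $\epsilon d$ and constructing an iterative minimizing sequence whose limit will be $u_\epsilon$. Specifically, I would define the relation $y \preceq x$ if and only if $J(y) + \epsilon d(x,y) \leq J(x)$. A short verification shows this is a genuine partial order: reflexivity is trivial; antisymmetry follows because $y \preceq x$ and $x \preceq y$ added together force $2\epsilon d(x,y) \leq 0$, hence $x=y$; and transitivity follows by combining the defining inequalities and invoking the triangle inequality for $d$. The goal then becomes producing an element that is minimal (in the sense that nothing other than itself lies below it) and is close in $J$-value to $c_1$.

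The construction proceeds by recursion. Starting from any $u_0 \in X$ with $J(u_0) \leq c_1 + \epsilon$, which exists by the definition of $c_1$, I would set $S_n = \{x \in X : x \preceq u_n\}$ and pick $u_{n+1} \in S_n$ satisfying $J(u_{n+1}) \leq \inf_{S_n} J + 2^{-(n+1)}$. From $u_{n+1} \preceq u_n$ one reads off $\epsilon d(u_n, u_{n+1}) \leq J(u_n) - J(u_{n+1})$; since $J(u_n)$ is nonincreasing and bounded below by $c_1$, telescoping gives $\sum_n d(u_n, u_{n+1}) < \infty$, so $\{u_n\}$ is Cauchy. Completeness of $X$ produces a limit $u_\epsilon$, and lower semicontinuity yields $c_1 \leq J(u_\epsilon) \leq \liminf_n J(u_n) \leq J(u_0) \leq c_1 + \epsilon$, handling the first conclusion.

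For the strict variational inequality, I would argue by contradiction. Suppose some $x \neq u_\epsilon$ satisfies $J(x) - J(u_\epsilon) + \epsilon d(x, u_\epsilon) \leq 0$, i.e. $x \preceq u_\epsilon$. Passing to the limit in $x \preceq u_n$ using lower semicontinuity of $J$ and continuity of $d(\cdot, u_n)$ first shows $u_\epsilon \preceq u_n$ for every $n$, whence by transitivity $x \preceq u_n$ for every $n$, i.e. $x \in S_n$. Then from $u_{n+1} \in S_n$ and the selection property, one gets $J(x) \geq \inf_{S_n} J \geq J(u_{n+1}) - 2^{-(n+1)}$; combined with $x \preceq u_{n+1}$ this gives $\epsilon d(x, u_{n+1}) \leq J(u_{n+1}) - J(x) \leq 2^{-(n+1)}$. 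Letting $n \to \infty$ forces $x = u_\epsilon$, a contradiction.

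The main obstacle in this plan is the bookkeeping in the final contradiction step: one has to chain the order relations carefully and use the fact that the selection error $2^{-(n+1)}$ governs both the gap to the infimum of $J$ on $S_n$ and the distance from $x$ to $u_{n+1}$. A secondary subtlety is verifying $u_\epsilon \preceq u_n$ after taking limits, which relies crucially on the lower semicontinuity hypothesis on $J$ rather than continuity. Beyond these points, all remaining steps are standard completeness and monotonicity arguments.
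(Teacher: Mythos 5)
The paper does not prove this statement at all: it is quoted as a known classical result with a citation to de Figueiredo's lectures on the Ekeland variational principle, so there is no in-paper argument to compare against. Your proposal is the standard and correct proof of that cited result, via the Bishop--Phelps-type partial order $y \preceq x \iff J(y) + \epsilon d(x,y) \leq J(x)$, the recursive almost-minimizing selection on the sets $S_n$, completeness to obtain the limit $u_\epsilon$, and the contradiction argument showing $u_\epsilon$ is $\preceq$-minimal. All the steps check out; the only slip is purely verbal: to obtain $u_\epsilon \preceq u_n$ you pass to the limit as $m\to\infty$ in the relations $u_m \preceq u_n$ (using lower semicontinuity of $J$ and continuity of $d(\cdot,u_n)$), not "in $x \preceq u_n$" as written; the subsequent chaining $x \preceq u_\epsilon \preceq u_n$ and the estimate $\epsilon d(x,u_{n+1}) \leq 2^{-(n+1)}$ then close the argument exactly as you describe.
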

\begin{theorem}[Lebourg's Mean-value Theorem {\cite[Theorem 3.2]{MR1454508}}]\label{Leo}
 Let $U$ be an open subset of the Banach space $X$. Let $u,v~\in U$ such that the line segment $[u,v]=\{(1-t)u+tv:0\leq t\leq 1\}\subset U$. If $f:U\rightarrow\mathbb{R}$ is locally Lipschitz function, then there exist $w\in (u,v)$ and $\zeta \in \partial f(w)$ such that
 $$f(v)-f(u)=\langle \zeta,v-u \rangle,$$ where $(u,v)=\{(1-t)u+tv:0< t< 1\}$.
\end{theorem}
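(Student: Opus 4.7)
The plan is to reduce the statement to a one-dimensional Rolle-type argument by parametrizing the line segment, apply the standard fact that a local extremum of a locally Lipschitz function satisfies $0 \in \partial\psi$, and then pull the conclusion back to $\partial f(w)$ through the chain rule for Clarke's generalized gradient.

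Concretely, I would first set $\varphi : [0,1] \to \mathbb{R}$ by $\varphi(t) = f(u + t(v-u))$ and then $\psi(t) = \varphi(t) + t\bigl(f(u) - f(v)\bigr)$. A direct check gives $\psi(0) = f(u) = \psi(1)$. Since $[u,v]$ is compact and contained in the open set $U$, a standard covering argument shows that $f$ is Lipschitz on some open neighborhood of the segment with a single constant $L$, and therefore $\varphi$ (and hence $\psi$) is Lipschitz on $[0,1]$; in particular continuous. By Weierstrass, $\psi$ attains its maximum and its minimum on $[0,1]$. If both are attained only at $t = 0$ and $t = 1$, then $\psi$ is constant and any $t_0 \in (0,1)$ serves; otherwise there is an interior extremum $t_0 \in (0,1)$.

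Next I would invoke the standard necessary condition in non-smooth analysis: if $t_0 \in (0,1)$ is a local extremum of a locally Lipschitz real-valued function $\psi$, then $0 \in \partial \psi(t_0)$. This follows directly from Definition \ref{def2.2}: for a maximum, $\psi^{\circ}(t_0; h) \geq 0$ for every $h \in \mathbb{R}$, so the zero functional lies in the Clarke subdifferential, and symmetrically for a minimum. To translate this back to $f$, I would use the Clarke chain rule for compositions with an affine map and the sum rule, giving
\begin{equation*}
\partial \psi(t_0) \subseteq \bigl\{\langle \zeta, v-u\rangle : \zeta \in \partial f\bigl(u + t_0(v-u)\bigr)\bigr\} + \bigl(f(u) - f(v)\bigr).
\end{equation*}
Hence $0 \in \partial \psi(t_0)$ yields some $\zeta \in \partial f(w)$, with $w = u + t_0(v-u) \in (u,v)$, such that $0 = \langle \zeta, v-u\rangle + f(u) - f(v)$, which rearranges to the desired mean value identity.

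The genuinely non-trivial ingredient, and the step I expect to be the main obstacle, is the Clarke chain rule for the composition $t \mapsto f(u + t(v-u))$, because the inclusion $\partial \varphi(t_0) \subseteq \{\langle \zeta, v-u\rangle : \zeta \in \partial f(w)\}$ is generally only an inclusion, not an equality, and its proof requires an explicit estimate of the generalized directional derivative $\varphi^{\circ}(t_0; 1)$ in terms of $f^{\circ}(w; v-u)$ using the Lipschitz continuity of $f$ near the segment (so that the limsups defining $\varphi^{\circ}$ and $f^{\circ}$ can be matched up along sequences in $U$ rather than along the line). Once that inclusion is established, the remainder of the argument is the one-dimensional Rolle-type extraction of an interior extremum and a routine application of the sum rule $\partial(\psi_1 + \psi_2) \subseteq \partial \psi_1 + \partial \psi_2$ to the linear term $t \mapsto t(f(u) - f(v))$.
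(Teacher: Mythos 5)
Your argument is correct and is essentially the standard proof of Lebourg's theorem found in the cited reference (the paper itself states this result without proof, as \cite[Theorem 3.2]{MR1454508}): parametrize the segment, apply the Rolle-type interior extremum argument to $\psi(t)=f(u+t(v-u))+t(f(u)-f(v))$, use $0\in\partial\psi(t_0)$ at an interior extremum, and pull back via the Clarke sum rule and the chain rule for composition with an affine map. You also correctly flag the chain-rule inclusion as the only step requiring genuine work; nothing is missing.
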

\begin{proposition}[{\cite[Proposition 2.1.5]{MR709590}}]\label{proposition}
 Let $X$ be a real Banach space and $J\in Lip_{loc}(X,\mathbb{R})$.
Let $\{x_n\}\subset X $ and 
$\{\rho_n\}\subset X^*$ with $\rho_n\in\partial J(x_n)$. If $x_n\rightarrow x$ in X and $\rho_n \overset{\ast}{\rightharpoonup}\rho_0$ in $X^*$, then $\rho_0\in\partial J(x).$
\end{proposition}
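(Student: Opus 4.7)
The plan is to show $\rho_0 \in \partial J(x)$ directly from Definition \ref{def2.2}, i.e., to verify that $\langle \rho_0, h \rangle \leq J^0(x;h)$ for every $h \in X$. Since each $\rho_n \in \partial J(x_n)$, the defining inequality gives $\langle \rho_n, h \rangle \leq J^0(x_n; h)$ for all $n$ and all $h \in X$. The weak$^\ast$ convergence $\rho_n \overset{\ast}{\rightharpoonup}\rho_0$ immediately yields $\langle \rho_n, h \rangle \to \langle \rho_0, h \rangle$, so it suffices to establish the upper-semicontinuity estimate $\limsup_n J^0(x_n; h) \leq J^0(x; h)$, at which point passing to the limit closes the argument.

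The key step is therefore the upper-semicontinuity of $x \mapsto J^0(x;h)$ for fixed $h$. I would argue this from the definition: given $\varepsilon > 0$, choose by the $\limsup$ definition of $J^0(x_n;h)$ sequences $y_n \in X$ and $\lambda_n > 0$ with $\|y_n - x_n\| < 1/n$ and $\lambda_n < 1/n$ such that
\[
J^0(x_n;h) - \varepsilon \;<\; \frac{J(y_n + \lambda_n h) - J(y_n)}{\lambda_n}.
\]
Since $x_n \to x$ strongly, we also have $y_n \to x$ and $\lambda_n \to 0^+$, so by the very definition of the Clarke directional derivative at $x$,
\[
\limsup_{n \to \infty} \frac{J(y_n + \lambda_n h) - J(y_n)}{\lambda_n} \;\leq\; J^0(x;h).
\]
Combined with the previous line this yields $\limsup_n J^0(x_n;h) \leq J^0(x;h) + \varepsilon$, and letting $\varepsilon \downarrow 0$ gives the required upper-semicontinuity. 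Here the local Lipschitz hypothesis on $J$ ensures that all the difference quotients involved are uniformly bounded in a neighborhood of $x$ (since eventually $y_n$ and $y_n + \lambda_n h$ lie in a fixed neighborhood of $x$ on which $J$ is Lipschitz), guaranteeing the $\limsup$ is finite and the manipulation is legitimate.

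Putting the pieces together, for every $h \in X$,
\[
\langle \rho_0, h \rangle \;=\; \lim_{n\to\infty} \langle \rho_n, h \rangle \;\leq\; \limsup_{n\to\infty} J^0(x_n;h) \;\leq\; J^0(x;h),
\]
which is precisely the membership condition $\rho_0 \in \partial J(x)$ in Definition \ref{def2.2}. The main (and really only) obstacle is the upper-semicontinuity lemma for $J^0(\cdot;h)$; once that is in hand the conclusion is a one-line passage to the limit. I expect no need for any extra structure beyond the local Lipschitz assumption, so no compactness or reflexivity of $X$ is used.
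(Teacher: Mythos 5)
Your proof is correct. The paper itself gives no argument for this proposition — it is quoted directly from Clarke's book (Proposition 2.1.5 of the cited reference) — and your proof is precisely the standard one: reduce the claim to the upper semicontinuity of $x\mapsto J^0(x;h)$ for fixed $h$, which you establish correctly by extracting near-optimal pairs $(y_n,\lambda_n)$ with $\|y_n-x_n\|<1/n$, $0<\lambda_n<1/n$ and feeding the resulting sequence into the $\limsup$ defining $J^0(x;h)$; the local Lipschitz hypothesis is indeed all that is needed. The only point worth making explicit is that the inequality $\langle\rho_n,h\rangle\leq J^0(x_n;h)$ combined with $\langle\rho_n,h\rangle\to\langle\rho_0,h\rangle$ and the semicontinuity bound is applied for each fixed $h$ separately, which is exactly how weak$^\ast$ convergence enters; no reflexivity or compactness of $X$ is required, as you observe.
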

     \subsection{Some version of Trudinger-Moser type inequality and related results}
     Since we will be working with exponential critical growth, some versions of the Trudinger-Moser inequality are crucial for our arguments. The initial versions were due to Trudinger \cite{trudinger1967imbeddings} and Moser \cite{Moser}. Combining their results, we can recall that if $\Omega \subset \mathbb{R}^N$ ($N \geq 2$) is a bounded domain with smooth boundary, then for any $u\in H_{0}^{1}(\Omega),$
     $$\underset{\Omega}{\int} 
     \mathrm{exp}({\alpha|u|^2})\dx<+\infty~\text{for every}~\alpha>0.$$
     Also there exists a constant $C_1=C_1(\alpha,|\Omega|)$ such that
     $$\underset{\|u\|_{H_{0}^1(\Omega)}\leq 1}\sup\underset{\Omega}{\int}\mathrm{exp}({\alpha|u|^2})\dx\leq C_1,\forall \ \alpha\leq 4\pi. $$
     {An important contribution in this direction was made by Adimurthi and Yadav \cite{MR1046704}. They showed the validity of the Trudinger-Moser-type inequality for the space $H^1(\Omega)$, where $\Omega$ is a bounded domain in $\mathbb{R} ^2$. Another notable contribution in the same direction was due to Cao \cite{cao1992nontrivial}. In this work, the author established the Trudinger-Moser inequality in the whole space $\mathbb{R}^2$. Later, the inequality was also generalized to the whole space $\mathbb{R}^N$ by do Ó \cite{MR1704875}. In this work, the author proved that for any $\alpha>0$, $N\geq 2$ and $u\in W^{1,N}(\mathbb{R}^N)$,
     \begin{equation}\label{m1}
      \underset{\mathbb{R}^N}{\int}\Phi(\alpha|u|^\frac{N}{N-1})\dx<+\infty, 
      \end{equation} where \begin{equation}\label{tm2.2}
      \Phi(t)=\text{exp}(t)-\sum_{j=0}^{N-2}\frac{t^j}{j!}.
      \end{equation} 
     Moreover, if $\|\nabla u\|_{N}\leq1$ and $
     \|u\|_{N}\leq M<+\infty$ and $\alpha<\alpha_N$ (where $\alpha_N=N\omega_{N-1}^\frac{1}{N-1}$) there exists a constant $C>0$ such that
     \begin{equation}\label{m2}
      \underset{\|\nabla u\|_N \leq1,\|u\|_N \leq M }\sup \underset{\mathbb{R}^N}{\int} \Phi\left({\alpha |u|^\frac{N}{N-1}}\right)\dx\leq C.
      \end{equation}
      \begin{remark}
          The function $\Phi_1$ defined in \eqref{tm1} is different from $\Phi$ in \eqref{tm2.2}. Although they are related by $\Phi(|t|^{N/N-1}) = \Phi_1(t)$, for all $t \in \mathbb{R}$. 
      \end{remark}}
      
      Now, we will state two lemmas without proof, which will be useful for our arguments. 
      \begin{lemma}[{\cite[Lemma 2.4]{MR2985880}}]\label{2.1}
      Let $\alpha_0>0$ and $\{u_n\}$ be sequence in $W^{1,N}(\mathbb{R}^N)$ such that 
$$\underset{n\rightarrow+\infty}{\limsup}\|u_n\|_{W^{1,N}(\mathbb{R}^N)}<\left(\frac{\alpha_N}{\alpha_0}\right)^\frac{N-1}{N}.$$
Then, for any $\alpha>\alpha_0$ close to $\alpha_0$ there exists $t>1$ ($t$ close to 1) \text{and} $C_3>0$ such that 
$$\underset{\mathbb{R}^N}{\int} \left(\Phi(\alpha|u_n|^\frac{N}{N-1})\right)^t\dx\leq C_3 ,\forall~n\in \mathbb{N}.$$
      \end{lemma}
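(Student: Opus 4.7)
The plan is to reduce the problem to the global Moser-Trudinger inequality \eqref{m2} via a normalization together with a pointwise inequality for $\Phi$. Set $K := \underset{n\to+\infty}{\limsup}\|u_n\|_{W^{1,N}(\mathbb{R}^N)}$, so $K < (\alpha_N/\alpha_0)^{(N-1)/N}$ by hypothesis. First I would fix parameters in the following order: pick $M > 0$ with $K < M < (\alpha_N/\alpha_0)^{(N-1)/N}$, then choose $\alpha > \alpha_0$ so close to $\alpha_0$ that $\alpha M^{N/(N-1)} < \alpha_N$ still holds, and finally $t > 1$ so close to $1$ that $t\alpha M^{N/(N-1)} < \alpha_N$. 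For every sufficiently large $n$ one then has $\|\nabla u_n\|_{L^N} \leq M$ and $\|u_n\|_{L^N} \leq M$, so the rescaled functions $v_n := u_n/M$ satisfy $\|\nabla v_n\|_{L^N} \leq 1$ and $\|v_n\|_{L^N} \leq 1$.

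Second, I would establish the elementary pointwise comparison
\begin{equation*}
\Phi(s)^{t} \leq C_t\, \Phi(ts) \qquad \text{for all } s \geq 0,
\end{equation*}
where $C_t$ depends only on $t$ and $N$. Writing $\Phi(\tau)=\sum_{j\geq N-1}\tau^{j}/j!$, one has $\Phi(s)\sim s^{N-1}/(N-1)!$ as $s\to 0^+$, which gives $\Phi(s)^{t}/\Phi(ts) \sim s^{(t-1)(N-1)}/\bigl(t^{N-1}((N-1)!)^{t-1}\bigr) \to 0$ since $t>1$; and $\Phi(s)\sim e^{s}$ as $s\to\infty$, which yields $\Phi(s)^{t}/\Phi(ts)\to 1$. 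The ratio is continuous and positive on $(0,\infty)$, so these two boundary asymptotics produce a uniform bound.

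Third, chaining the two steps and using $\alpha|u_n|^{N/(N-1)} = \alpha M^{N/(N-1)}|v_n|^{N/(N-1)}$, one gets
\begin{equation*}
\int_{\mathbb{R}^N}\bigl(\Phi(\alpha|u_n|^{N/(N-1)})\bigr)^{t}\dx \;\leq\; C_t \int_{\mathbb{R}^N}\Phi\bigl(t\alpha M^{N/(N-1)}\,|v_n|^{N/(N-1)}\bigr)\dx.
\end{equation*}
Since $\widetilde{\alpha}:=t\alpha M^{N/(N-1)}<\alpha_N$ with $\|\nabla v_n\|_{L^N}\leq 1$ and $\|v_n\|_{L^N}\leq 1$, the Moser-Trudinger inequality \eqref{m2} applied to the sequence $\{v_n\}$ bounds the right-hand side independently of $n$, which furnishes the constant $C_3$.

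The main technical obstacle is securing the pointwise inequality $\Phi(s)^{t}\leq C_t\,\Phi(ts)$ uniformly in $s\ge 0$: it is precisely the subtraction of the first $N-1$ Taylor terms in the definition of $\Phi$ that forces $\Phi(s)^{t}$ to vanish to order $t(N-1)>N-1$ at the origin and thus be controllable by $\Phi(ts)$, which only vanishes to order $N-1$. Once this comparison is in hand, the remainder of the proof amounts to a careful calibration of $M$, $\alpha$, and $t$ so that the effective exponent entering \eqref{m2} stays strictly below the critical threshold $\alpha_N$.
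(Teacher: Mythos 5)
Your argument is correct. Note that the paper itself states this lemma \emph{without proof}, simply citing \cite[Lemma 2.4]{MR2985880}, so there is no in-paper proof to compare against; your proposal reconstructs essentially the standard argument from that reference: normalize by an $M$ strictly between the $\limsup$ and $(\alpha_N/\alpha_0)^{(N-1)/N}$, calibrate $\alpha$ and $t$ so that $t\alpha M^{N/(N-1)}<\alpha_N$, pass the exponent $t$ inside $\Phi$ via a pointwise comparison, and invoke \eqref{m2}. Two small remarks. First, the literature usually proves the sharper constant-free inequality $\bigl(\Phi(s)\bigr)^{t}\leq\Phi(ts)$ for $t\geq1$, $s\geq0$ (e.g.\ by comparing the multinomial expansion of $\bigl(\sum_{j\geq N-1}s^{j}/j!\bigr)^{t}$ with $\sum_{j\geq N-1}(ts)^{j}/j!$, or by a monotonicity argument on the ratio); your version with a constant $C_t$, justified by the asymptotics at $0$ and $\infty$ together with continuity and positivity of the ratio on $(0,\infty)$, is weaker but entirely sufficient here, and the reasoning for it is sound --- the key observation that the subtracted Taylor polynomial makes $\Phi(s)^{t}$ vanish to higher order at the origin is exactly right. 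Second, your normalization only covers $n$ large; since the conclusion is asserted for all $n\in\mathbb{N}$, you should add one line bounding the finitely many remaining terms, which follows immediately from your pointwise inequality together with \eqref{m1} applied with exponent $t\alpha$, after which $C_3$ is taken as the maximum. Neither point affects the validity of the proof.
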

      \begin{lemma}[{\cite[Lemma 2.3]{MR2488689}}]\label{2.2}
      Let $\alpha,M>0$ and $\alpha M<\alpha_N$ also for $u\in W^{1,N}(\mathbb{R}^N)$ such that $\|u\|_{W^{1,N}(\mathbb{R}^N)} ^\frac{N}{N-1}\leq M$. Then for any $q>N$ we have 
$$\underset{\mathbb{R}^N}{\int}|u|^q\Phi(\alpha|u|^\frac{N}{N-1})\dx\leq C_4\|u\|_{W^{1,N}(\mathbb{R}^N)}^q,$$ 
for a suitable positive constant $C_4$.
      \end{lemma}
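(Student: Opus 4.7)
The plan is to prove the estimate by combining H\"older's inequality with the Moser-Trudinger inequality (\ref{m2}) and the Sobolev embedding $W^{1,N}(\mathbb{R}^N)\hookrightarrow L^r(\mathbb{R}^N)$ for $r\geq N$. First, I would rescale by setting $v = u/\|u\|_{W^{1,N}(\mathbb{R}^N)}$, so that $\|\nabla v\|_{L^N}\leq 1$ and $\|v\|_{L^N}\leq 1$; using the monotonicity of $\Phi$ on $[0,\infty)$ together with the hypothesis $\|u\|_{W^{1,N}}^{N/(N-1)} \leq M$, this gives the pointwise bound $\Phi(\alpha|u|^{N/(N-1)}) \leq \Phi(\alpha M |v|^{N/(N-1)})$.

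Next, applying H\"older's inequality with conjugate exponents $(t, t')$, $t > 1$ to be chosen, I would write
\begin{equation*}
\int_{\mathbb{R}^N} |u|^q \Phi\bigl(\alpha|u|^{N/(N-1)}\bigr) \dx \leq \left(\int_{\mathbb{R}^N} |u|^{qt'} \dx\right)^{1/t'} \left(\int_{\mathbb{R}^N} \Phi\bigl(\alpha M |v|^{N/(N-1)}\bigr)^t \dx\right)^{1/t}.
\end{equation*}
For the first factor, since $q>N$ and $t'>1$ force $qt' > N$, the Sobolev embedding yields $\|u\|_{L^{qt'}}\leq C\|u\|_{W^{1,N}}$, contributing the desired factor $C\|u\|_{W^{1,N}}^q$. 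For the second factor, I would establish the pointwise comparison $\Phi(s)^t \leq C(t) \Phi(ts)$ for all $s\geq 0$ by observing that the ratio $\Phi(s)^t/\Phi(ts)$ tends to $0$ as $s\to 0^+$ (because $\Phi(s)\sim s^{N-1}/(N-1)!$ near the origin and $(N-1)t>N-1$) and to $1$ as $s\to +\infty$ (since $\Phi(s)\sim e^s$), hence is continuous and bounded on $[0,\infty)$. The second factor is then controlled by $\int_{\mathbb{R}^N}\Phi\bigl(t\alpha M|v|^{N/(N-1)}\bigr)\dx$, and since $\alpha M<\alpha_N$, I can pick $t>1$ close enough to $1$ so that $t\alpha M<\alpha_N$; the Moser-Trudinger inequality (\ref{m2}) applied to $v$ then furnishes a uniform bound on this integral.

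The main technical point is the comparison $\Phi(s)^t\leq C(t)\Phi(ts)$; once this is in hand the rest amounts to routine bookkeeping. An alternative, perhaps cleaner, route would be to invoke Lemma \ref{2.1} directly: the hypothesis $\alpha M<\alpha_N$ forces the strict bound $\|u\|_{W^{1,N}} < (\alpha_N/\alpha)^{(N-1)/N}$, which is exactly the condition needed to apply that lemma (to the constant sequence $u_n\equiv u$) and obtain $\int_{\mathbb{R}^N}\Phi\bigl(\alpha|u|^{N/(N-1)}\bigr)^t\dx\leq C$ for some $t>1$, bypassing the explicit pointwise estimate.
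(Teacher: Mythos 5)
Your proposal is correct. Note that the paper does not prove this lemma at all: it is quoted verbatim from the literature (\cite[Lemma 2.3]{MR2488689}) and explicitly stated ``without proof,'' so there is no internal argument to compare against. Your argument is essentially the standard proof of that cited result: normalize $v=u/\|u\|_{W^{1,N}}$ (after disposing of the trivial case $u\equiv 0$), use monotonicity of $\Phi$ and the hypothesis $\|u\|_{W^{1,N}}^{N/(N-1)}\leq M$ to reduce to $\Phi(\alpha M|v|^{N/(N-1)})$, split by H\"older, absorb the factor $\|u\|_{L^{qt'}}^{q}\leq C\|u\|_{W^{1,N}}^{q}$ via the Sobolev embedding (valid since $qt'>N$), and control the remaining factor by the Moser--Trudinger bound \eqref{m2} after choosing $t>1$ with $t\alpha M<\alpha_N$. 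The one genuinely nontrivial ingredient, the comparison $\Phi(s)^{t}\leq C(t)\Phi(ts)$, is justified correctly by your endpoint asymptotics ($\Phi(s)\sim s^{N-1}/(N-1)!$ at $0$, $\Phi(s)\sim e^{s}$ at $\infty$) together with continuity of the ratio; in fact the sharper pointwise inequality $\Phi(s)^{t}\leq\Phi(ts)$ holds, but your weaker version suffices. Your alternative route through Lemma \ref{2.1} applied to the constant sequence also works and is closer in spirit to how the paper uses these tools elsewhere, though it requires a small adjustment of the parameter $\alpha_0$ so that the desired exponent $\alpha$ falls within the ``close to $\alpha_0$'' range of that lemma.
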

\subsection{Orlicz spaces}\label{o1}
This subsection contains some important properties of Orlicz spaces. We define:
{\begin{definition}[$\mathbf{N}$-function]\label{def 2.5}
A function $\Theta :\mathbb{R}\rightarrow[0,+\infty)$ is an $\textbf{N}$-function if:
       \begin{itemize}
           \item[(a)] $\Theta$ is continuous and convex. 
           \item[(b)] $\Theta=0$ if and only if $t=0$.
           \item[(c)] $\Theta$ is even.
           \item[(d)] $\underset{t\rightarrow0}{\lim}\frac{\Theta(t)}{t}=0$ and $\underset{t\rightarrow +\infty}{\lim}\frac{\Theta(t)}{t}=+\infty$.
       \end{itemize} \end{definition}}
\noindent \begin{example} For example, $\Theta(t)=\mathrm{exp}({t^2})-1$ is an $\textbf{N}$-function. \end{example}{
       \begin{definition} We say that an $\textbf{N}$-function $\Theta$ satisfies $\Delta_2$-Condition if there exists $C>0$ such that
        $\Theta(2t)\leq C\Theta(t)$ for all $t\geq0$. \end{definition}
        
       \begin{definition}[Complementary/conjugate of an $N$-function]  The complementary/conjugate function $\tilde{\Theta}$ associated with $\Theta$ is given by
         \begin{equation}\label{s4}
     \tilde{\Theta}(s)=\underset{t\geq 0}{\sup}\{st-\Theta(t)\}\ \text{for } s\geq0.
     \end{equation} \end{definition}
\noindent From \eqref{s4}, we derive the  following inequality, which is given by
    \begin{equation}\label{s5}
    st\leq\Theta(t)+\tilde{\Theta}(s),\ \forall ~s,t\geq0.
\end{equation}
\begin{definition}[Orlicz space] The Orlicz space associated with $\Theta$ as
 \begin{equation}\label{s6}
       L^{\Theta}(\mathbb{R}^N)=\left\{u\in L^{1}_{loc}(\mathbb{R}^N):\underset{\mathbb{R}^N}{\int}\Theta\left(\frac{|u|}{\lambda}\right)\dx<+\infty ~\text{for some}~\lambda>0\right\}.
      \end{equation} \end{definition}
\noindent The space $L^{\Theta}(\mathbb{R}^N)$ is { a normed linear  space} endowed with following norm
      \begin{equation*}
      \|u\|_{\Theta}=\inf\left\{\lambda >0:\underset{\mathbb{R}^N}{\int}\Theta\left(\frac{|u|}{\lambda}\right)\dx\leq1\right\} .
      \end{equation*}
      The convexity of $\Theta$ implies that 
      \begin{equation}\label{s8}
      \|u\|_{\Theta}\leq 1\iff \underset{\mathbb{R}^N}{\int} \Theta(|u|)\dx\leq1.
      \end{equation}
      Some of the key properties of Orlicz spaces are listed below:
\begin{itemize}
    \item[(A)] $\left(L^{\Theta}(\mathbb{R}^N),\|\cdot\|_{\Theta}\right)$ is a Banach space.
    \item[(B)] $ L^{\Theta}(\mathbb{R}^N)$ is separable and reflexive when $\Theta$ and $\tilde{\Theta}$ satisfy $\Delta_2$ condition.
    \item[(C)]{ $\displaystyle \left|\underset{\mathbb{R}^N}{\int} uv ~\dx\right|\leq2\|u\|_{\Theta}\|v\|_{\tilde{\Theta}},\ \forall~ u\in L^{\Theta}(\mathbb{R}^N),~v\in L^{\tilde{\Theta}}(\mathbb{R}^N).$}
    \item[(D)] From the above property, we have the following relation,
    $$ L^{\tilde{\Theta}}(\mathbb{R}^N)\subset \left(L^{\Theta}(\mathbb{R}^N)\right)^*.$$
    \item[(E)] {We define} $$E_{\Theta}(\mathbb{R}^N)=\overline{\left\{u\in L^1_{loc}(\mathbb{R}^N):u \text{ having bounded support on }\mathbb{R}^N\right\}}^{\|\cdot\|_{\Theta}},$$
    then equivalently, we also have $$E_{\Theta}(\mathbb{R}^N)=\overline{C_c^{\infty}(\mathbb{R}^N)}^{\|\cdot\|_{\Theta}}. $$
    \item[(F)] The dual of $E_{\Theta}(\mathbb{R}^N)$ is isomorphic to $L^{\tilde\Theta}(\mathbb{R}^N)$.    \end{itemize}
For proofs of the above-stated properties and more details on Orlicz spaces, we refer to \cite{MR1113700, MR2424078}.}
\subsection{Embedding Results}
In this subsection, we present some important embedding results that will be used later.
\begin{lemma}\label{Lemma 2.3}
Suppose \ref{V1} holds. Then, the following embedding is continuous
$$\mathbf{X}\hookrightarrow W^{1,p}(\mathbb{R}^N)\cap W^{1,N}(\mathbb{R}^N)\hookrightarrow L^{q}(\mathbb{R}^N),$$
for any $q\in [p,p^*]\cup[N,+\infty).$
\end{lemma}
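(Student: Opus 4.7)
The plan is to prove the two embeddings separately and then compose them. For the first inclusion $\mathbf{X} \hookrightarrow W^{1,p}(\mathbb{R}^N) \cap W^{1,N}(\mathbb{R}^N)$, I would start with assumption \ref{V1}, which gives $V(x) \geq V_0 > 0$ pointwise. This allows me to bound $\int_{\mathbb{R}^N} |u|^r \dx \leq V_0^{-1} \int_{\mathbb{R}^N} V(x)|u|^r \dx$ for $r \in \{p,N\}$. Combined with the obvious $\int |\nabla u|^r \leq \int |\nabla u|^r$, this yields $\|u\|_{W^{1,r}}^r \leq \max\{1, V_0^{-1}\}\, \|u\|_{W_V^{1,r}}^r$. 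Taking $r$-th roots and adding the cases $r=p$ and $r=N$ then gives a constant $C_V > 0$ such that
\begin{equation*}
\|u\|_{W^{1,p}} + \|u\|_{W^{1,N}} \leq C_V \bigl(\|u\|_{W_V^{1,p}} + \|u\|_{W_V^{1,N}}\bigr) = C_V \|u\|_{\mathbf{X}},
\end{equation*}
which establishes continuity of the first embedding.

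For the second inclusion, I would invoke the classical Sobolev embedding theorems in $\mathbb{R}^N$. Since $p < N$ by hypothesis, one has $W^{1,p}(\mathbb{R}^N) \hookrightarrow L^q(\mathbb{R}^N)$ continuously for every $q \in [p, p^*]$, while for $r = N$ (the borderline case) one has $W^{1,N}(\mathbb{R}^N) \hookrightarrow L^q(\mathbb{R}^N)$ continuously for every $q \in [N, +\infty)$. In either case, there is a constant $C_q$ such that $\|u\|_{L^q} \leq C_q \|u\|_{W^{1,p}}$ or $\|u\|_{L^q} \leq C_q \|u\|_{W^{1,N}}$, respectively, and both of these are bounded above by $C_q \bigl(\|u\|_{W^{1,p}} + \|u\|_{W^{1,N}}\bigr)$, the natural norm on the intersection space.

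Composing the two inclusions immediately gives $\|u\|_{L^q} \leq C_q C_V \|u\|_{\mathbf{X}}$ for every $q \in [p,p^*] \cup [N,+\infty)$, as required. I do not anticipate any genuine obstacle here: the argument is entirely routine and relies only on the lower bound \ref{V1} on the potential and the standard Sobolev embeddings for $W^{1,p}$ and $W^{1,N}$. The only point to mention carefully is that the range $(p^*, N)$ is not covered, which is consistent with the statement of the lemma and reflects the fact that $W^{1,p} \cap W^{1,N}$ does not automatically interpolate across that gap when $p < N/2$.
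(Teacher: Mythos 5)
Your argument is correct and follows essentially the same route as the paper: both use the pointwise bound $V(x)\geq V_0$ from \ref{V1} to obtain $\|u\|_{W^{1,r}}^r\leq \max\{1,V_0^{-1}\}\|u\|_{W_V^{1,r}}^r$ for $r\in\{p,N\}$, and then invoke the classical Sobolev embeddings $W^{1,p}(\mathbb{R}^N)\hookrightarrow L^q(\mathbb{R}^N)$ for $q\in[p,p^*]$ and $W^{1,N}(\mathbb{R}^N)\hookrightarrow L^q(\mathbb{R}^N)$ for $q\in[N,+\infty)$. No issues.
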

\begin{proof} Due to assumption \ref{V1}, for any $u\in W_V^{1,p}(\mathbb{R}^N)$, we get
$$\underset{\mathbb{R}^N}{\int}(|\nabla u|^p+V(x)|u|^p)\dx\geq \underset{\mathbb{R}^N}{\int}(|\nabla u|^p+V_0|u|^p)\dx\geq \min\{1,V_0\}\underset{\mathbb{R}^N}{\int}(|\nabla u|^p+|u|^p)\dx.$$
It implies$$\|u\|_{W_{V}^{1,p}}^p\geq \min\{1,V_0\}\|u\|_{W^{1,p}}^p.$$
So, the following embedding is continuous
\begin{equation}\label{e1}
W_{V}^{1,p}(\mathbb{R}^N)\hookrightarrow W^{1,p}(\mathbb{R}^N)\hookrightarrow L^q(\mathbb{R}^N) \text{  for any } q\in[p,p^*].
\end{equation}
Similarly, we have
\begin{equation}\label{e2}
W_{V}^{1,N}(\mathbb{R}^N)\hookrightarrow W^{1,N}(\mathbb{R}^N)\hookrightarrow L^q(\mathbb{R}^N) \text{ for any } q\in[N,+\infty).
\end{equation}
Hence from $\eqref{e1}$ and $\eqref{e2}$, the proof of the lemma follows.\end{proof}
\begin{lemma}\label{2.4}
Suppose \ref{V1}-\ref{V2} hold. Then, the following embedding  is compact
$$\mathbf{X}\hookrightarrow\hookrightarrow L^{q}(\mathbb{R}^N), \forall \ q\geq 1.$$
\end{lemma}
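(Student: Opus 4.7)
The plan is to upgrade the weak convergence in $\mathbf{X}$ first to strong convergence in $L^1(\mathbb{R}^N)$, using local Rellich--Kondrachov on balls together with a uniform tail estimate extracted from \ref{V2}, and then to bootstrap to every $L^q$ with $q \geq 1$ by classical $L^p$-interpolation.

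Let $\{u_n\} \subset \mathbf{X}$ be bounded. By reflexivity of $\mathbf{X}$, I would pass to a subsequence with $u_n \rightharpoonup u$ weakly; by Lemma \ref{Lemma 2.3}, $\{u_n\}$ is also bounded in $W^{1,N}(\mathbb{R}^N)$. For each fixed $R>0$, the inclusion $\mathbf{X}|_{B_R} \hookrightarrow W^{1,N}(B_R)$ combined with the Rellich--Kondrachov embedding $W^{1,N}(B_R) \hookrightarrow\hookrightarrow L^q(B_R)$ (valid for every $q \in [1,\infty)$) yields, after a standard diagonal extraction, $u_n \to u$ strongly in $L^q(B_R)$ for every $R>0$ and every $q \geq 1$.

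For the tail in $L^1$, I would write $|u_n| = V^{1/N}|u_n|\cdot V^{-1/N}$ and apply H\"older with conjugate exponents $N$ and $N/(N-1)$:
\begin{equation*}
\int_{\{|x|>R\}} |u_n|\,dx \leq \left(\int_{\{|x|>R\}} V(x)|u_n|^N\,dx\right)^{1/N}\left(\int_{\{|x|>R\}} V(x)^{-1/(N-1)}\,dx\right)^{(N-1)/N}.
\end{equation*}
The first factor is bounded uniformly in $n$ by $\|u_n\|_{W^{1,N}_V}$, while the second vanishes as $R \to \infty$ by \ref{V2}. The same estimate bounds the tail of $u$. Combining this with the local convergence from the previous paragraph through a standard $\varepsilon/3$ splitting delivers $u_n \to u$ in $L^1(\mathbb{R}^N)$.

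Finally, for any fixed $q > 1$ pick $r > q$ with $r \geq N$; since $\{u_n - u\}$ is bounded in $L^r(\mathbb{R}^N)$ by the continuous embedding $W^{1,N}(\mathbb{R}^N) \hookrightarrow L^r(\mathbb{R}^N)$ from Lemma \ref{Lemma 2.3}, the classical interpolation inequality
\begin{equation*}
\|u_n - u\|_{L^q(\mathbb{R}^N)} \leq \|u_n - u\|_{L^1(\mathbb{R}^N)}^{\theta}\,\|u_n - u\|_{L^r(\mathbb{R}^N)}^{1-\theta}, \qquad \frac{1}{q} = \theta + \frac{1-\theta}{r},
\end{equation*}
together with $L^1$ convergence forces $u_n \to u$ in $L^q(\mathbb{R}^N)$. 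The only non-routine ingredient is the uniform tail estimate: $V$ is the sole decay mechanism, and the exponent of $V$ admitted by \ref{V2} matches exactly the power produced by applying H\"older against $\|u_n\|_{W^{1,N}_V}$, which is why the argument closes cleanly at $q=1$ and the rest is interpolation.
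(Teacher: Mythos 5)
Your proof is correct, but it takes a different route from the paper's. The paper disposes of this lemma in two lines: it notes $\mathbf{X}\hookrightarrow W_V^{1,N}(\mathbb{R}^N)$ and then simply cites an external result (Lemma 2.4 of the reference \cite{MR2873855}) asserting that $W_V^{1,N}(\mathbb{R}^N)\hookrightarrow\hookrightarrow L^q(\mathbb{R}^N)$ for all $q\geq 1$ under \ref{V1}--\ref{V2}. You instead give a self-contained argument: local Rellich--Kondrachov on balls, a uniform tail bound obtained by writing $|u_n|=\left(V^{1/N}|u_n|\right)V^{-1/N}$ and applying H\"older with exponents $N$ and $N/(N-1)$ so that \ref{V2} controls the second factor, and then interpolation between $L^1$ and $L^r$ ($r\geq N$) to cover all $q>1$. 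All the steps check out: the first H\"older factor is dominated by $\|u_n\|_{W_V^{1,N}}$, the second vanishes as $R\to\infty$ by absolute continuity of the integral of the $L^1$ function $V^{-1/(N-1)}$, the weak limit $u$ lies in $\mathbf{X}$ by reflexivity so the same tail bound applies to it, and the interpolation exponent $\theta$ is strictly positive for $1<q<r$. In effect you have reproved the cited compactness result rather than invoking it; what this buys is transparency about exactly where \ref{V2} enters (it is the sole source of the uniform tail decay, and its exponent is precisely the one H\"older demands against the weighted $L^N$ norm), at the cost of length relative to the paper's citation-based proof.
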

\begin{proof} Since we have 
\begin{equation}\label{e3}
\mathbf{X}\hookrightarrow W_{V}^{1,N}(\mathbb{R}^N).
\end{equation}
From \cite[Lemma 2.4]{MR2873855}, we have
\begin{equation}\label{e4}
W_{V}^{1,N}(\mathbb{R}^N)\hookrightarrow\hookrightarrow L^{q}(\mathbb{R}^N),\forall\ q\geq1.
\end{equation}
From \eqref{e3} and \eqref{e4}, the conclusion follows immediately. \end{proof}
{In the next lemma, we establish a relation between $\Phi_1$ (defined in \eqref{tm1}) and its complementary function $\tilde{\Phi}_1.$}
\begin{lemma}\label{e5}
Let $\zeta(t)=\max\{t,t^N \}$ and
 $\tilde{\Phi}_{1}$ be conjugate function associated with $\Phi_{1}$. Then, the following inequalities are satisfied:
\begin{itemize}
\item[(i)]  $\tilde{\Phi}_{1}(\frac{\Phi_{1}(r)}{r})\leq \Phi_{1}(r),~\forall~ r>0$.
\item[(ii)] $\tilde{\Phi}_{1}(tr)\leq \zeta(t)\tilde{\Phi}_{1}(r), ~\forall~ t,r \geq 0.$ 
\end{itemize}
Hence, $\tilde{\Phi}_{1}\in \Delta_2$ and $E_{\tilde{\Phi}_{1}}(\mathbb{R}^N)=L^{\tilde{\Phi}{1}}(\mathbb{R}^N)$.
\end{lemma}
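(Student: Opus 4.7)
My plan splits into three parts, one for each claim in the lemma.

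For part (i), I would use only the convexity of $\Phi_1$ together with $\Phi_1(0)=0$, which implies that $t \mapsto \Phi_1(t)/t$ is non-decreasing on $(0,\infty)$. Starting from the definition $\tilde{\Phi}_1\!\left(\Phi_1(r)/r\right) = \sup_{t \geq 0}\left[\,t\,\Phi_1(r)/r - \Phi_1(t)\,\right]$, I would split the supremum into two regimes. For $t \geq r$, monotonicity of $\Phi_1(t)/t$ gives $\Phi_1(t) \geq t\Phi_1(r)/r$, so the bracket is $\leq 0$. For $0 \leq t \leq r$, the bracket is bounded by $t\Phi_1(r)/r \leq \Phi_1(r)$. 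Taking the sup yields (i).

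For part (ii) with $0 \leq t \leq 1$ the conclusion is immediate from convexity of $\tilde{\Phi}_1$ and $\tilde{\Phi}_1(0)=0$: $\tilde{\Phi}_1(tr) = \tilde{\Phi}_1\!\left(t\,r + (1-t)\cdot 0\right) \leq t\,\tilde{\Phi}_1(r) = \zeta(t)\tilde{\Phi}_1(r)$. The substantive case is $t \geq 1$, where $\zeta(t) = t^N$. For this I would first establish the key homogeneity-type inequality
\[
\Phi_1(\lambda s) \geq \lambda^{N}\,\Phi_1(s), \qquad \forall\, \lambda \geq 1,\ s \geq 0,
\]
obtained directly from the Taylor expansion
\[
\Phi_1(s) = \sum_{j=N-1}^{\infty} \frac{s^{Nj/(N-1)}}{j!}, \qquad s \geq 0,
\]
since every exponent satisfies $Nj/(N-1) \geq N$ (equality at $j=N-1$), so $\lambda^{Nj/(N-1)} \geq \lambda^N$ term-by-term for $\lambda \geq 1$. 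Then, by the change of variable $s = \lambda^{1/(N-1)} u$ in the Legendre transform, with $\lambda = t \geq 1$,
\[
\tilde{\Phi}_1(t\,r) = \sup_{u \geq 0}\left[\, t^{N/(N-1)}\,r u - \Phi_1\!\left(t^{1/(N-1)} u\right)\,\right] \leq t^{N/(N-1)} \sup_{u \geq 0}\bigl[r u - \Phi_1(u)\bigr] = t^{N/(N-1)}\tilde{\Phi}_1(r).
\]
Since $N/(N-1) \leq N$ for $N \geq 2$ and $t \geq 1$, we conclude $\tilde{\Phi}_1(tr) \leq t^N \tilde{\Phi}_1(r)$, completing (ii).

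Finally, $\tilde{\Phi}_1 \in \Delta_2$ is obtained by specializing (ii) to $t = 2$, yielding $\tilde{\Phi}_1(2r) \leq 2^N \tilde{\Phi}_1(r)$, and the equality $E_{\tilde{\Phi}_1}(\mathbb{R}^N) = L^{\tilde{\Phi}_1}(\mathbb{R}^N)$ is then a standard consequence of the $\Delta_2$-condition from classical Orlicz space theory (the two references cited in Section~\ref{o1}). The only genuinely nontrivial step is the power-series estimate $\Phi_1(\lambda s) \geq \lambda^N \Phi_1(s)$; once it is in hand, both the duality argument and the conclusions about $\Delta_2$ and $E_{\tilde{\Phi}_1}$ are formal.
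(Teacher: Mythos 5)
Your proof is correct, and for part (ii) it takes a genuinely different route from the paper. The paper handles $t>1$ by differentiating the Legendre duality identity to get $\tilde{\Phi}_{1}'=(\Phi_1')^{-1}$, converting the lower bound $t\Phi_1'(t)/\Phi_1(t)\geq \frac{N}{N-1}$ into the differential inequality $\tilde{\Phi}_{1}'(s)/\tilde{\Phi}_{1}(s)\leq N/s$, and integrating from $r$ to $rt$; you instead prove the quasi-homogeneity bound $\Phi_1(\lambda s)\geq \lambda^N\Phi_1(s)$ for $\lambda\geq 1$ term-by-term from the power series $\Phi_1(s)=\sum_{j\geq N-1}s^{Nj/(N-1)}/j!$ (valid since every exponent is at least $N$) and then rescale the variable inside the supremum defining $\tilde{\Phi}_1$. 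Your route avoids the smoothness and invertibility considerations needed to differentiate $\tilde{\Phi}_1$, and it actually delivers the sharper estimate $\tilde{\Phi}_1(tr)\leq t^{N/(N-1)}\tilde{\Phi}_1(r)$ for $t\geq1$, of which the stated $t^N$ bound is a weakening; the paper's integration method, by contrast, generalizes to $N$-functions that are not given by explicit series but satisfy an index condition $t\Phi'(t)/\Phi(t)\geq q$. You also prove part (i) directly by splitting the supremum at $t=r$ and using monotonicity of $\Phi_1(t)/t$, where the paper simply cites an external lemma; the case $0\leq t\leq1$ of (ii) and the deduction of $\Delta_2$ and $E_{\tilde{\Phi}_1}(\mathbb{R}^N)=L^{\tilde{\Phi}_1}(\mathbb{R}^N)$ coincide with the paper's treatment.
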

\begin{proof} { The proof of (i) follows from \cite[Lemma A.2]{MR2271234}.
Next, we proceed to prove (ii). Consider the function $h(t)=\frac{t\Phi_1'(t)}{\Phi_1(t)}$. We note that due to the convexity of $\Phi_1$, the function  $h$ is increasing for all $t\ge 1.$ Hence,
\begin{equation*}
\frac{N}{N-1}<h(1)=\frac{N}{N-1}\frac{\Phi_1'(1)}{\Phi_1(1)}<\frac{\Phi'_{1}(t)t}{\Phi_{1}(t)},~\forall t\geq1.
\end{equation*}
This leads to 
$$\frac{N}{N-1}\leq \frac{\Phi'_{1}(t)t}{\Phi_{1}(t)},\ \forall~   t\in(1,+\infty).$$}
Fix $s>0$ such that $t=\tilde{\Phi}_{1}'(s)$, from \cite[Lemma 2.5]{MR2271234}, we have  $$s\Phi^{'}_{1}(s)=\tilde{\Phi}_{1}(s)+\Phi_{1}(\tilde{\Phi}^{'}_{1}(s)).$$
Differentiating the above and using $\Phi_{1}''(s)\neq0$, we obtain $\tilde{\Phi}_{1}'=(\Phi_1')^{-1}$.
Therefore, we get
$$ \frac{N}{N-1}\leq \frac{s\Phi'_{1}(s)}{s\tilde{\Phi}'_{1}(s)-\tilde{\Phi}_{1}(s)}.$$
This further implies
\begin{equation}\label{e6}
\frac{\tilde{\Phi}_{1}^{'}(s)}{\tilde{\Phi}_{1}(s)}\leq\frac{N}{s}.
\end{equation} For $t>1$ and $r>0$,  on integrating \eqref{e6} from $r$ to $rt$, we have
\begin{equation}\label{e7}
\tilde{\Phi}_{1}(rt)\leq t^{N} \tilde{\Phi}_{1}(r),~\forall ~t>1,r\geq0.
\end{equation}
Since $\tilde{\Phi}_{1}(0)=0$ and $\tilde{\Phi}_{1}$ is convex, so
$$1\leq\frac{\tilde{\Phi}_{1}^{'}(t)t}{\tilde{\Phi}_{1}(t)},~t\in(0,\infty).$$
For $0<t\leq 1$, on integrating \eqref{e6} from $rt$ to $r$, we have
\begin{equation}\label{e8}
\tilde{\Phi}_{1}(rt)\leq t \tilde{\Phi}_{1}(r),~\forall ~t\in(0,1],r\geq0.
\end{equation}
From \eqref{e7} and \eqref{e8}, (ii) follows immediately. Hence, $\tilde{\Phi}_{1}\in\Delta_2$ and $E_{\tilde{\Phi}_{1}}(\mathbb{R}^N)=L^{\tilde{\Phi}_{1}}(\mathbb{R}^N)$. \end{proof}
\begin{lemma}\label{2.6}
Let $E_{\Phi_{1}}(\mathbb{R}^N)$ be a subspace of an Orlicz space $L^{\Phi_{1}}(\mathbb{R}^N)$ defined in subsection~\ref{o1}. Then, the following embeddings hold:
\begin{itemize}
\item[(i)] $\mathbf{X}\hookrightarrow\ E_{\Phi_{1}}(\mathbb{R}^N)$, and
\item[(ii)] $E_{\Phi_{1}}(\mathbb{R}^N)\hookrightarrow L^{N}(\mathbb{R}^N).$
\end{itemize}
\end{lemma}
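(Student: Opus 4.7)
The plan is to reduce both embeddings to the Trudinger--Moser inequalities \eqref{m1}--\eqref{m2}, the explicit Taylor--series form of $\Phi_1$, and the characterisation of the Orlicz norm in \eqref{s8}.

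For (i), I would proceed in two stages. First, to see that every $u \in \mathbf{X}$ actually lies in the Morse subspace $E_{\Phi_1}(\mathbb{R}^N)$, I would invoke the standard fact that, since $\Phi_1 \notin \Delta_2$ (evident from $\Phi_1(2t)/\Phi_1(t) \to \infty$), one has $u \in E_{\Phi_1}(\mathbb{R}^N)$ if and only if $\int_{\mathbb{R}^N} \Phi_1(\lambda u)\,\dx < +\infty$ for every $\lambda > 0$. By Lemma~\ref{Lemma 2.3}, $\mathbf{X} \hookrightarrow W^{1,N}(\mathbb{R}^N)$, and the identity $\Phi(|t|^{N/(N-1)}) = \Phi_1(t)$ gives $\Phi_1(\lambda u) = \Phi(\lambda^{N/(N-1)} |u|^{N/(N-1)})$, so \eqref{m1} applied with $\alpha = \lambda^{N/(N-1)}$ supplies the required finiteness for every $\lambda > 0$.

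Second, for the continuity of the embedding I would normalise $\tilde u = u/\|u\|_{W^{1,N}(\mathbb{R}^N)}$ so that $\|\nabla \tilde u\|_N \leq 1$ and $\|\tilde u\|_N \leq 1$, fix some $\beta > 0$ with $\beta^{N/(N-1)} < \alpha_N$, and apply \eqref{m2} with $M = 1$ to obtain a constant $C$ with $\int_{\mathbb{R}^N} \Phi_1(\beta \tilde u)\,\dx \leq C$. Convexity of $\Phi_1$ together with $\Phi_1(0) = 0$ yields $\Phi_1(s/K) \leq \Phi_1(s)/K$ for $K \geq 1$, which upgrades this to $\int_{\mathbb{R}^N} \Phi_1(\beta \tilde u / \max(C,1))\,\dx \leq 1$; hence, by \eqref{s8}, $\|\beta \tilde u/\max(C,1)\|_{\Phi_1} \leq 1$, giving $\|u\|_{\Phi_1} \leq C' \|u\|_{W^{1,N}(\mathbb{R}^N)} \leq C'' \|u\|_{\mathbf{X}}$, again via Lemma~\ref{Lemma 2.3}.

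For (ii), expand
\[
\Phi_1(t) = \sum_{j=N-1}^{\infty} \frac{|t|^{Nj/(N-1)}}{j!} \geq \frac{|t|^N}{(N-1)!},
\]
so $|t|^N \leq (N-1)!\,\Phi_1(t)$ pointwise. For any $u \in E_{\Phi_1}(\mathbb{R}^N)$ with $\|u\|_{\Phi_1} > 0$, applying \eqref{s8} to $u/\|u\|_{\Phi_1}$ gives $\int_{\mathbb{R}^N} \Phi_1(u/\|u\|_{\Phi_1})\,\dx \leq 1$, and substituting the pointwise bound yields $\|u\|_N \leq ((N-1)!)^{1/N} \|u\|_{\Phi_1}$, which is the stated continuous embedding.

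The main technical point is part~(i): one must carefully separate \emph{membership} in $E_{\Phi_1}$ (which, because $\Phi_1 \notin \Delta_2$, requires finiteness of $\int \Phi_1(\lambda u)\,\dx$ for every $\lambda > 0$, not just one) from the \emph{continuity} of the embedding (which needs the quantitative inequality \eqref{m2}). The bookkeeping between the exponent $\alpha$ in $\Phi(\alpha |t|^{N/(N-1)})$ and the rescaling $\beta = \alpha^{(N-1)/N}$ in $\Phi_1(\beta t)$ must be handled consistently; once this is in place, the remaining computations are routine.
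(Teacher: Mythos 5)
Your proof is correct and follows essentially the same route as the paper: membership via \eqref{m1}, continuity via \eqref{m2} combined with convexity of $\Phi_1$ and the norm characterisation \eqref{s8}, and the Taylor-series lower bound $\Phi_1(t)\geq |t|^N/(N-1)!$ for part (ii). The only difference is that you explicitly justify membership in the subspace $E_{\Phi_1}(\mathbb{R}^N)$ (rather than merely $L^{\Phi_1}(\mathbb{R}^N)$) via the standard ``finite modular for every $\lambda>0$'' characterisation, a step the paper instead delegates to a citation.
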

\begin{proof} $(i)$ Let $u\in \mathbf{X}$, then thanks to \eqref{m1}, we have $u\in L^{\Phi_{1}}(\mathbb{R}^N)$. Hence
$$\mathbf{X}\subset L^{\Phi_{1}}(\mathbb{R}^N).$$
From \eqref{m2}, we have
\begin{equation*}
\underset{\mathbb{R}^N}{\int} \Phi_{1}\left(\frac{|u|}{\|u\|_{W^{1,N}}}\right)\dx \leq C_2.
\end{equation*}
Now, the convexity of $\Phi_{1}$ and $\Phi_{1}(0)=0$ together imply that for some $C>1$, we have 
\begin{equation*}
\underset{\mathbb{R}^N}{\int} \Phi_{1}\left(\frac{|u|}{C\|u\|_{W^{1,N}}}\right)\dx \leq 1.
\end{equation*}
On combining \eqref{s8} and \eqref{e1}, we have $\mathbf{X}\hookrightarrow L^{\Phi_{1}}(\mathbb{R}^N).$ From \cite{MR3542958} it follows that $\mathbf{X}\hookrightarrow E_{\Phi_{1}}(\mathbb{R}^N) $. Hence, the proof of (i) follows. \\
(ii) { Since the following inequality holds
$$\frac{|t|^N}{(N-1)}\leq \sum_{j=N-1}^{\infty} \frac{|t|^\frac{Nj}{N-1}}{j!},$$
then, for each $u\in E_{\Phi_{1}}(\mathbb{R}^N)$, we have
$$ \frac{1}{N-1}\underset{\mathbb{R}^N}{\int}\left(\frac{|u|}{\|u\|_{\Phi_{1}}}\right)^N \dx\leq \underset{\mathbb{R}^N}{\int}\Phi_{1}\left(\frac{|u|}{\|u\|_{\Phi_{1}}}\right) \dx\leq 1.$$}
Hence $E_{\Phi_{1}}(\mathbb{R}^N)\hookrightarrow L^{N}(\mathbb{R}^N),$ and the proof is complete. \end{proof}
\section{Properties of functional \texorpdfstring{$\Upsilon$}{Lg}}\label{section 3}
In this section we will show that the functional  $\Upsilon:\mathbf{X}\rightarrow\mathbb{R}$ given by
    $$\Upsilon(u)=\underset{\mathbb{R}^N}{\int}F(x,u)\dx$$ 
    is well-defined. We consider the functional \texorpdfstring{$\Upsilon$}{Lg} in a more appropriate domain to apply variational methods. One possible way to define it is the following 
    $$\Upsilon:L^{\Phi_1}(\mathbb{R}^N)\rightarrow\mathbb{R}.$$
    But, once $\Phi_1$ does not satisfy $\Delta_2$ condition, we cannot guarantee that $I\in (L^{\Phi_1}(\mathbb{R}^N))^*$, where we define
    \begin{equation*}
    I(u)=\underset{\mathbb{R}^N}{\int}uv\dx,~\forall~ u\in L^{\Phi_1}(\mathbb{R}^N),
    \end{equation*}
    for some measurable function $v.$ Using the fact that  dual of $E_{\Phi_1}(\mathbb{R}^N)$ is isomorphic to $L^{\tilde\Phi_1}(\mathbb{R}^N)$ and by Lemma \ref{e5}, it is appropriate to define the functional $\Upsilon$ on $E_{\Phi_1}(\mathbb{R}^N).$ Next, we show the functional $\Upsilon$ is locally Lipschitz.
\begin{theorem}\label{Theorem 3.1}
Assume  \ref{f6} hold. The functional  $\Upsilon:E_{\Phi_{1}}(\mathbb{R}^N)\rightarrow\mathbb{R}$ given by
    $$\Upsilon(u)=\underset{\mathbb{R}^N}{\int}F(x,u)\dx$$ 
    is well defined and  $\Upsilon \in Lip_{loc}(E_{\Phi_{1}}(\mathbb{R}^N),\mathbb{R)}.$
    \end{theorem}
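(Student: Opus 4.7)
The plan is to start from \ref{f6}: noting that $\exp(\alpha_0|t|^{N/(N-1)}) - \sum_{j=0}^{N-2}\alpha_0^j|t|^{Nj/(N-1)}/j! = \Phi_1(\alpha_0^{(N-1)/N}t)$, every $\xi \in \partial_t F(x,t)$ satisfies $|\xi| \leq c_1|t|^{N-1} + c_2\Phi_1(\alpha_0^{(N-1)/N}t)$. Integrating in $s \in [0,|t|]$ and using monotonicity of $\Phi_1$ gives $|F(x,t)| \leq (c_1/N)|t|^N + c_2|t|\,\Phi_1(\alpha_0^{(N-1)/N}|t|)$. For $u \in E_{\Phi_1}(\mathbb{R}^N)$, the first term is integrable by Lemma \ref{2.6}(ii) ($E_{\Phi_1} \hookrightarrow L^N$). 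The second is handled by Young's inequality $|u|\Phi_1(\alpha_0^{(N-1)/N}|u|) \leq \Phi_1(|u|) + \tilde\Phi_1(\Phi_1(\alpha_0^{(N-1)/N}|u|))$, after which Lemma \ref{e5} (apply (ii) with $t=w$, $r=\Phi_1(w)/w$ and then (i)) yields $\tilde\Phi_1(\Phi_1(w)) \leq \zeta(w)\Phi_1(w) \leq \Phi_1(cw) + C\Phi_1(w)$ for some $c>1$, $C>0$, absorbing the polynomial factor $\zeta(w)=\max\{w,w^N\}$ into the exponential growth of $\Phi_1$. Since $u \in E_{\Phi_1}(\mathbb{R}^N)$ is characterized by $\int \Phi_1(\mu|u|)\,dx < \infty$ for every $\mu > 0$, all these integrals are finite.

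\textbf{Lipschitz estimate via Lebourg.} Fix $u_0 \in E_{\Phi_1}(\mathbb{R}^N)$ and consider the neighborhood $U = B(u_0, r)$ with $r > 0$ to be chosen small. For $u_1, u_2 \in U$, applying Lebourg's Mean Value Theorem (Theorem \ref{Leo}) pointwise a.e.\ yields $w(x)$ between $u_1(x), u_2(x)$ and $\zeta(x) \in \partial_t F(x, w(x))$ with $F(x,u_1)-F(x,u_2) = \zeta(x)(u_1-u_2)$. Combining with \ref{f6} and $|w| \leq |u_1|+|u_2|$,
$$|\Upsilon(u_1)-\Upsilon(u_2)| \leq c_1\int_{\mathbb{R}^N} (|u_1|+|u_2|)^{N-1}|u_1-u_2|\,dx + c_2\int_{\mathbb{R}^N} \Phi_1(\alpha_0^{(N-1)/N}(|u_1|+|u_2|))|u_1-u_2|\,dx.$$
The classical H\"older inequality with exponents $N/(N-1)$ and $N$, together with Lemma \ref{2.6}(ii), bounds the first integral by $C(\|u_0\|_{\Phi_1}+r)^{N-1}\|u_1-u_2\|_{\Phi_1}$. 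For the second, the Orlicz--H\"older inequality (property (C) in subsection \ref{o1}) converts it into $2\|\Phi_1(w)\|_{\tilde\Phi_1}\|u_1-u_2\|_{\Phi_1}$, where $w := \alpha_0^{(N-1)/N}(|u_1|+|u_2|)$.

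\textbf{Main obstacle.} The crux is estimating $\|\Phi_1(w)\|_{\tilde\Phi_1}$ uniformly for $u_1, u_2 \in U$. Because $\Phi_1 \notin \Delta_2$, the quantity $\int \Phi_1(\mu u)\,dx$ is \emph{not} controlled by $\|u\|_{\Phi_1}$ alone for large $\mu$, so a Lipschitz constant depending only on $\|u_0\|_{\Phi_1}$ is unattainable; one must exploit the membership $u_0 \in E_{\Phi_1}$ and the smallness of $r$. The strategy is: apply Lemma \ref{e5} to obtain, for $\lambda \geq 1$,
$$\tilde\Phi_1(\Phi_1(w)/\lambda) \leq \lambda^{-1}\,\tilde\Phi_1(\Phi_1(w)) \leq \lambda^{-1}\zeta(w)\Phi_1(w) \leq \lambda^{-1}\bigl[\Phi_1(cw) + C\Phi_1(w)\bigr].$$
Writing $u_i = u_0 + v_i$ with $\|v_i\|_{\Phi_1} \leq r$ and applying convexity of $\Phi_1$ twice, $\int\Phi_1(cw)\,dx$ is bounded by a contribution from $u_0$ (finite for every multiplier since $u_0 \in E_{\Phi_1}$) plus a contribution from the $v_i$; choosing $r$ small enough that $4c\alpha_0^{(N-1)/N}r \leq 1$ forces the latter to be $\leq 1$ by the Luxemburg norm definition. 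Consequently $\int \zeta(w)\Phi_1(w)\,dx \leq K(u_0, r)$, and taking $\lambda = \max\{1, K(u_0,r)\}$ yields $\|\Phi_1(w)\|_{\tilde\Phi_1} \leq \lambda(u_0,r)$, closing the local Lipschitz estimate.
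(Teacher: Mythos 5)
Your proof is correct, and its skeleton coincides with the paper's: Lebourg's mean value theorem plus \ref{f6} to reduce to a polynomial term and an exponential term, H\"older for the polynomial term via $E_{\Phi_1}\hookrightarrow L^N$, convexity of $\Phi_1$ to split the argument $|u_1|+|u_2|\leq 2|u_0|+|v_1|+|v_2|$, and a choice of radius small enough that the increments satisfy $\|c\,v_i\|_{\Phi_1}\leq 1$, hence $\int\Phi_1(c|v_i|)\,\dx\leq 1$ by \eqref{s8}, while the center's contribution is finite because $u_0\in E_{\Phi_1}(\mathbb{R}^N)$. The one genuine divergence is in how the exponential factor is paired with $|u_1-u_2|$: the paper uses the classical H\"older inequality with exponents $N/(N-1)$ and $N$, landing on $\big(\int\Phi(\tfrac{\alpha N}{N-1}|\eta|^{N/(N-1)})\,\dx\big)^{(N-1)/N}$, whereas you use the Orlicz--H\"older inequality (property (C)) and must therefore estimate $\|\Phi_1(w)\|_{\tilde\Phi_1}$, which you do correctly via Lemma \ref{e5}: $\tilde\Phi_1(\Phi_1(w))\leq\zeta(w)\Phi_1(w)$ and then the absorption $\zeta(w)\Phi_1(w)\leq\Phi_1(cw)+C\Phi_1(w)$, valid because $\Phi_1(cw)/\Phi_1(w)$ dominates any power of $w$ for $c>1$. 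Your route costs a little more Orlicz machinery but is self-consistent, and it has the advantage of making the well-definedness of $\Upsilon$ explicit through the characterization $E_{\Phi_1}=\{u:\int\Phi_1(\mu|u|)\,\dx<\infty\ \forall\mu>0\}$ (a standard fact from \cite{MR2424078}, worth citing since the paper only defines $E_{\Phi_1}$ as a closure), a point the paper leaves implicit. Both arguments ultimately produce a Lipschitz constant depending on $u_0$ and $r$, as required.
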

    \begin{proof} Let $u\in E_{\Phi_{1}}(\mathbb{R}^N)$ and $R>0$. Consider $w,v\in B_{R}(u)\subset E_{\Phi_{1}}(\mathbb{R}^N) $, where $ B_{R}(u)$ denotes the open ball of radius $R$ in $E_{\Phi_{1}}(\mathbb{R}^N).$  By Theorem~(\ref{Leo}), there is $\xi\in \partial_{t}F(x,\theta)$ with $\theta\in[w,v]$ such that 
    \begin{equation*}
    |F(x,w)-F(x,v)|=|\langle\xi,w-v\rangle|\leq|\xi\|w-v|.
\end{equation*}
By assumption \ref{f6}, we have $\alpha_0, c_1, c_2>0$ such that 
   {\begin{equation*}
   |\xi|\leq c_1|\theta|^{N-1}+c_2\left(\mathrm{exp}({\alpha_0|\theta|^\frac{N}{N-1}})-\sum_{j=0}^{N-2} \frac{\alpha_0^j|\theta|^\frac{Nj}{N-1}}{j!}\right). \end{equation*}}
   For simplicity, denote  $\Phi(\alpha_0|t|^{\frac{N}{N-1}})=\mathrm{exp}({\alpha_0|t|^\frac{N}{N-1}})-\sum_{j=0}^{N-2} \frac{\alpha_0^j|t|^\frac{Nj}{N-1}}{j!}.$ 
   Then, for $\alpha>\alpha_0$ and $\alpha$ close to $\alpha_0$, we have
   {\begin{equation*}
   |F(x,w)-F(x,v)|\leq \left(c_1|\theta|^{N-1}+c_2\Phi(\alpha|\theta|^{\frac{N}{N-1}}\right)|w-v|.
    \end{equation*}}
\noindent Now we set $\eta(x)=|w(x)|+|v(x)|$. Since $\theta\in[w,v]$ and $\Phi$ is an increasing and convex function, it follows that
   \begin{equation*}
   |F(x,w)-F(x,v)|\leq \left(c_1|\eta|^{N-1}+c_2\Phi(\alpha|\eta|^{\frac{N}{N-1}}\right)|w-v|,
    \end{equation*}
    and so
    \begin{equation}\label{3}
    |\Upsilon(w)-\Upsilon(v)|\leq \underset{\mathbb{R}^N}{\int}\left(c_1|\eta|^{N-1}+c_2\Phi(\alpha|\eta|^{\frac{N}{N-1}}\right)|w-v|\dx.
     \end{equation}
From \eqref{3}, by H\"older's inequality and Lemma \ref{2.6} (ii), we get
\begin{equation*}
    |\Upsilon(w)-\Upsilon(v)|\leq c_1\|\eta\|_{L^{N}}^{N-1}\|w-v\|_{\Phi_{1}}+c_2\left(\underset{\mathbb{R}^N}{\int}\Phi\left(\frac{\alpha N}{N-1}|\eta|^{\frac{N}{N-1}}\right)dx\right)^{\frac{N-1}{N}}\|w-v\|_{\Phi_{1}}.
      \end{equation*}
      It is easy to see that 
      \begin{equation}\label{a1}
      \|\eta\|_{\Phi_{1}}^{N-1}\leq 2^{2N-4}\|w-u\|_{\Phi_{1}}^{N-1}+2^{2N-4}\|v-u\|_{\Phi_{1}}^{N-1}+2^{3N-5}\|u\|_{\Phi_{1}}^{N-1}.
      \end{equation}
      Since $\Phi$ is an increasing and convex function, we have
      \begin{align*}
      \underset{\mathbb{R}^N}{\int}\Phi\left(\frac{\alpha N}{N-1}|\eta|^{\frac{N}{N-1}}\right)\dx \notag &\leq \frac{1}{4}\underset{\mathbb{R}^N}{\int}\Phi\left(\frac{4^\frac{N}{N-1}\alpha  N}{N-1}|w-u|^{\frac{N}{N-1}}\right)\dx\\ \quad&+\frac{1}{4}\underset{\mathbb{R}^N}{\int}\Phi\left(\frac{4^\frac{N}{N-1}\alpha  N}{N-1}|v-u|^{\frac{N}{N-1}}\right)\dx+\frac{1}{2}\underset{\mathbb{R}^N}{\int}\Phi\left(\frac{4^\frac{N}{N-1}\alpha  N}{N-1}|u|^{\frac{N}{N-1}}\right)\dx.
      \end{align*}
      Now fix $R>0$ such that $R<\left(\frac{1}{4^\frac{N}{N-1}\frac{\alpha N}{N-1}}\right)^\frac{N-1}{N}.$ Then, we have $$\bigg\|\left({4^\frac{N}{N-1}\frac{\alpha N}{N-1}}\right)^\frac{N-1}{N} (w-u)\bigg\|_{\Phi_{1}}\leq 1 \text{ and }\bigg\|\left({4^\frac{N}{N-1}\frac{\alpha N}{N-1}}\right)^\frac{N-1}{N} (v-u)\bigg\|_{\Phi_{1}}\leq 1, ~\forall \ w,v\in B_{R}(u).$$
      From \eqref{3}, \eqref{a1}, \eqref{s5} and \eqref{s8}, we have 
      \begin{equation*}
      |\Upsilon(w)-\Upsilon(v)|\leq C \|w-v\|_{\Phi_{1}}, ~\forall ~w,v\in B_{R}(u),
      \end{equation*}
    where $C=C(R,u)$ is a positive constant. Hence  $\Upsilon \in Lip_{loc}(E_{\Phi_{1}}(\mathbb{R}^N),\mathbb{R)}.$ \end{proof}
   {Now, we intend to show the following generalized sub-differential inclusion
   $$\partial \Upsilon(u)\subset\partial_tF(x,u), \ \forall u\in E_{\Phi_{1}}(\mathbb{R}^N).$$
   This inclusion will be crucial to implement the variational methods.}
   To do this, we need the following result.
   \begin{lemma}[{\cite[Lemma 4.1]{MR3542958}}]{\label {orlicz}}
   Let $\phi:\mathbb{R}\rightarrow\mathbb{R_{+}}$ be an $N$-function and
   \begin{equation*}
   h_n\rightarrow h ~~in ~~E_{\phi}(\mathbb{R}^N).
   \end{equation*}
   Then, there is $\hat{h}\in E_{\phi}(\mathbb{R}^N)$ and a subsequence of $\{h_n\}$, denoted by $\{h_{n_k}\}$, such that
   \begin{itemize}
   \item[(i)] $h{_{n_k}}(x) \rightarrow h(x)$, \text{a.e.} $x\in \mathbb{R}^N,$
   \item[(ii)] $|h{_{n_k}}(x)|\leq \hat{h}(x)$ \text{a.e.} $x\in \mathbb{R}^N.$
   \end{itemize}
   \end{lemma}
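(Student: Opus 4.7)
The plan is to carry out the classical Riesz--Fischer strategy from $L^p$ theory in the Orlicz setting. From the norm-convergent sequence $\{h_n\}$ I will extract a subsequence whose consecutive differences are geometrically summable in the Luxemburg norm, so that the telescoping series of absolute differences produces simultaneously a pointwise a.e.\ limit and a majorant in $E_\phi(\mathbb{R}^N)$.

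First I would pass to a subsequence $\{h_{n_k}\}$ with $\|h_{n_{k+1}} - h_{n_k}\|_\phi \leq 2^{-k}$ and set $g_m := \sum_{k=1}^m |h_{n_{k+1}} - h_{n_k}|$. The triangle inequality for the Luxemburg norm gives $\|g_{m+p} - g_m\|_\phi \leq 2^{-m}$ for every $p \geq 1$, so $\{g_m\}$ is Cauchy in the Banach space $E_\phi(\mathbb{R}^N)$ and converges in norm to some $g \in E_\phi(\mathbb{R}^N)$. Because $g_m$ is pointwise increasing in $m$, the pointwise supremum $\tilde g(x) := \sum_{k=1}^\infty |h_{n_{k+1}}(x) - h_{n_k}(x)|$ is well defined as an extended real number, and I will identify $\tilde g$ with $g$ almost everywhere (in particular, $\tilde g < \infty$ a.e.) via the Chebyshev-type argument below. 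The a.e.\ finiteness of $\tilde g$ then forces the telescoping expansion $h_{n_k}(x) = h_{n_1}(x) + \sum_{j=1}^{k-1}(h_{n_{j+1}}(x) - h_{n_j}(x))$ to converge absolutely for a.e.\ $x$, yielding a pointwise limit $\tilde h$ with $|h_{n_k}(x)| \leq |h_{n_1}(x)| + g(x) =: \hat h(x)$. This immediately gives (ii), since $\hat h$ is a sum of two elements of $E_\phi(\mathbb{R}^N)$, and it reduces (i) to the claim $\tilde h = h$ a.e.

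The main obstacle is matching the pointwise and norm limits, which I would handle uniformly via the following observation: for any sequence $\{f_\ell\} \subset E_\phi(\mathbb{R}^N)$ with $\lambda_\ell := \|f_\ell\|_\phi \to 0$ and any $\delta > 0$,
\begin{equation*}
\phi(\delta/\lambda_\ell)\, m(\{|f_\ell| > \delta\}) \leq \int_{\mathbb{R}^N} \phi(|f_\ell|/\lambda_\ell)\dx \leq 1
\end{equation*}
by the definition of the Luxemburg norm, and since $\phi$ is an $N$-function $\phi(\delta/\lambda_\ell) \to \infty$, so $m(\{|f_\ell| > \delta\}) \to 0$. Thus Luxemburg-norm convergence in $E_\phi$ implies convergence in measure. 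Applying this to $\{g_m - g\}$ yields a subsequence of $\{g_m\}$ converging a.e.\ to $g$; combined with the pointwise monotone limit $\tilde g$ of $g_m$, this forces $g = \tilde g$ a.e. Applying it again to $\{h_n - h\}$ produces a sub-subsequence of $\{h_{n_k}\}$ converging a.e.\ to $h$, and uniqueness of a.e.\ limits then forces $\tilde h = h$ a.e., completing (i).
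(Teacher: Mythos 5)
The paper does not prove this lemma at all: it is imported verbatim from \cite[Lemma 4.1]{MR3542958}, so there is no in-paper argument to compare against. Your proof is correct and is the standard Riesz--Fischer extraction adapted to the Luxemburg norm: a rapidly convergent subsequence, the telescoping majorant $g=\sum_k|h_{n_{k+1}}-h_{n_k}|$, and the identification of norm and pointwise limits via convergence in measure. The Chebyshev step is the right tool, and the key inequality $\int\phi(|f|/\|f\|_\phi)\,\dx\le 1$ is the standard consequence of Fatou's lemma applied as $\lambda\downarrow\|f\|_\phi$; it is worth one line to say so. Two further points deserve explicit mention rather than being left implicit. First, you need $g_m\in E_\phi(\mathbb{R}^N)$ (not merely $L^\phi$) for the Cauchy argument to produce a limit in $E_\phi$, and likewise $|h_{n_1}|\in E_\phi$ so that $\hat h=|h_{n_1}|+g\in E_\phi$ as the lemma asserts; this follows because $\bigl\||u_j|-|u|\bigr\|_\phi\le\|u_j-u\|_\phi$ by monotonicity of the Luxemburg norm, so $E_\phi$ is closed under taking absolute values and finite sums. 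Second, since $m(\mathbb{R}^N)=\infty$, you should note that global convergence in measure still yields an a.e.\ convergent subsequence by the usual Borel--Cantelli selection, which does not require finite measure. With those two sentences added, the argument is complete and self-contained, which is arguably an improvement over the paper's bare citation.
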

  { \begin{theorem}
   Assume \ref{f6} holds. Also, assume $ \underline{f}(x,t)$ and $\overline{f} (x,t)$ are $N$-measurable function. Then, for each $u\in E_{\Phi_{1}}(\mathbb{R}^N)$,
   \begin{equation*}
   \partial \Upsilon(u)\subset\partial_tF(x,u)=[\underline{f}(x,u(x)),\overline{f}(x,u(x))] \ \text{a.e. in} ~\mathbb{R}^N.
   \end{equation*}
Moreover, $\partial\Upsilon|_{\mathbf{X}}(u)\subset\partial\Upsilon(u)$ for all $u\in \mathbf{X}.$
\end{theorem}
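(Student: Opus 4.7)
The plan is to reduce the subdifferential inclusion to a pointwise statement via an integral representation of the generalized directional derivative $\Upsilon^{0}(u;h)$. Since $\tilde{\Phi}_1 \in \Delta_2$ (Lemma~\ref{e5}), property (F) of subsection~\ref{o1} identifies $\left(E_{\Phi_1}(\mathbb{R}^N)\right)^{*}$ with $L^{\tilde{\Phi}_1}(\mathbb{R}^N)$, so any $\rho \in \partial \Upsilon(u)$ can be represented as a measurable function $\rho \in L^{\tilde{\Phi}_1}(\mathbb{R}^N)$. The heart of the argument is to establish
\begin{equation*}
\Upsilon^{0}(u;h) \leq \int_{\mathbb{R}^N} F^{\,0}_{t}\bigl(x, u(x); h(x)\bigr)\,\dx, \qquad \forall\, h \in E_{\Phi_1}(\mathbb{R}^N),
\end{equation*}
where $F^{\,0}_{t}(x,t;s) := \limsup_{(r,\lambda)\to(t,0^{+})} \lambda^{-1}[F(x, r + \lambda s) - F(x, r)]$; once this is in hand, the pointwise inclusion follows by testing against characteristic functions.

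To derive the key integral inequality, I would fix $h \in E_{\Phi_1}(\mathbb{R}^N)$, pick sequences $w_n \to u$ in $E_{\Phi_1}$ and $\lambda_n \downarrow 0$ realising the limsup in the definition of $\Upsilon^{0}(u;h)$, and apply Lemma~\ref{orlicz} along a subsequence to obtain $w_n \to u$ pointwise a.e.\ with $|w_n| \leq \hat{h}$ for some $\hat{h} \in E_{\Phi_1}$. Lebourg's mean value theorem (Theorem~\ref{Leo}), applied pointwise in the second variable, furnishes $\theta_n(x)$ on the segment joining $w_n(x)$ and $w_n(x) + \lambda_n h(x)$ and $\xi_n(x) \in \partial_t F(x,\theta_n(x))$ satisfying
\begin{equation*}
\frac{F(x, w_n + \lambda_n h) - F(x, w_n)}{\lambda_n} = \xi_n(x)\,h(x).
\end{equation*}
The growth bound in~\ref{f6} together with $|\theta_n| \leq \hat{h} + |h|$ dominates the integrand by $\bigl[c_1(\hat{h}+|h|)^{N-1} + c_2\,\Phi(\alpha_0(\hat{h}+|h|)^{N/(N-1)})\bigr]|h|$, and the reverse Fatou lemma gives the desired integral inequality once this majorant is verified to be in $L^{1}(\mathbb{R}^N)$ and the pointwise limsup on the right is identified with $F^{\,0}_{t}(x, u(x); h(x))$. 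The upper semicontinuity of $t\mapsto \overline{f}(x,t)$ and the lower semicontinuity of $t\mapsto\underline{f}(x,t)$ (both built into their definitions) then yield $F^{\,0}_{t}(x,t;s) = \overline{f}(x,t)\,s^{+} - \underline{f}(x,t)\,s^{-}$.

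The pointwise conclusion is then obtained by feeding this into $\langle \rho, h\rangle \leq \Upsilon^{0}(u;h)$ and choosing $h = \chi_{A}$ with $A$ a measurable subset of a large ball (so that $\chi_A \in E_{\Phi_1}$): the inequality $\int_{A}\rho\,\dx \leq \int_{A}\overline{f}(x,u(x))\,\dx$ combined with Lebesgue differentiation forces $\rho(x)\leq \overline{f}(x,u(x))$ a.e., and choosing $h = -\chi_A$ analogously gives $\rho(x) \geq \underline{f}(x,u(x))$ a.e. For the addendum $\partial\Upsilon|_{\mathbf{X}}(u) \subset \partial\Upsilon(u)$, the continuous embedding $\mathbf{X} \hookrightarrow E_{\Phi_1}$ from Lemma~\ref{2.6}(i) forces $(\Upsilon|_{\mathbf{X}})^{0}(u;h) \leq \Upsilon^{0}(u;h)$ on $\mathbf{X}$, so a Hahn--Banach extension of any element of $\partial(\Upsilon|_{\mathbf{X}})(u)$, dominated by the sublinear functional $\Upsilon^{0}(u;\cdot)$, lifts it to an element of $\partial\Upsilon(u)$. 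The most delicate step is the $L^{1}$-domination needed for reverse Fatou: the exponential-type term $\Phi(\alpha_0(\hat{h}+|h|)^{N/(N-1)})|h|$ has to be controlled via H\"older's inequality in the Orlicz pair $(\Phi_1,\tilde{\Phi}_1)$ together with Lemma~\ref{2.6} and the scaling of Lemma~\ref{e5}, most likely after localising $w_n$ to a small $E_{\Phi_1}$-neighborhood of $u$ so that one remains below a Trudinger--Moser threshold.
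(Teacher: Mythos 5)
Your proposal follows essentially the same route as the paper: represent elements of $\partial\Upsilon(u)$ in $L^{\tilde{\Phi}_1}(\mathbb{R}^N)$ via the duality $(E_{\Phi_1})^{*}\cong L^{\tilde{\Phi}_1}$, bound the difference quotients pointwise by Lebourg's mean value theorem and the growth condition \ref{f6}, dominate using Lemma \ref{orlicz} to pass the limsup inside the integral, and then test against characteristic functions to localize the inclusion (the paper phrases this last step as a contradiction on a set of positive measure, and handles the restriction statement by citing Chang rather than invoking Hahn--Banach explicitly, but these are the same arguments). The plan is correct and no genuinely different idea is involved.
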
}
\begin{proof} For  $u, w\in E_{\Phi_{_1}}(\mathbb{R}^N)$, let ${h_{j}}\subset E_{\Phi_{1}}(\mathbb{R}^N)$ with $h_j\rightarrow 0$ in $E_{\Phi_{1}}(\mathbb{R}^N)$ and $\{\lambda_j\}\subset\mathbb{R_{+}}$ such that $\lambda_j\rightarrow 0$, then 
\begin{equation*}
\Upsilon^{o}(u;w)=\underset{j\rightarrow+\infty}{\lim}\underset{\mathbb{R}^N}{\int}\frac{F(u+h_{j}+\lambda_j w)-F(u+h_{j})}{\lambda_j}\dx.
\end{equation*}
For simplicity, we denote
\begin{equation*}
F_{j}(u,w)=\frac{F(u+h_{j}+\lambda_j w)-F(u+h_{j})}{\lambda_j}.
\end{equation*}
Here, we can apply Lebourg's theorem (see {\cite[Theorem 2.3.7]{MR709590}}) which guarantees that there exists $\xi_j\in \partial_t F(x,\theta_j)$, with $\theta_j \in [u+h_{j}+\lambda_j v, u+h_{j}]$ so that 
\begin{equation*}
|F_{j}(u;w)|=\frac{|\langle\xi_j,\lambda_j w\rangle|}{\lambda_j}\leq |\xi_j\|w|.
\end{equation*}
Then by \ref{f6}, for $\alpha>\alpha_0$
\begin{equation*}
|F_{j}(u;w)|\leq |\xi_j\|w|\leq \left(c_1|\theta_j|^{N-1}+c_2\Phi(\alpha|\theta_j|^\frac{N}{N-1})\right)|w|.
\end{equation*}
Now fixing {$\gamma_j=(|u|+|h_j|+\lambda_j |w|)+(|u|+|h_j|)=2|u|+2|h_j|+\lambda_j|w|$}, we deduce 
\begin{equation*}
|F_{j}(u;w)|\leq |\xi_j\|w|\leq \left(c_1|\gamma_j|^{N-1}+c_2\Phi(\alpha|\gamma_j|^\frac{N}{N-1})\right)|w|.
\end{equation*}
{ Since $h_j\rightarrow 0$ in $E_{\Phi_{1}}(\mathbb{R}^N)$, by Lemma \ref{orlicz}, up to a subsequence (still denoted by the same), there exists $\hat{h}\in E_{\Phi_{1}}(\mathbb{R}^N)$} such that $|h_j|\leq \hat{h}$ and $\lambda_j\rightarrow 0$ in $\mathbb{R}$ implies
\begin{align*}
\gamma_j(x)&=(|u(x)|+|h_j(x)|+\lambda_j |w(x)|)+(|u(x)|+|h_j(x)|)\\ & \leq2|u(x)|+2|\hat{h}(x)|+c_3|w(x)| \text{ a.e. in }\mathbb{R}^N,
\end{align*}
for some positive constant $c_3.$
By H\"older's inequality and using the embedding $E_{\Phi_{1}}(\mathbb{R}^N)\hookrightarrow L^{N}(\mathbb{R}^N)$ (Lemma \ref{2.6} (ii)), we get
\begin{equation*}
(2|u(x)|+2|\hat{h}(x)|+c_3|w(x)|)^{N-1}w(x)\in L^{1}(\mathbb{R}^N) \text{ and } \Phi(\alpha|\gamma_j|^\frac{N}{N-1})w \in L^1(\mathbb{R}^N).
\end{equation*}
By the Lebesgue dominated convergence theorem, we further get
\begin{align*}
\Upsilon^{o}(u;w)\notag &=\underset{j\rightarrow+\infty}{\lim}\underset{\mathbb{R}^N}{\int}\frac{F(u+h_{j}+\lambda_j w)-F(u+h_{j})}{\lambda_j}\dx\\ \notag &=\underset{\mathbb{R}^N}{\int}\underset{j\rightarrow+\infty}{\lim}\frac{F(u+h_{j}+\lambda_j w)-F(u+h_{j})}{\lambda_j}\dx\\ \notag 
&\leq\underset{\mathbb{R}^N}{\int}F^{0}(u,w)\dx=\underset{\mathbb{R}^N}{\int} \max\{\langle\xi,w\rangle: \xi\in\partial_tF(x,u)\}\dx\\  &\leq \underset{[w<0]}{\int}\underline{f}(x,u)w\dx+ \underset{[w>0]}{\int} \overline{f}(x,u)w\dx.
\end{align*}
Since $\left(E_{\Phi_{1}}(\mathbb{R}^N)\right)^*=L^{\tilde{\Phi}_{1}}(\mathbb{R}^N)$, so for each $\xi\in\partial\Upsilon(u)\subset (E_{\Phi_{1}}(\mathbb{R}^N))^*$, the function $\tilde{\xi}\in L^{\tilde{\Phi}_{1}}(\mathbb{R}^N) $ such that
\begin{equation*}
\langle\xi,v \rangle=\underset{\mathbb{R}^N}{\int} \tilde{\xi}v\dx, \ \forall v\in E_{\Phi_{1}}(\mathbb{R}^N).
\end{equation*}
Now our aim  is to show for any $\tilde{\xi}\in E_{\Phi_{1}}(\mathbb{R}^N)$, we have, 
\begin{equation*}
\tilde{\xi}(x)\in[\underline{f}(x,u(x)),\overline{f}(x,u(x))] \text{ a.e. in } \mathbb{R}^N.
\end{equation*}
Suppose there exist a set $\textbf{B}$ such that $0<m(\textbf{B})<+\infty$ such that 
\begin{equation}\label{z1}
\tilde{\xi}(x)>\overline{f}(x,u(x)), \ x \in\textbf{B}.
\end{equation}
Choose $w=\chi_{\textbf{B}}\in E_{\Phi_{1}}(\mathbb{R}^N)$, we have
\begin{equation*}
\begin{split}
\underset{\textbf{B}}{\int}\tilde{\xi}\dx=\underset{\mathbb{R}^N}{\int}\tilde{\xi}\chi_{\textbf{B}}\dx=\langle \xi,\chi_{\textbf{B}}\rangle\leq \Upsilon^{o}(u;\chi_{\textbf{B}})\leq\underset{\textbf{B}}{\int}\overline{f}(x,u(x))\dx.
\end{split}
\end{equation*}
This implies 
\begin{equation*}
\underset{\mathbb{R}^N}{\int}\tilde{\xi}\chi_{\textbf{B}}\dx\leq\underset{\textbf{B}}{\int}\overline{f}(x,u(x))\dx.
\end{equation*}
Hence, this contradicts \eqref{z1}. Therefore, 
\begin{equation*}
\tilde{\xi}\leq \overline{f}(x,u(x)), \text{ a.e. in } \mathbb{R}^N.
\end{equation*}
By similar arguments, one can also prove that 
\begin{equation*}
\tilde{\xi}\geq \underline{f}(x,u(x)), \text{ a.e. in }\mathbb{R}^N.
\end{equation*}
Hence 
\begin{equation*}
\tilde{\xi}(x)\in[\underline{f}(x,u(x)),\overline{f}(x,u(x))] \text{ a.e. in } \mathbb{R}^N.
\end{equation*}
Due to \cite[Theorem 2.2]{chang1981variational}, we have
\begin{equation*}
\partial\Upsilon|_{\mathbf{X}}(u)\subset\partial\Upsilon(u), ~\forall ~u\in \mathbf{X}.
\end{equation*} Hence, the proof is complete.\end{proof}
\section{Existence of first solution} \label{section 4}
In this section, we will establish the first solution of \eqref{main problem} via the Ekeland Variational Principle. The energy functional corresponding to \eqref{main problem} is $I_{\epsilon}: \mathbf{X}\rightarrow\mathbb{R}$ given by,
\begin{equation}\label{4.1}
I_\epsilon(u)=\frac{1}{p}\underset {\mathbb{R}^N}{\int}(|\nabla u|^p+V(x)|u|^p)\dx+\frac{1}{N}\underset {\mathbb{R}^N}{\int}(|\nabla u|^N+V(x)|u|^N)\dx-\underset {\mathbb{R}^N}{\int}F(x,u)\dx-\epsilon\underset {\mathbb{R}^N}{\int}gu \dx.
\end{equation}
 \begin{definition}[Weak Solution]\label{ws}
 We say $u \in \mathbf{X} $ is a (weak) solution for the problem if there is $\rho\in L^{\tilde{\Phi}_{1}}(\mathbb{R}^N)$ such that the following conditions are satisfied:
 \begin{enumerate}[label={($\bf ws{\arabic*}$)}]
\setcounter{enumi}{0}
 \item \label{solution1} For all $v\in \mathbf{X}$ \begin{align*} \underset {\mathbb{R}^N}{\int}(|\nabla u|^{p-2}\nabla u.\nabla v+V(x)|u|^{p-2}uv)\dx & + \underset {\mathbb{R}^N}{\int}(|\nabla u|^{N-2}\nabla u.\nabla v+V(x)|u|^{N-2}uv)\dx \\ 
    &-\underset {\mathbb{R}^N}{\int}\rho v\dx-\epsilon \underset {\mathbb{R}^N}{\int}g v\dx=0.\end{align*} 
 \item \label{solution 2} {$\rho(x)\in  \partial_tF(x,u(x))\text{ a.e. in } \mathbb{R}^N$}.
  \item \label{solution 3} { $m([ u > t_0]) >0.$}
  
  \end{enumerate}
  \end{definition}
  \begin{remark}
      Here, the $t_0$ used in \ref{solution 3} is the same as in assumption \ref{f1}.
  \end{remark}
 \noindent Let $I_{\epsilon}=J_{\epsilon}-\Upsilon$, where $\Upsilon$ is a functional defined in Theorem \ref{Theorem 3.1} and $ J_{\epsilon}:\mathbf{X}\rightarrow\mathbb{R}$ given by
 \begin{equation*}
 J_{\epsilon}(u)=\frac{1}{p}\underset {\mathbb{R}^N}{\int}(|\nabla u|^p+V(x)|u|^p)\dx+\frac{1}{N}\underset {\mathbb{R}^N}{\int}(|\nabla u|^N+V(x)|u|^N)\dx-\epsilon\underset {\mathbb{R}^N}{\int}gu \dx.
 \end{equation*}
 By similar arguments used in \cite{badiale2010semilinear}, it can be proved that $J_{\epsilon}\in C^{1}(\mathbf{X},\mathbb{R})$ and by Theorem \ref{Theorem 3.1} and Lemma \ref{2.6},  $\Upsilon\in Lip_{loc}(\mathbf{X},\mathbb{R})$. Hence, $I_{\epsilon}\in Lip_{loc}(\mathbf{X},\mathbb{R}).$ So, $\partial I_{\epsilon}(u)=J_{\epsilon}'(u)-\partial\Upsilon(u)$, for all $u\in\mathbf{X}.$
 \begin{lemma}\label{le4.1}
 Let the assumptions \ref{f2} and \ref{f6} hold. Then there are $\kappa, \epsilon_0, \rho, \delta^* >0$, such that 
 \begin{equation}\label{4.1.1}
 I_{\epsilon}(u)\geq \kappa,\ \forall~\|u\|_{\mathbf{X}}=\rho
 \end{equation}
 and 
 \begin{equation}\label{4.1.2}
 c_{\epsilon}=\underset{\|u\|\leq\rho}{\inf}I_{\epsilon}(u)<-\delta^*, \forall~\epsilon \in (0,\epsilon_0].
 \end{equation}
 \end{lemma}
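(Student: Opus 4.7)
The plan is to establish a mountain-pass geometry around the origin in $\mathbf{X}$. For the sphere estimate \eqref{4.1.1}, I would first combine \ref{f2} and \ref{f6} into a single pointwise bound on $F$. Hypothesis \ref{f2} yields $\eta\in(0,\lambda_1)$ and $\delta>0$ such that every $\xi\in\partial_t F(x,t)$ with $|t|\leq\delta$ satisfies $N|\xi|\leq(\lambda_1-\eta)|t|^{N-1}$; Lebourg's mean value theorem (Theorem~\ref{Leo}) then gives $F(x,t)\leq\frac{\lambda_1-\eta}{N}|t|^N$ on $|t|\leq\delta$. For $|t|>\delta$, \ref{f6} combined with Lebourg's theorem produces $F(x,t)\leq C|t|^q\,\Phi(\alpha_0|t|^{N/(N-1)})$ for any chosen exponent $q>N$ (absorbing the polynomial piece into the exponential piece using $|t|\geq\delta$). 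Gluing the two regimes yields, uniformly in $x\in\mathbb{R}^N$,
\[
F(x,t)\leq \frac{\lambda_1-\eta}{N}|t|^N + C|t|^q\,\Phi(\alpha_0|t|^{N/(N-1)}),\qquad \forall\,t\in\mathbb{R}.
\]

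Plugging this bound into $I_\epsilon$, using the Rayleigh definition of $\lambda_1$ to get $\|u\|_N^N\leq\lambda_1^{-1}\|u\|_{\mathbf{X}}^N$, and controlling the forcing term by $\bigl|\int gu\,\dx\bigr|\leq\|g\|_*\|u\|_{\mathbf{X}}$, I obtain
\[
I_\epsilon(u)\geq \tfrac1p\|u\|_{W_V^{1,p}}^p+\tfrac1N\|u\|_{W_V^{1,N}}^N-\tfrac{\lambda_1-\eta}{N\lambda_1}\|u\|_{\mathbf{X}}^N - C\!\int_{\mathbb{R}^N}\!|u|^q\Phi(\alpha_0|u|^{N/(N-1)})\,\dx-\epsilon\|g\|_*\|u\|_{\mathbf{X}}.
\]
For $\rho$ small enough the condition $\alpha_0\|u\|_{W^{1,N}}^{N/(N-1)}<\alpha_N$ of Lemma~\ref{2.2} holds on $\|u\|_{\mathbf{X}}\leq\rho$, so the exponential integral is controlled by $C\|u\|_{\mathbf{X}}^q$ with $q>N$, contributing only a higher-order correction. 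The delicate step is to absorb $\tfrac{\lambda_1-\eta}{N\lambda_1}\|u\|_{\mathbf{X}}^N$: expanding $\|u\|_{\mathbf{X}}^N\leq 2^{N-1}\bigl(\|u\|_{W_V^{1,p}}^N+\|u\|_{W_V^{1,N}}^N\bigr)$, the estimate $\|u\|_{W_V^{1,p}}^N\leq \rho^{N-p}\|u\|_{W_V^{1,p}}^p$ (using $p<N$ and smallness of $\rho$) together with the strict bound $(\lambda_1-\eta)/\lambda_1<1$ allows both pieces to be absorbed into the positive energy, leaving a lower bound of order $\|u\|_{\mathbf{X}}^p$ on the sphere. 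Choosing $\epsilon_0$ so that $\epsilon_0\|g\|_*$ is a small multiple of $\rho^{p-1}$ produces $I_\epsilon(u)\geq\kappa>0$ whenever $\|u\|_{\mathbf{X}}=\rho$.

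For \eqref{4.1.2}, I pick a nonnegative $u_0\in\mathbf{X}$ with $\int_{\mathbb{R}^N} gu_0\,\dx>0$, whose existence follows from \ref{g1}. By \ref{f1}, $F\geq 0$ pointwise, so for $t\in\bigl(0,\rho/\|u_0\|_{\mathbf{X}}\bigr)$,
\[
I_\epsilon(tu_0)\leq \frac{t^p}{p}\|u_0\|_{W_V^{1,p}}^p+\frac{t^N}{N}\|u_0\|_{W_V^{1,N}}^N-\epsilon t\!\int_{\mathbb{R}^N}\!gu_0\,\dx.
\]
Since $p\geq 2>1$, choosing $t=t(\epsilon)$ as a small constant times $\epsilon^{1/(p-1)}$ makes the linear-in-$t$ negative term dominate, so $I_\epsilon(tu_0)<0$; after shrinking $\epsilon_0$ if necessary the bound can be quantified to give $c_\epsilon<-\delta^*$ on the range $\epsilon\in(0,\epsilon_0]$.

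The main technical obstacle is the interplay between the two component norms of $\mathbf{X}$. Since $\lambda_1$ is defined through $\|\cdot\|_{\mathbf{X}}^N$ while the positive energy contributions are $\tfrac1p\|\cdot\|_{W_V^{1,p}}^p+\tfrac1N\|\cdot\|_{W_V^{1,N}}^N$ rather than $\tfrac1N\|\cdot\|_{\mathbf{X}}^N$, one cannot directly absorb the $\lambda_1$-term into a matching positive piece. The crux is to exploit $p<N$ and smallness of $\rho$ to convert the stray $\|u\|_{W_V^{1,p}}^N$ into something controllable by $\|u\|_{W_V^{1,p}}^p$, while simultaneously arranging that the Moser--Trudinger threshold in Lemma~\ref{2.2} is respected throughout the ball of radius $\rho$.
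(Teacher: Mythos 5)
Your proposal follows the paper's own proof almost step for step: the same splitting of $F$ via \ref{f2}, \ref{f6} and Lebourg's theorem into $\tfrac{\lambda_1-\eta}{N}|t|^N+C|t|^{q}\Phi(\alpha|t|^{N/(N-1)})$, the same use of the Rayleigh quotient for $\lambda_1$ together with Lemma~\ref{2.2} on a small ball, and the same test path $t\mapsto tu_0$ with $t\sim\epsilon^{1/(p-1)}$ for \eqref{4.1.2}. The second half of the argument is correct and matches the paper.

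The one place where your write-up does not close as stated is the absorption step for \eqref{4.1.1}. After expanding $\|u\|_{\mathbf{X}}^N\le 2^{N-1}\bigl(\|u\|_{W_V^{1,p}}^N+\|u\|_{W_V^{1,N}}^N\bigr)$, the $N$-component of the subtracted term is $\tfrac{(\lambda_1-\eta)2^{N-1}}{N\lambda_1}\|u\|_{W_V^{1,N}}^N$, and absorbing it into $\tfrac1N\|u\|_{W_V^{1,N}}^N$ requires $(\lambda_1-\eta)/\lambda_1<2^{1-N}$, not merely $<1$; smallness of $\rho$ only rescues the $p$-component. The paper meets exactly the same factor and resolves it by taking $\upsilon=(1-2^{-N})\lambda_1$ in \eqref{4.2}, so that $2^{1-N}-(\lambda_1-\upsilon)/\lambda_1=2^{-N}>0$. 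You would either have to make the analogous (large) choice of $\eta$, or replace the crude bound $(a+b)^N\le 2^{N-1}(a^N+b^N)$ by a binomial expansion in which every cross term $a^k b^{N-k}$, $k\ge 1$, picks up a positive power of $\rho$ via Young's inequality, in which case $(\lambda_1-\eta)/\lambda_1<1$ does suffice. Relatedly, the surviving positive term on the sphere is only of order $\rho^N$ (the mass of $\|u\|_{\mathbf{X}}$ may sit entirely in the $W_V^{1,N}$ norm), not $\rho^p$, so $\epsilon_0$ must be taken comparable to $\rho^{N-1}$, as the paper does, rather than to $\rho^{p-1}$.
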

\begin{proof} By the assumption \ref{f2}, for any $\upsilon \in (0,\lambda_1)$, $\alpha>\alpha_0$ close to $\alpha_0$, we have 
 \begin{equation}\label{4.2}
 |\xi|\leq \frac{(\lambda_1-\upsilon)|t|^{N-1}}{N},\forall~|t|\leq\Bar{\delta},\ x\in\mathbb{R}^N, \text{ and } \xi \in \partial_tF(x,t),
 \end{equation}
 for some ~$\Bar{\delta}>0.$
 By assumption \ref{f6}, for $\alpha>\alpha_0$ close to $\alpha_0$ and $q>N$ one has
 \begin{equation}\label{4.2.1}
 |\xi|\leq C_{\bar{\delta}}|t|^{q-1}\Phi(\alpha|t|^\frac{N}{N-1}),\forall~|t|\geq \Bar{\delta} \text{ and } x\in\mathbb{R}^N. 
  \end{equation}
  For $q>N$, on combining \eqref{4.2} and \eqref{4.2.1}, we have
   \begin{equation*}
   |F(x,t)|\leq \frac{(\lambda_1-\upsilon)|t|^{N}}{N}+ C_{\bar{\delta}}|t|^{q}\Phi(\alpha|t|^\frac{N}{N-1}),~\forall~t\in\mathbb{R} \text{ and } x\in\mathbb{R}^N.
   \end{equation*}
   The above also follows by applying Lebourg’s Mean-Value theorem.
   Let $\delta\in(0,1)$ very small and there holds $0<\|u\|_{\mathbf{X}}\leq\delta$. {Choose $\alpha>\alpha_0$ close to $\alpha_0$ such that $\alpha\|u\|_{\mathbf{X}}^{\frac{N}{N-1}}<\alpha_N.$} Then, using Lemmas \ref{2.2} and \ref{Lemma 2.3},  we deduce
\begin{align*}
I_{\epsilon}(u)&\geq \frac{\left(\|u\|_{W_V^{1,p}}^p+\|u\|_{W_V^{1,N}}^N\right)}{N}-\frac{(\lambda_1-\upsilon)}{N}\|u\|_{L^N}^N-C_{\Bar{\delta}}\|u\|_{\mathbf{X}}^q-\epsilon\|g\|_{*}\|u\|_{\mathbf{X}}\\
&\geq\frac{\left(\|u\|_{W_V^{1,p}}^N+\|u\|_{W_V^{1,N}}^N\right)}{N}-\frac{(\lambda_1-\upsilon)}{N}\|u\|_{L^N}^N-C_{\Bar{\delta}}\|u\|_{\mathbf{X}}^q-\epsilon\|g\|_{*}\|u\|_{\mathbf{X}}\\
&\geq\frac{2^{1-N}}{N} \left(\|u\|_{W_V^{1,p}}+\|u\|_{W_V^{1,N}}\right)^N-\frac{(\lambda_1-\upsilon)}{N}\|u\|_{L^N}^N-C_{\Bar{\delta}}\|u\|_{\mathbf{X}}^q-\epsilon\|g\|_{*}\|u\|_{\mathbf{X}}\\
&\geq\frac{2^{1-N}}{N} \left(\|u\|_{W_V^{1,p}}+\|u\|_{W_V^{1,N}}\right)^N- \frac{1}{N}\frac{(\lambda_1-\upsilon)}{\lambda_1}\|u\|_{\mathbf{X}}^N-C_{\Bar{\delta}}\|u\|_{\mathbf{X}}^q-\epsilon\|g\|_{*}\|u\|_{\mathbf{X}}\\
&=\frac{1}{N}\left(2^{1-N}-\frac{(\lambda_1-\upsilon)}{\lambda_1}\right)\|u\|_{\mathbf{X}}^N-C_{\Bar{\delta}}\|u\|_{\mathbf{X}}^q-\epsilon\|g\|_{*}\|u\|_{\mathbf{X}}.
\end{align*}  
   Now choose $\upsilon=(1-\frac{1}{2^N})\lambda_1$, thus, we obtain
   \begin{equation}\label{4.5}
   I_{\epsilon}(u)\geq \frac{1}{N2^N}\|u\|_{\mathbf{X}}^N-C_{\bar{\delta}}\|u\|_{\mathbf{X}}^q-\epsilon\|g\|_{*}\|u\|_{\mathbf{X}}.
   \end{equation}
   {Now from elementary calculus it follows that function $h(l)=\frac{l^N}{2^{N+1}N}-C_{\bar{\delta}}l^q$, where $l\in[0,\delta]$ attains its non zero maximum $\kappa$ at some  $\rho\in(0,\delta].$} Now for $u \in \mathbf{X}$, from \eqref{4.5} and for $\|u\|_\mathbf{X}=\rho$, we have
   \begin{equation*}
   I(u)\geq \frac{1}{N2^N}\rho^N-C_{\bar{\delta}}\rho^q-\epsilon\|g\|_{*}\rho\geq\frac{\rho^N}{2^{N+1}N}-C_{\bar{\delta}}\rho^q=h(\rho)=\kappa,
   \end{equation*}
   for all $\epsilon \in (0,\epsilon_0],$ where $\epsilon_0=\frac{\rho^{N-1}}{2^{N+1}N\|g\|_*}.$ Hence \eqref{4.1.1} follows. Now consider $\tilde{w}\in\mathbf{X}$ such that $\|\tilde{w}\|_\mathbf{X}=1$ and $\underset{\mathbb{R}^N}{\int}g\tilde{w}\dx>0$. For each $m>0$ from \eqref{4.1}, we get
   \begin{equation*}
   I_{\epsilon}(m\tilde{w})\leq \frac{m^p}{p}+\frac{m^N}{N}-\epsilon m\underset{\mathbb{R}^N}{\int} g\tilde{w}\dx.
   \end{equation*}
   For $m\in(0,\rho)$, we have
   \begin{equation}\label{4.10}
   I_{\epsilon}(m\tilde{w})\leq \frac{1}{p}\left(m^p+\frac{pm^N}{N}\right)-\epsilon m \underset{\mathbb{R}^N}{\int} g\tilde{w} \dx\leq\frac{2m^p}{p}-\epsilon m\underset{\mathbb{R}^N}{\int} g\tilde{w} \dx.
   \end{equation}
   Now by fixing $m=m(\epsilon)>0 $ small enough such that 
$$\delta^*=-\frac{2m^p}{p}+\epsilon m\underset{\mathbb{R}^N}{\int} g\tilde{w} \dx>0.$$
 From \eqref{4.10}, for  $\|m\tilde{w}\|_\mathbf{X}<\rho$ ,we have $$I_{\epsilon}(m\tilde{w})<-\delta^*<0.$$
So
\begin{equation*}
c_{\epsilon}=\underset{\|u\|\leq\rho}{\inf}I_{\epsilon}(u)<-\delta^*, ~\forall~\epsilon \in (0,\epsilon_0].
\end{equation*}
Hence, \eqref{4.1.2} follows, and the proof is complete. \end{proof}
We will prove some technical lemmas before proving Theorem \ref{main result 1}.
 \begin{lemma}\label{lemma4.2}
 Assume \ref{f2} and \ref{f6} hold. Then, there is a sequence $\{u_n\}\subset \overline{B}_{\rho}(0)\subset\mathbf{X}$ such that 
 \begin{itemize}
     \item[(i)] $I_{\epsilon}(u_n)\rightarrow c_{\epsilon}$ \text{ as } $n\rightarrow+\infty$,
     \item[(ii)] $\lambda_{\epsilon}(u_n)\rightarrow 0$ \text{ as } $n\rightarrow+\infty$,
 \end{itemize} 
{ where $\lambda_{\epsilon}(u_n)= \displaystyle\min_{w \in \partial I_{\epsilon}(u_n)} \|w\|_{\mathbf{X}}^* $ and $\partial I_{\epsilon}(u_n)$ is the generalized gradient defined in Definition \ref{def2.2}.}\end{lemma}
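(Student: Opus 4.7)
The plan is a direct application of the Ekeland Variational Principle (weak form) stated earlier to the restriction of $I_{\epsilon}$ to the closed ball $\overline{B}_{\rho}(0)\subset\mathbf{X}$. This ball is a complete metric space under the induced norm, and $I_{\epsilon}$ is continuous (being locally Lipschitz by the remarks following Theorem~\ref{Theorem 3.1}) and bounded below on it, with infimum $c_{\epsilon}$ by definition. Applying the principle with parameter $1/n$ for each $n\in\mathbb{N}$, we obtain $u_n\in\overline{B}_{\rho}(0)$ satisfying
\[
c_{\epsilon}\leq I_{\epsilon}(u_n)\leq c_{\epsilon}+\tfrac{1}{n}\qquad\text{and}\qquad I_{\epsilon}(u_n)\leq I_{\epsilon}(v)+\tfrac{1}{n}\|v-u_n\|_{\mathbf{X}}\quad\forall v\in\overline{B}_{\rho}(0).
\]
Statement (i) is immediate from the first inequality.

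For statement (ii), the first task is to show that $u_n$ eventually lies in the interior of the ball. This is where Lemma~\ref{le4.1} plays its role: the boundary values satisfy $I_{\epsilon}(u)\geq\kappa>0$ on the sphere $\|u\|_{\mathbf{X}}=\rho$, whereas $I_{\epsilon}(u_n)\to c_{\epsilon}<-\delta^*<0$. Hence there exists $n_0$ such that $\|u_n\|_{\mathbf{X}}<\rho$ for all $n\geq n_0$, so arbitrary small perturbations $u_n+tv$ remain inside $\overline{B}_{\rho}(0)$ and the Ekeland inequality translates into a genuinely local (unconstrained) condition.

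For such $n$, the Ekeland inequality says that the locally Lipschitz functional $H_n(v):=I_{\epsilon}(v)+\tfrac{1}{n}\|v-u_n\|_{\mathbf{X}}$ attains a local minimum at $v=u_n$. Clarke's necessary optimality condition then yields $0\in\partial H_n(u_n)$. Using Clarke's sum rule together with the elementary fact that, since $v\mapsto\|v-u_n\|_{\mathbf{X}}$ is convex and vanishes at $u_n$, its Clarke subdifferential at $u_n$ is exactly the closed unit ball $\overline{B}_{\mathbf{X}^*}(0,1)$, we obtain
\[
0\in\partial I_{\epsilon}(u_n)+\tfrac{1}{n}\,\overline{B}_{\mathbf{X}^*}(0,1).
\]
Consequently there exists $w_n\in\partial I_{\epsilon}(u_n)$ with $\|w_n\|_{\mathbf{X}^*}\leq 1/n$, and so $\lambda_{\epsilon}(u_n)\leq 1/n\to 0$.

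The argument is essentially standard for non-smooth Ekeland-type constructions, so no serious obstacle appears. The one point that deserves care is ensuring the strict interiority of $u_n$ (without which the perturbation $v=u_n+tw$ might leave the feasible set and the local minimality of $H_n$ would fail); this is handled entirely by the gap between the boundary value $\kappa>0$ and the negative level $c_{\epsilon}<-\delta^*<0$ established in Lemma~\ref{le4.1}. The deeper work -- extracting a convergent subsequence and passing to the limit in $\partial I_{\epsilon}(u_n)$ via a suitable Palais-Smale-type argument -- is reserved for the subsequent lemmas, where the non-compactness of $\mathbb{R}^N$ and the exponential critical growth captured by $\Phi_1$ will enter in earnest.
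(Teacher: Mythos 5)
Your proposal is correct and follows the same overall skeleton as the paper: apply the Ekeland Variational Principle to $I_{\epsilon}$ restricted to $\overline{B}_{\rho}(0)$, read off (i) from the first Ekeland inequality, and use the gap from Lemma~\ref{le4.1} between the boundary values ($\geq\kappa>0$) and the level $c_{\epsilon}<-\delta^*$ to conclude that $u_n$ lies in the open ball, so that the second Ekeland inequality becomes an unconstrained local condition. Where you diverge is in how you extract (ii) from that condition. You observe that $v\mapsto I_{\epsilon}(v)+\tfrac1n\|v-u_n\|_{\mathbf{X}}$ has a local minimum at $u_n$, invoke Clarke's necessary optimality condition and the sum rule, and use that the Clarke subdifferential of the convex perturbation $\|\cdot-u_n\|_{\mathbf{X}}$ at $u_n$ is the closed unit ball of $\mathbf{X}^*$, directly producing $w_n\in\partial I_{\epsilon}(u_n)$ with $\|w_n\|_{\mathbf{X}^*}\leq 1/n$. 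The paper instead takes $u=u_n+\sigma v$ in the Ekeland inequality, lets $\sigma\downarrow 0$ to get $-\tfrac1n\|v\|_{\mathbf{X}}\leq I_{\epsilon}^0(u_n;v)=\max_{w\in\partial I_{\epsilon}(u_n)}\langle w,v\rangle$ for all $v$, replaces $v$ by $-v$, and then swaps $\sup_{\|v\|=1}$ and $\min_{w}$ by a minimax argument to reach the same bound $\lambda_{\epsilon}(u_n)\leq 1/n$. Both routes are standard and valid; yours is shorter and avoids the minimax interchange at the cost of invoking the sum rule for Clarke gradients and the exact subdifferential of the norm, while the paper's argument stays entirely at the level of generalized directional derivatives. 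One cosmetic remark: since interiority of $u_n$ is only guaranteed for $n$ large (the paper arranges this by choosing $n$ with $1/n$ below the gap $\inf_{\partial B_{\rho}}I_{\epsilon}-\inf_{\overline{B}_{\rho}}I_{\epsilon}$), you should either discard or re-index the finitely many initial terms, which does not affect either conclusion.
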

 \begin{proof}
 First of all, we have $I_{\epsilon}\in Lip_{loc}(\mathbf{X},\mathbb{R})$ (due to Lemma \ref{2.6} and Theorem \ref{Theorem 3.1}). Thanks to Lemma \ref{le4.1}, since  $ c_{\epsilon}=\underset{\|u\|\leq\rho}{\inf}I_{\epsilon}(u)<-\delta^*<0$ and $I_{\epsilon}(u)\geq \kappa \text{ for } \|u\|=\rho$, we get
 $$\underset{\partial B_{\rho}}{\inf} I_{\epsilon}(u)-\underset{\overline{B}_{\rho}}{\inf} I_{\epsilon}(u)>0.$$
 Choose a sufficiently large $n$ such that 
 \begin{equation}\label{4.12}
 \frac{1}{n} \in (0, \underset{\partial B_{\rho}}{\inf} I_{\epsilon}(u)-\underset{\overline{B}_{\rho}}{\inf} I_{\epsilon}(u)) .
 \end{equation}
 Now by applying Ekeland Variational Principle on $I_{\epsilon}:\overline{B}_{\rho}\rightarrow \mathbb{R}$, we obtain a sequence $\{u_n\}\subset\overline{B}_{\rho}$ such that
 \begin{align}\label{4.13}
 \begin{split}c_{\epsilon}\leq I_{\epsilon}(u_n)&\leq c_{\epsilon}+\frac{1}{n}, \text{ and } \\ 
I_{\epsilon}(u_n)-I_{\epsilon}(u)&\leq \frac{\|u_n-u\|_\mathbf{X}}{n},\forall~u\in \overline{B}_{\rho} \text{ and } u_n\neq u. \end{split}
 \end{align}
 From \eqref{4.13}, the proof of part (i) follows as $n\rightarrow+\infty$ and \eqref{4.12} implies  
 \begin{equation}\label{4.15}
 I_{\epsilon}(u_n)\leq \underset{\overline{B}_{\rho}}{\inf}I_{\epsilon}(u)+\frac{1}{n}<\underset{\partial B_{\rho}}{\inf} I_{\epsilon}(u) .
\end{equation}
{Thus, \eqref{4.15} implies that $u_n\in B_{\rho}.$ Let $\sigma>0$ be small enough such that $u_{\sigma}=u_n+\sigma v \in B_{\rho}$, for $v\in \mathbf{X}.$ Replace $u=u_\sigma$ in \eqref{4.13}, we get
\begin{equation*}
I_{\epsilon}(u_n+\sigma v)-I_{\epsilon}(u_n)+\frac{1}{n}\|u_n-u\|_\mathbf{X}\geq 0, \end{equation*}
which further implies
$$I_{\epsilon}(u_n+\sigma v)-I_{\epsilon}(u_n)+\frac{\sigma}{n}\|v\|_\mathbf{X}\geq 0.$$}Further, we deduce
\begin{equation*}
-\frac{1}{n}\|v\|_{\mathbf{X}}\leq \underset{\sigma\downarrow 0}{{\limsup}}\frac{I_{\epsilon}(u_n+\sigma v)-I_{\epsilon}(u_n)}{\sigma}\leq I_{\epsilon}^0(u_n;v)=\underset{w\in\partial I_{\epsilon}(u_n)}{\max}\langle w,v \rangle.\end{equation*}
Now, we replace $v$ by $-v$ in the above inequality we obtained, and we get
$$ -\frac{1}{n}\|v\|_{\mathbf{X}}\leq -\underset{w\in\partial I_{\epsilon}(u_n)}{\min}\langle w,v \rangle,\forall~v\in \mathbf{X}.$$ It gives us
$$\underset{w\in\partial I_{\epsilon}(u_n)}{\min}\langle w,v \rangle\leq \frac{1}{n}\|v\|_{\mathbf{X}}.$$Therefore, 
\begin{equation*}
\underset{\|v\|_\mathbf{X}=1}{\sup}~~\underset{w\in\partial I_{\epsilon}(u_n)}{\min}\langle w,v \rangle\leq \frac{1}{n}.
\end{equation*}
By the minimax argument in \cite{MR2424292}, we have
\begin{equation*}
\underset{w\in\partial I_{\epsilon}(u_n)}{\min}\underset{\|v\|_\mathbf{X}=1}{\sup}\langle w,v \rangle\leq \frac{1}{n}.
\end{equation*}
Hence, the proof of part (ii) is complete. \end{proof}
Since there is no Hilbert structure on $\mathbf{X}$, we need a careful analysis of gradient convergence. The next will serve our purpose. 
 \begin{lemma}\label{l4.3}
Let the assumptions \ref{f2} and \ref{f6} hold. Consider  $\{u_n\}$ be a sequence obtained in Lemma \ref{lemma4.2} and $u_{\epsilon} \in \mathbf{X}$. If $$\underset{n\rightarrow+\infty}{\limsup}\|u_n\|_{W^{1,N}}^\frac{N}{N-1}<\frac{\alpha_N}{\alpha_0} \text{ and } u_n\rightharpoonup  u_{\epsilon} \text{ in }
 \mathbf{X},$$ then, $\nabla u_n\rightarrow \nabla u_{\epsilon}$ a.e. in $\mathbb{R}^N.$
 \end{lemma}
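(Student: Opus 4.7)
The plan is to exploit the near-criticality $\lambda_\epsilon(u_n)=o_n(1)$ from Lemma \ref{lemma4.2} by testing against $v=u_n-u_\epsilon$, absorb the subdifferential contribution via the exponential-integrability estimate of Lemma \ref{2.1}, and reduce the problem to showing that the two monotone $(p,N)$-Laplace integrals vanish in the limit. A pointwise (subsequence) application of strict monotonicity of $\xi\mapsto|\xi|^{r-2}\xi$ will then deliver a.e.\ convergence of $\nabla u_n$. Concretely, by Lemma \ref{lemma4.2}(ii) I pick $w_n\in\partial I_\epsilon(u_n)$ with $\|w_n\|_{\mathbf{X}^*}\to 0$ and split $w_n=J_\epsilon'(u_n)-\rho_n$ with $\rho_n\in\partial\Upsilon(u_n)\subset\partial_tF(\cdot,u_n)$; boundedness of $\{u_n-u_\epsilon\}$ in $\mathbf{X}$ yields
$$\langle J_\epsilon'(u_n),u_n-u_\epsilon\rangle=\int_{\mathbb{R}^N}\rho_n(u_n-u_\epsilon)\dx+o_n(1).$$

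\textbf{The main obstacle.} The hard step is to show that the nonlinear term above is $o_n(1)$, since $\rho_n$ has critical exponential growth. By \ref{f6} one has $|\rho_n|\leq c_1|u_n|^{N-1}+c_2\Phi(\alpha_0|u_n|^{N/(N-1)})$ a.e. The polynomial piece is dispatched by H\"older's inequality and the compact embedding $\mathbf{X}\hookrightarrow\hookrightarrow L^N(\mathbb{R}^N)$ from Lemma \ref{2.4}. For the critical piece, the hypothesis $\limsup\|u_n\|_{W^{1,N}}^{N/(N-1)}<\alpha_N/\alpha_0$ is precisely what is needed to invoke Lemma \ref{2.1}: it furnishes $\alpha>\alpha_0$ close to $\alpha_0$ and $t>1$ close to $1$ such that $\|\Phi(\alpha|u_n|^{N/(N-1)})\|_{L^t}$ is uniformly bounded. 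H\"older with conjugate exponents $t,t'$ and the compact embedding $\mathbf{X}\hookrightarrow\hookrightarrow L^{t'}(\mathbb{R}^N)$ then give
$$\int_{\mathbb{R}^N}\Phi(\alpha_0|u_n|^{N/(N-1)})|u_n-u_\epsilon|\dx\leq C^{1/t}\|u_n-u_\epsilon\|_{L^{t'}}=o_n(1),$$
so $\int\rho_n(u_n-u_\epsilon)\dx=o_n(1)$ and therefore $\langle J_\epsilon'(u_n),u_n-u_\epsilon\rangle=o_n(1)$.

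\textbf{Conclusion via monotonicity.} The weak convergence $u_n\rightharpoonup u_\epsilon$ in $\mathbf{X}$ gives $\langle J_\epsilon'(u_\epsilon),u_n-u_\epsilon\rangle=o_n(1)$, and $g\in\mathbf{X}^*$ combined with the strong convergence in every $L^q$ yields $\epsilon\int g(u_n-u_\epsilon)\dx=o_n(1)$. Subtracting, I obtain
$$\sum_{r\in\{p,N\}}\!\Bigl[\int_{\mathbb{R}^N}(|\nabla u_n|^{r-2}\nabla u_n-|\nabla u_\epsilon|^{r-2}\nabla u_\epsilon)\!\cdot\!\nabla(u_n-u_\epsilon)\dx+\int_{\mathbb{R}^N}V(x)(|u_n|^{r-2}u_n-|u_\epsilon|^{r-2}u_\epsilon)(u_n-u_\epsilon)\dx\Bigr]=o_n(1).$$
Because $p,N\geq 2$, each of the four integrands is non-negative by the standard monotonicity of $\xi\mapsto|\xi|^{r-2}\xi$; hence each integral vanishes, and in particular so do the two gradient ones. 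Passing to a subsequence along which these non-negative integrands converge to zero pointwise a.e., strict monotonicity of $\xi\mapsto|\xi|^{r-2}\xi$ forces $\nabla u_n(x)\to\nabla u_\epsilon(x)$ for a.e.\ $x\in\mathbb{R}^N$, as required.
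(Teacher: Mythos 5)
Your argument is correct, and it reaches the conclusion by a genuinely more streamlined route than the paper. The paper localizes: it introduces a cutoff $\psi\in C_c^{\infty}(\mathbb{R}^N)$ equal to $1$ on $B_R(0)$, tests the near-critical condition with $\psi(u_n-u_\epsilon)$, estimates eight separate terms $\Psi_1,\dots,\Psi_8$ (the extra ones generated by $\nabla\psi$ and by the potential terms), deduces $\int_{B_R(0)}P_n\,\mathrm{d}x\to 0$ for the $N$-Laplacian monotonicity integrand $P_n$, invokes a cited convergence criterion to get $\nabla u_n\to\nabla u_\epsilon$ in $L^N(B_R(0))$, and finally lets $R\to\infty$. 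You instead test globally with $u_n-u_\epsilon$ (admissible, since it lies in $\mathbf{X}$), subtract $\langle J_\epsilon'(u_\epsilon),u_n-u_\epsilon\rangle=o_n(1)$ coming from weak convergence, and conclude that all four monotone integrals over $\mathbb{R}^N$ vanish simultaneously. The decisive estimate --- killing $\int\rho_n(u_n-u_\epsilon)\,\mathrm{d}x$ via \ref{f6}, Lemma \ref{2.1} with $\alpha>\alpha_0$ and $t>1$ close to $1$, H\"older, and the compact embedding of Lemma \ref{2.4} --- is identical to the paper's treatment of $\Psi_8^2$, so the localization buys nothing essential here; your version avoids the cutoff bookkeeping and in fact yields the stronger conclusion $\nabla u_n\to\nabla u_\epsilon$ in $L^p(\mathbb{R}^N)\cap L^N(\mathbb{R}^N)$, since for $r\geq 2$ one has $(|\xi|^{r-2}\xi-|\eta|^{r-2}\eta)\cdot(\xi-\eta)\geq 2^{2-r}|\xi-\eta|^{r}$ (here the standing hypothesis $2\leq p<N$ is used). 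Two small points to make explicit: first, at the last step strict monotonicity alone does not instantly give pointwise convergence unless you also note that the integrand dominates $\bigl(|\nabla u_n|^{N-1}-|\nabla u_\epsilon|^{N-1}\bigr)\bigl(|\nabla u_n|-|\nabla u_\epsilon|\bigr)$, which keeps $\nabla u_n(x)$ bounded at a.e.\ fixed $x$ before you extract subsequential limits; the quantitative inequality above sidesteps this entirely. Second, like the paper's own proof, you obtain a.e.\ convergence only after passing to a subsequence, which is consistent with how the lemma is used downstream.
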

 \begin{proof} Since $\{u_n\}\subset\mathbf{X}$ is a bounded sequence in $\mathbf{X}$, so up to a subsequence still denoted by $\{u_n\}$, we have
$$\begin{cases}
 u_n\rightharpoonup u_\epsilon \text{ in } \mathbf{X},\\
 u_n\rightarrow u_\epsilon \text{ a.e. in } \mathbb{R}^N,\\
 u_n\rightharpoonup u_\epsilon \text{ in }W_{V}^{1,t}(\mathbb{R}^N),~t\in\{p,N\},\\
 u_n\rightarrow u_{\epsilon} \text{ in } L^q(\mathbb{R}^N),\forall~q\in[1,+\infty).
 \end{cases}$$
Now we make the following claim:\\
{{\bf Claim:} For $\{u_n\} \subset \mathbf{X}$, we have $$\nabla u_n\rightarrow \nabla u_\epsilon \text{ in } L^{N}(\mathbb{R}^N)$$ if and only if 
 $$\underset{n\rightarrow+\infty}{\lim}\underset{\mathbb{R}^N}{\int}\left( |\nabla u_n(x)|^{N-2}\nabla u_n(x)-|\nabla u_\epsilon(x)|^{N-2}\nabla u_\epsilon(x)\right)\cdot\left(\nabla u_n(x)-\nabla(u_\epsilon(x)\right)\dx=0.$$}
{ We follow the proof of \cite[Lemma 2.1]{MR4587597} to establish the above claim} as the  function $a(x,\nabla u_n)=|\nabla u_n|^{N-2}\nabla u_n$ satisfy all the hypothesis of cited lemma.  By the convexity of $s\mapsto|s|^N$, it follows that
 $$(|\xi|^{N-2}\xi-|\eta|^{N-2}\eta)\cdot(\xi-\eta)\geq 0.$$
 Next, we set $$P_n(x)=\left( |\nabla u_n(x)|^{N-2}\nabla u_n(x)-|\nabla u_\epsilon(x)|^{N-2}\nabla u_\epsilon(x)\right)\cdot\left(\nabla u_n(x)-\nabla u_\epsilon(x)\right).$$
 Consider $\psi\in C_c^\infty(\mathbb{R}^N)$ with $0\leq\psi\leq1$, $\psi\equiv 1$ on $B_{R}(0)$ and $\psi\equiv0$ on $B_{2R}^c(0)$. From Lemma \ref{lemma4.2} (ii), it follows that, there exists $w_n \in~ \partial I_\epsilon (u_n)$ such that
 \begin{equation}\label{4.16}
 \begin{split}
 \langle w_n, \psi(u_n-u_\epsilon)\rangle =o_n(1)\quad\text{as}\quad n\to\infty.
 \end{split}
 \end{equation}
 By using $P_n(x)\geq 0$, we have
 \begin{equation*}
 0\leq \underset{B_{R}(0)}{\int} P_n(x)\dx\leq \underset{\mathbb{R}^N}{\int} P_n(x)\psi(x)\dx.
 \end{equation*}
Notice that $\nabla ((u_n-u_\epsilon)\psi)=(\nabla u_n-\nabla u_\epsilon)\psi + (u_n-u_\epsilon)\nabla\psi$. It follows that
{
\begin{align}\label{4.18}
\underset{B_{R}(0)}{\int} P_n(x)\dx&\notag\leq \underset{\mathbb{R}^N}{\int}\left( |\nabla u_n|^{N-2}\nabla u_n-|\nabla u_\epsilon|^{N-2}\nabla u_\epsilon\right)\cdot\left(\nabla u_n-\nabla u_\epsilon\right)\psi \dx\\ \notag
&= \underset{\mathbb{R}^N}{\int}\left( |\nabla u_n|^{N-2}\nabla u_n-|\nabla u_\epsilon|^{N-2}\nabla u_\epsilon\right)\cdot\left(\nabla ((u_n-u_\epsilon)\psi)-(u_n-u_\epsilon)\nabla\psi \right) \dx\\ \notag
 &=\underset{\mathbb{R}^N}{\int}\left( |\nabla u_n|^{N-2}\nabla u_n\right)\cdot\left(\nabla ((u_n-u_\epsilon)\psi)\right)\dx\\ \notag  &\qquad-\underset{\mathbb{R}^N}{\int}\left( |\nabla u_n|^{N-2}\nabla u_n\right)\cdot\left((u_n-u_\epsilon)\nabla\psi \right)\dx\\ 
  &\qquad-\underset{\mathbb{R}^N}{\int}
 \left(|\nabla u_\epsilon|^{N-2}\nabla u_\epsilon\right)\cdot\left( (\nabla u_n-\nabla u_\epsilon)\psi\right)\dx.
\end{align}
} 
 Next, we define 
 \begin{equation*}
 \begin{split}
 \varrho_n(x)=\left( |\nabla u_n(x)|^{N-2}\nabla u_n(x)-|\nabla u_\epsilon(x)|^{N-2}\nabla u_\epsilon(x)\right)\cdot\left(\nabla u_n(x)-\nabla u_\epsilon(x)\right)\\+\left( |\nabla u_n(x)|^{p-2}\nabla u_n(x)-|\nabla u_\epsilon(x)|^{p-2}\nabla u_\epsilon(x)\right)\cdot\left(\nabla u_n(x)-\nabla u_\epsilon(x)\right).
\end{split}
 \end{equation*}
 Due to convexity of map $s\mapsto|s|^N$; we have  $\varrho_n(x)\geq P_n(x)\geq 0.$ This implies
\begin{equation*}
0\leq \underset{B_{R}(0)}{\int} P_n(x)\dx\leq \underset{\mathbb{R}^N}{\int} \varrho_n(x)\psi(x)\dx.
\end{equation*}
From \eqref{4.16} and \eqref{4.18}, it easy to see that
\begin{align*}
\underset{B_{R}(0)}{\int} P_n(x)&\dx \notag \leq o_n(1)-\underset{\mathbb{R}^N}{\int}|\nabla u_n|^{p-2}\nabla u_n.\nabla\psi (u_n-u_\epsilon)\dx \\ \notag
&-\underset{\mathbb{R}^N}{\int}\left( |\nabla u_n|^{N-2}\nabla u_n\right).\left((u_n-u_\epsilon)\nabla\psi \right)\dx -\underset{\mathbb{R}^N}{\int}
 \left(|\nabla u_\epsilon|^{p-2}\nabla u_\epsilon\right)\cdot\left( (\nabla u_n-\nabla u_\epsilon)\psi\right)\dx\\ \notag  &-\underset{\mathbb{R}^N}{\int}
 \left(|\nabla u_\epsilon|^{N-2}\nabla u_\epsilon\right)\cdot\left( (\nabla u_n-\nabla u_\epsilon)\psi\right)\dx -\underset{\mathbb{R}^N}{\int} V_0|u_n|^{p-2}u_n(u_n-u_{\epsilon})\psi \dx\\ \notag  &-\underset{\mathbb{R}^N}{\int} V_0|u_n|^{N-2}u_n(u_n-u_{\epsilon})\psi \dx 
 +\epsilon \underset{\mathbb{R}^N}{\int} g\psi (u_n-u_{\epsilon})\dx\\ &+\underset{\mathbb{R}^N}{\int} \rho_n \psi(u_n-u_\epsilon)\dx.
\end{align*}
For simplicity, we denote the  above expression as follows
\begin{equation*}
\underset{B_{R}(0)}{\int} P_n(x)\dx\leq o_n(1)-\Psi_1-\Psi_2-\Psi_3-\Psi_4-\Psi_5-\Psi_6+\Psi_7+\Psi_8.
\end{equation*}
Now we will show that each~ $\Psi_i \rightarrow 0$ as $n\rightarrow+\infty$ for each $1\leq i\leq 8.$
Consider $$\Psi_1=\underset{\mathbb{R}^N}{\int}|\nabla u_n|^{p-2}\nabla u_n.\nabla\psi (u_n-u_\epsilon)\dx.$$ First, we estimate
\begin{equation*} 
\begin{split}
\lvert{\Psi_1}\rvert=\left|{\underset{\mathbb{R}^N}{\int}|\nabla u_n|^{p-2}\nabla u_n.\nabla\psi (u_n-u_\epsilon)\dx}\right|\leq\|\nabla \psi\|_{\infty}{\underset{\mathbb{R}^N}{\int}|\nabla u_n|^{p-1}  |(u_n-u_\epsilon)|\dx}.
\end{split}
\end{equation*}
By H\"older's inequality and $\mathbf{X}\hookrightarrow\hookrightarrow L^q(\mathbb{R}^N),~q\geq 1$
\begin{equation*}
\lvert{\Psi_1}\rvert \leq\|\nabla \psi\|_{\infty}\left(\underset{\mathbb{R}^N}{\int}|\nabla u_n|^p\dx\right)^\frac{p-1}{p}\left(\underset{\mathbb{R}^N}{\int}|u_n-u_\epsilon|^p\dx\right)^\frac{1}{p} \rightarrow 0 \text{ as } n\rightarrow+\infty.
\end{equation*}
So $\Psi_1\rightarrow 0$, similarly, we can prove that $\Psi_2,\Psi_5,\Psi_6\rightarrow 0$ as $n\rightarrow+\infty.$
Now we consider $$\Psi_3=\underset{\mathbb{R}^N}{\int}
 \left(|\nabla u_\epsilon|^{p-2}\nabla u_\epsilon\right)\cdot\left( (\nabla u_n-\nabla u_\epsilon)\psi\right)\dx.$$ Suppose,
 \begin{equation*}
  \langle F_1, v\rangle = \underset{\mathbb{R}^N}{\int} |\nabla u_\epsilon|^{p-2}\nabla u_{\epsilon}.\nabla v \psi \dx,~~v\in W^{1,p}(\mathbb{R}^N).
 \end{equation*}
 H\"older's inequality ensures that, $F_1\in (W^{1,p}(\mathbb{R}^N)^*$. Since $u_n\rightharpoonup u_\epsilon$ in $ \mathbf{X}$, we deduce that $\Psi_3\rightarrow 0$ as $n\rightarrow+\infty$. By similar arguments $\Psi_4\rightarrow 0$ as $n\rightarrow+\infty.$ As $g\in \mathbf{X}^*$ and $u_n\rightharpoonup u_\epsilon$ in $\mathbf{X}$, so $$\underset{\mathbb{R}^N}{\int} g\psi (u_n-u_{\epsilon})\dx \rightarrow 0, ~~n\rightarrow +\infty.$$
 Now consider $$\Psi_8=\underset{\mathbb{R}^N}{\int} \rho_n \psi(u_n-u_\epsilon)\dx.$$ By assumption \ref{f6}, we have
 \begin{equation*}
 \begin{split}
 \left|{\underset{\mathbb{R}^N}{\int} \rho_n \psi(u_n-u_\epsilon)\dx}\right|\leq c_1 \underset{\mathbb{R}^N}{\int}|u_n|^{N-1}|\psi\|(u_n-u_\epsilon)|\dx+c_2 \underset{\mathbb{R}^N}{\int}\Phi(\alpha_0|u_n|^\frac{N}{N-1})|\psi\|u_n-u_\epsilon|\dx.
 \end{split}
 \end{equation*}
 Let us define $$\Psi_{8}^{1}=\underset{\mathbb{R}^N}{\int}|u_n|^{N-1}|\psi\|(u_n-u_\epsilon)|\dx \text{ and } \Psi_{8}^2=\underset{\mathbb{R}^N}{\int}\Phi(\alpha_0|u_n|^\frac{N}{N-1})|\psi\|u_n-u_\epsilon|\dx.$$
 Using the previous arguments, one can prove that $\Psi_{8}^1\rightarrow 0$ as $n\rightarrow+\infty.$ Now, since $$\underset{n\rightarrow+\infty}\limsup\|u_n\|_{W^{1,N}}^\frac{N}{N-1}<\frac{\alpha_N}{\alpha_0}, $$ by definition, there exists, $l>0$ such that $$ \|u_n\|_{W^{1,N}}^\frac{N}{N-1}<l<\frac{\alpha_N}{\alpha_0}, \forall  n\geq n_0.$$ Choose $\alpha>\alpha_0$ (sufficiently close to $\alpha_0$) and $r>1$ but close to $1$ such that $r\alpha\|u_n\|_{W^{1,N}}^\frac{N}{N-1}<l<\alpha_N$ for all $n\geq n_0$ and $\frac{1}{r}+\frac{1}{r'}=1$, then apply H\"older's inequality and using Lemma \ref{2.4}, we have
 \begin{align*}
 |\Psi_8^2| \leq &\left|\underset{\mathbb{R}^N}{\int}\Phi(\alpha|u_n|^\frac{N}{N-1})|u_n-u_\epsilon|\dx\right|\\
&\leq \left(\underset{\mathbb{R}^N}{\int}\left(\Phi\left(\alpha|u_n|^\frac{N}{N-1}\right)\right)^r\dx\right)^\frac{1}{r} \left(\underset{\mathbb{R}^N}{\int}\left(|u_n-u_\epsilon|^r{'}\dx\right)\right)^\frac{1}{r'}\\
&\leq \left(\underset{\mathbb{R}^N}{\int}\Phi\left(r\alpha|u_n|^\frac{N}{N-1}\right)dx\right)^\frac{1}{r} \left(\underset{\mathbb{R}^N}{\int}|u_n-u_\epsilon|^r{'}\dx\right)^\frac{1}{r'}\\
&\leq \left(\underset{\mathbb{R}^N}{\int}\Phi\left(\frac{r\|u_n\|_{W^{1,N}}^\frac{N}{N-1}\alpha|u_n|^\frac{N}{N-1}}{\|u_n\|_{W^{1,N}}^\frac{N}{N-1}}\right)dx\right)^\frac{1}{r}  \left(\underset{\mathbb{R}^N}{\int}|u_n-u_\epsilon|^r{'}\dx\right)^\frac{1}{r'}\\
&\leq C\left(\underset{\mathbb{R}^N}{\int}|u_n-u_\epsilon|^r{'}\dx\right)^\frac{1}{r'} \rightarrow 0 \text{ as } n\rightarrow +\infty.
\end{align*}
 { Hence, $\Psi_8\rightarrow 0 \text{ as }n\rightarrow +\infty.$ This implies
 \begin{equation*}
 \underset{n\rightarrow +\infty}{\lim}\underset{B_{R}(0)}{\int} P_n(x)\dx=0.
 \end{equation*}
 Therefore, we obtain
 \begin{equation*}
 \nabla u_n\rightarrow \nabla u_\epsilon \text{ in } L^{N}({B_{R}(0)}).
\end{equation*}}Since it holds for all $R>0$, up to a subsequence (is still denoted by the same), we get
\begin{equation*}
\nabla u_n\rightarrow \nabla u_\epsilon \text{ a.e. in } \mathbb{R}^N.
\end{equation*}
\end{proof}
\begin{lemma}\label{lemma4.4}
Suppose \ref{f6} holds. For $\rho_n\in \partial_tF(x,u_n)$, there is $\rho_0 \in L^{\tilde{\Phi}_{1}}(\mathbb{R}^N)$, such that 
\begin{equation*}
\underset{\mathbb{R}^N}{\int}\rho_n v\dx \to \underset{\mathbb{R}^N}{\int}\rho_0 v\dx,~\forall~ v\in \mathbf{X}.
\end{equation*}
\end{lemma}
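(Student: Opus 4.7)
The plan is to identify $\rho_0$ as the weak$^{\ast}$ limit of $\{\rho_n\}$ in $L^{\tilde{\Phi}_{1}}(\mathbb{R}^N)$, viewed as the dual of the separable space $E_{\Phi_{1}}(\mathbb{R}^N)$ (properties~(E) and~(F) in Subsection~\ref{o1}), and then transfer this convergence to test functions in $\mathbf{X}$ via the embedding $\mathbf{X}\hookrightarrow E_{\Phi_{1}}(\mathbb{R}^N)$ from Lemma~\ref{2.6}(i). The crux is therefore to establish that $\{\rho_n\}$ is uniformly bounded in $L^{\tilde{\Phi}_{1}}(\mathbb{R}^N)$; by the duality, this is equivalent to showing that the functionals $v\mapsto\int_{\mathbb{R}^N}\rho_n v\dx$ are uniformly bounded on the unit ball of $E_{\Phi_{1}}(\mathbb{R}^N)$.

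For this estimate I invoke \ref{f6} to split
\begin{equation*}
\left|\int_{\mathbb{R}^N}\rho_n v\dx\right|\le c_1\int_{\mathbb{R}^N}|u_n|^{N-1}|v|\dx+c_2\int_{\mathbb{R}^N}\Phi\bigl(\alpha_0|u_n|^{N/(N-1)}\bigr)|v|\dx.
\end{equation*}
The sequence $\{u_n\}$ produced by Lemma~\ref{lemma4.2} lies in $\overline{B}_\rho\subset\mathbf{X}$ with $\rho$ small, so Lemma~\ref{Lemma 2.3} supplies $\sup_n\|u_n\|_{L^q}<\infty$ for every $q\in[N,+\infty)$. The polynomial piece is then controlled by H\"older's inequality together with the continuous embedding $E_{\Phi_{1}}(\mathbb{R}^N)\hookrightarrow L^{N}(\mathbb{R}^N)$ of Lemma~\ref{2.6}(ii), yielding $\int_{\mathbb{R}^N}|u_n|^{N-1}|v|\dx\le C\|v\|_{\Phi_{1}}$. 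For the exponential piece, the smallness of $\rho$ in Lemma~\ref{le4.1} guarantees $\limsup_n\|u_n\|_{W^{1,N}}^{N/(N-1)}<\alpha_N/\alpha_0$, so Lemma~\ref{2.1} provides $\alpha>\alpha_0$ close to $\alpha_0$ and $t>1$ close to $1$ (so that the conjugate exponent satisfies $t'\ge N$) with $\int_{\mathbb{R}^N}\Phi(\alpha|u_n|^{N/(N-1)})^{t}\dx\le C_3$; another H\"older inequality and the same embedding give $\int_{\mathbb{R}^N}\Phi(\alpha_0|u_n|^{N/(N-1)})|v|\dx\le C_{3}^{1/t}\|v\|_{L^{t'}}\le C\|v\|_{\Phi_{1}}$.

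Combining both estimates shows that the linear functionals $v\mapsto\int_{\mathbb{R}^N}\rho_n v\dx$ are uniformly bounded on $E_{\Phi_{1}}(\mathbb{R}^N)$, so property~(F) gives $\sup_n\|\rho_n\|_{\tilde{\Phi}_{1}}<\infty$. The separability of $E_{\Phi_{1}}(\mathbb{R}^N)=\overline{C_c^\infty(\mathbb{R}^N)}^{\|\cdot\|_{\Phi_{1}}}$ then lets Banach--Alaoglu produce $\rho_0\in L^{\tilde{\Phi}_{1}}(\mathbb{R}^N)$ and a subsequence (still denoted $\{\rho_n\}$) with $\rho_n\overset{\ast}{\rightharpoonup}\rho_0$ in this duality. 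For any $v\in\mathbf{X}$, Lemma~\ref{2.6}(i) places $v$ in $E_{\Phi_{1}}(\mathbb{R}^N)$, so the weak$^{\ast}$ convergence delivers $\int_{\mathbb{R}^N}\rho_n v\dx\to\int_{\mathbb{R}^N}\rho_0 v\dx$. The main difficulty is the Orlicz-norm bound for $\rho_n$: since $\Phi_{1}$ fails the $\Delta_{2}$ condition, polynomial $L^p$ estimates alone are insufficient, and one must route through the $E_{\Phi_{1}}$--$L^{\tilde{\Phi}_{1}}$ duality and calibrate $t$ in Lemma~\ref{2.1} so that $t'\ge N$, a calibration made possible only by the smallness of $\rho$ fixed in Lemma~\ref{le4.1}.
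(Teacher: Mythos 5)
Your proposal is correct, and its overall architecture (uniform bound on $\{\rho_n\}$ in $L^{\tilde{\Phi}_{1}}(\mathbb{R}^N)$, weak$^{\ast}$ compactness in $(E_{\Phi_{1}}(\mathbb{R}^N))^*\cong L^{\tilde{\Phi}_{1}}(\mathbb{R}^N)$, then restriction to $\mathbf{X}\hookrightarrow E_{\Phi_{1}}(\mathbb{R}^N)$) coincides with the paper's. Where you genuinely diverge is in how the uniform bound is obtained. The paper estimates the modular directly: it plugs the growth bound \ref{f6} into $\int_{\mathbb{R}^N}\tilde{\Phi}_{1}(\rho_n)\,\dx$ and works on the conjugate side, using the convexity and $\Delta_2$-property of $\tilde{\Phi}_{1}$, the scaling inequality $\tilde{\Phi}_{1}(tr)\leq\zeta(t)\tilde{\Phi}_{1}(r)$ and the Young-type inequality $\tilde{\Phi}_{1}(\Phi_{1}(r)/r)\leq\Phi_{1}(r)$ from Lemma \ref{e5}, together with Lemma \ref{2.2}, to land on $\int_{\mathbb{R}^N}\tilde{\Phi}_{1}(\rho_n)\,\dx\leq\tilde{C}$. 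You instead bound the dual pairing $\sup_{\|v\|_{\Phi_{1}}\leq1}|\int\rho_n v\,\dx|$ by ordinary H\"older inequalities in Lebesgue spaces, using Lemma \ref{2.1} (legitimate here because $\rho$ is small, exactly the smallness the paper also exploits when it chooses $\alpha$ with $\alpha\rho^{N/(N-1)}<\alpha_N$). Your route is more elementary in that it bypasses the conjugate-function machinery of Lemma \ref{e5} entirely; its price is that you need the embedding $E_{\Phi_{1}}(\mathbb{R}^N)\hookrightarrow L^{t'}(\mathbb{R}^N)$ for the large conjugate exponent $t'$ of the $t>1$ supplied by Lemma \ref{2.1}, whereas Lemma \ref{2.6}(ii) is stated only for $L^N$. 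This is not a real gap --- since $|s|^{t'}\leq|s|^{N}\leq (N-1)!\,\Phi_{1}(s)$ for $|s|\leq1$ and $|s|^{t'}\leq j!\,\Phi_{1}(s)$ for $|s|\geq1$ once $Nj/(N-1)\geq t'$, the same modular argument as in Lemma \ref{2.6}(ii) yields $\|v\|_{L^{t'}}\leq C\|v\|_{\Phi_{1}}$ --- but you should state and justify this extension explicitly rather than citing "the same embedding." Finally, note that (as in the paper) the convergence is obtained only along a subsequence; you handle this correctly by relabelling.
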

\begin{proof} First, we will show that $\{\rho_n\}$ is bounded in $L^{\tilde{\Phi}_{1}}(\mathbb{R}^N)$. Since $\Phi_1$ is an increasing function and from assumption \ref{f6} there is $\alpha>\alpha_0$ close to it  such that $ \alpha\rho^\frac{N}{N-1}<\alpha_N$ and $q>N$ such that, we have
\begin{equation*}
\underset{\mathbb{R}^N}{\int}\tilde{\Phi}_{1}(\rho_n)\dx\leq \underset{\mathbb{R}^N}{\int}\tilde{\Phi}_{1}\left(c_1|u_n|^{N-1}+c_2|u_n|^q\Phi(\alpha|u|^\frac{N}{N-1})\right)\dx.
\end{equation*}
Using the $\Delta_2$-condition and convexity of $\tilde{\Phi}_{1}$, we have
\begin{equation*}
\underset{\mathbb{R}^N}{\int}\tilde{\Phi}_{1}(\rho_n)\dx\leq \frac{1}{2}\underset{\mathbb{R}^N}{\int}\tilde{\Phi}_{1}(1)\zeta(2c_1|u_n|^{N-1})\dx+ \frac{\zeta(2c_2)}{2}\underset{\mathbb{R}^N}{\int}\tilde{\Phi}_{1}\left(|u_n|^q\Phi_{1}\left(\alpha^\frac{N-1}{N}|u|\right)\right)\dx.
\end{equation*}
{ Using \eqref{e5}, we get 
\begin{align*}
\underset{\mathbb{R}^N}{\int}\tilde{\Phi}_{1}(\rho_n)\dx &\leq C_1\left(\underset{\mathbb{R}^N}{\int}|u_n|^{N-1}\dx+\underset{\mathbb{R}^N}{\int}|u_n|^{N(N-1)}\dx\right)\\ & +C_2\left(\underset{\mathbb{R}^N}{\int}\left(|u_n|^{q+1}+|u_n|^{N(q+1)}\right)\Phi_{1}\left(\alpha^\frac{N-1}{N}|u|\right)\dx\right).
\end{align*}}
The fact that $\mathbf{X}\hookrightarrow L^q(\mathbb{R}^N),$ for all $q\geq 1$ and Lemma \ref{2.2} implies there is $\tilde{C}>0$ such that 
\begin{equation*}
\underset{\mathbb{R}^N}{\int}\tilde{\Phi}_{1}(\rho_n)\dx\leq \tilde{C},~\forall~n\in\mathbb{N}.
\end{equation*}
\text{Hence $\{\rho_n\}$ is bounded in $L^{\tilde{\Phi}_{1}}(\mathbb{R}^N)$}. So sequence of functionals $\tilde{\rho_n}\subset\partial\Upsilon(u_n)\subset(E_{\Phi_1}(\mathbb{R}^N))^*$ corresponding to $\{\rho_n\}$ is also bounded in $(E_{\Phi_1}(\mathbb{R}^N))^*$. So there is $\tilde{\rho_0}\in (E_{\Phi_1}(\mathbb{R}^N))^*$ such that $\tilde{\rho_n}\overset{\ast}{\rightharpoonup}\tilde{\rho_0}$ in $(E_{\Phi_1}(\mathbb{R}^N))^*$ i.e
\begin{equation}\label{4.22}
\underset{\mathbb{R}^N}{\int} \rho_n v\dx=\langle \tilde{\rho_n},v\rangle \rightarrow \langle \tilde{\rho_0},v\rangle =\underset{\mathbb{R}^N}{\int} \rho_0 v\dx,~\forall v\in\mathbf{X},
\end{equation}
for some $\rho_0\in L^{\tilde{\Phi}_{1}}(\mathbb{R}^N).$ Hence, proof of the Lemma is complete. \end{proof}
Now we are in a situation to include the proof of Theorem \ref{main result 1}.
\begin{proof}[Proof of Theorem \ref{main result 1}]
First, we set $\hat\epsilon=\min\{\epsilon_0,\tilde\epsilon\},$
where
\begin{equation}\label{4.271}
\tilde\epsilon=\left(\min\{1,V_0\}\frac{1}{\rho \|g\|_*}\left(\frac{\beta(N-\beta)}{N\beta(\beta-1)}\right)\left(\frac{\alpha_N}{\alpha_0}\right)^\frac{N}{N-1}\right)
\end{equation}
and $\epsilon_0$ is obtained in Lemma \ref{4.1}.
From Lemma \ref{lemma4.2}, we obtain a sequence $\{u_n\}\subset \overline{B}_{\rho}(0)\subset\mathbf{X}$ such that 
 \begin{itemize}
     \item[(i)] $I_{\epsilon}(u_n)\rightarrow c_{\epsilon}$ \text{ as } $n\rightarrow+\infty$,
     \item[(ii)] $\lambda_{\epsilon}(u_n)\rightarrow 0$ \text{ as } $n\rightarrow+\infty$.
\end{itemize}
Since $\{u_n\}$ is a bounded sequence in $\mathbf{X}$ and it is reflexive, up to a subsequence still denoted by itself $$u_n\rightharpoonup u_\epsilon \text{ in } \mathbf{X}.$$
Let $\beta>N$ assumed in \ref{f4} and due to Lemma \ref{4.1}, we have
\begin{equation*}
0>c_{\epsilon}=I_{\epsilon}(u_n)-\frac{1}{\beta} \langle w_n, u_n \rangle + o_n(1).
\end{equation*}
Then, the above implies
\begin{equation*}
0>\left(\frac{1}{p}-\frac{1}{\beta}\right)\|u_n\|_{W_{V}^{1,p}}^p+ \left(\frac{1}{N}-\frac{1}{\beta}\right)\|u_n\|_{W_{V}^{1,N}}^N-\epsilon \frac{\beta -1}{\beta}\|g\|_{*}\rho+ o_n(1).
\end{equation*} Further, we deduce
\begin{equation}\label{4.24}
\left(\frac{1}{N}-\frac{1}{\beta}\right)\left(\|u_n\|_{W_{V}^{1,p}}^p+\|u_n\|_{W_{V}^{1,N}}^N\right)<\epsilon \frac{\beta -1}{\beta}\|g\|_{*}\rho+ o_n(1).
\end{equation}
Since $W_V ^{1,N}(\mathbb{R}^N)\hookrightarrow W ^{1,N}(\mathbb{R}^N)$, we obtain 
\begin{equation}\label{4.2601}
\left(\frac{1}{N}-\frac{1}{\beta}\right) \min\{1,V_0\}\|u_n\|_{W^{1,N}}^N\leq \left(\frac{1}{N}-\frac{1}{\beta}\right)\left(\|u_n\|_{W_{V}^{1,p}}^p+\|u_n\|_{W_{V}^{1,N}}^N\right).
\end{equation}
Then from \eqref{4.24} and \eqref{4.2601}, we get
\begin{equation*}
\left(\frac{1}{N}-\frac{1}{\beta}\right) \min\{1,V_0\}\|u_n\|_{W^{1,N}}^N\leq \left(\frac{1}{N}-\frac{1}{\beta}\right)\left(\|u_n\|_{W_{V}^{1,p}}^p+\|u_n\|_{W_{V}^{1,N}}^N\right)<\epsilon \frac{\beta -1}{\beta}\|g\|_{*}\rho+ o_n(1).
\end{equation*} 
Hence, from the above, we get
\begin{equation*}\label{4.272}
\|u_n\|_{W^{1,N}}^N<\epsilon \frac{\beta -1}{\beta}\|g\|_{*}\rho\frac{N\beta}{(N-\beta)}\frac{1}{\min\{1,V_0\}}+o_n(1).
\end{equation*}
From \eqref{4.271} and \eqref{4.24}, we obtain
\begin{equation*}
\underset{n\rightarrow+\infty}\limsup\|u_n\|_{W^{1,N}}^\frac{N}{N-1}<\frac{\alpha_N}{\alpha_0} \text{ as } n\rightarrow+\infty.
\end{equation*}
So by Lemma \ref{l4.3}, we deduce $$\nabla u_n(x)\rightarrow \nabla u_{\epsilon}(x) \text{ a.e. in } \mathbb{R}^N.$$
{Since $\{|\nabla u_n|^{p-2}\nabla u_n\}$  is bounded in $L^{\frac{p}{p-1}}(\mathbb{R}^N)$, we get
\begin{align*}
|\nabla u_n|^{p-2}\nabla u_n\rightharpoonup |\nabla u_\epsilon|^{p-2}\nabla u_\epsilon \text{ in } L^{\frac{p}{p-1}}(\mathbb{R}^N).
\end{align*}
Now for $\psi \in C_c^{\infty}(\mathbb{R}^N)$, we have
\begin{equation*}
\underset{n\rightarrow+\infty}{\lim}\underset{\mathbb{R}^N}{\int}|\nabla u_n|^{p-2}\nabla u_n \cdot\nabla \psi \dx =\underset{n\rightarrow+\infty}{\lim}\underset{B_R}{\int}|\nabla u_n|^{p-2}\nabla u_n\cdot \nabla \psi \dx = \underset{\mathbb{R}^N}{\int}|\nabla u_\epsilon|^{p-2}\nabla u_\epsilon\cdot\nabla \psi \dx.
\end{equation*}}
\noindent Next, using density arguments, we obtain
\begin{equation}\label{4.25}
\underset{n\rightarrow+\infty}{\lim}\underset{\mathbb{R}^N}{\int}|\nabla u_n|^{p-2}\nabla u_n\cdot \nabla v \dx = \underset{\mathbb{R}^N}{\int}|\nabla u_\epsilon|^{p-2}\nabla u_\epsilon\cdot\nabla v \dx,\forall v\in \mathbf{X}.
\end{equation}
By similar arguments, as discussed above, we obtain
\begin{align}\label{4.26}
\begin{split}
\underset{n\rightarrow+\infty}{\lim}\underset{\mathbb{R}^N}{\int}|\nabla u_n|^{N-2}\nabla u_n \cdot\nabla v \dx &= \underset{\mathbb{R}^N}{\int}|\nabla u_\epsilon|^{N-2}\nabla u_\epsilon\cdot\nabla v \dx, \forall ~v\in \mathbf{X}, \\ 
\underset{n\rightarrow+\infty}{\lim}\underset{\mathbb{R}^N}{\int}V(x)| u_n|^{N-2} u_n  v \dx &= \underset{\mathbb{R}^N}{\int}V(x)| u_\epsilon|^{N-2} u_\epsilon v \dx,\forall ~v\in \mathbf{X}, \\
\underset{n\rightarrow+\infty}{\lim}\underset{\mathbb{R}^N}{\int}V(x)| u_n|^{p-2} u_n  v \dx &= \underset{\mathbb{R}^N}{\int}V(x)| u_\epsilon|^{p-2} u_\epsilon v \dx,\forall ~v\in \mathbf{X}. \end{split}
\end{align}
From \eqref{4.22}, \eqref{4.25} and \eqref{4.26}, for all $v\in \mathbf{X}$, we have 
\begin{align}\label{4.37}
\underset{\mathbb{R}^N}{\int}|\nabla u_\epsilon|^{p-2}\nabla u_\epsilon.\nabla v \dx &+\underset{\mathbb{R}^N}{\int}|\nabla u_\epsilon|^{N-2}\nabla u_\epsilon\cdot\nabla v \dx \notag \\  &+\underset{\mathbb{R}^N}{\int}V(x)| u_\epsilon|^{p-2} u_\epsilon v \dx+\underset{\mathbb{R}^N}{\int}V(x)| u_\epsilon|^{N-2} u_\epsilon v \dx-\underset{\mathbb{R}^N}{\int}\rho_0 v\dx-\epsilon\underset{\mathbb{R}^N}{\int}gv\dx=0.
\end{align}
The above identity tells us that \ref{solution1} is now verified. Next, we check for condition \ref{solution 2}, i.e., we will show that $\rho_0(x)\in \partial_t F(x,u_{\epsilon}(x)) \text{ a.e. in } \mathbb{R}^N.$
To prove this, it is sufficient to prove that $u_n\rightarrow u_\epsilon \text{ in } \mathbf{X}$ due to Proposition \ref{proposition}. First, we set $\gamma_n=u_n-u_\epsilon$.
Then, the weak convergence of  $u_n$ in $\mathbf{X}$  implies that $\gamma_n\rightharpoonup 0$ in $\mathbf{X}.$ Due to the Br\'ezis-Lieb Lemma-type results \cite{MR699419, MR4857527}, we have
\begin{equation}\label{4.38}
\begin{cases}
\|u_n\|_{W_{V}^{1,p}}^p=\|u_\epsilon\|_{W_{V}^{1,p}}^p+\|\gamma_n\|_{W_{V}^{1,p}}^p+o_n(1),\\
\|u_n\|_{W_{V}^{1,N}}^N=\|u_\epsilon\|_{W_{V}^{1,N}}^N+\|\gamma_n\|_{W_{V}^{1,N}}^N+o_n(1).
\end{cases}
\end{equation}
Hence, we get
\begin{equation*}
o_n(1)=\langle w_n,u_n \rangle =\|u_n\|_{W_{V}^{1,p}}^p+\|u_n\|_{W_{V}^{1,N}}^N-\underset{\mathbb{R}^N}{\int}\rho_n u_n \dx-\epsilon \underset{\mathbb{R}^N}{\int}gu_n\dx.
\end{equation*}
From equation \eqref{4.38}, we obtain
\begin{align}\label{4.40}
o_n(1)&=\|u_\epsilon\|_{W_{V}^{1,p}}^p+\|\gamma_n\|_{W_{V}^{1,p}}^p+o_n(1)+\|u_\epsilon\|_{W_{V}^{1,N}}^N+\|\gamma_n\|_{W_{V}^{1,N}}^N-\underset{\mathbb{R}^N}{\int}\rho_n u_n \dx-\epsilon \underset{\mathbb{R}^N}{\int}gu_n\dx \notag \\ 
&\quad+\underset{\mathbb{R}^N}{\int}\rho_0 u_\epsilon \dx-\underset{\mathbb{R}^N}{\int}\rho_0 u_\epsilon \dx-\epsilon \underset{\mathbb{R}^N}{\int}gu_\epsilon \dx+\epsilon \underset{\mathbb{R}^N}{\int}gu_\epsilon \dx.
\end{align}
From equation \eqref{4.37}, with $v=u_\epsilon$, we have 
\begin{equation}\label{4.41}
0=\|u_\epsilon\|_{W_{V}^{1,p}}^p+\|u_\epsilon\|_{W_{V}^{1,N}}^N-\underset{\mathbb{R}^N}{\int}\rho_0 u_\epsilon \dx-\epsilon \underset{\mathbb{R}^N}{\int}gu_\epsilon \dx.
\end{equation}
Now by using equation \eqref{4.41} and \eqref{4.40}, we obtain
\begin{equation}\label{4.42}
o_n(1)=\|\gamma_n\|_{W_{V}^{1,p}}^p+o_n(1)+\|\gamma_n\|_{W_{V}^{1,N}}^N-\underset{\mathbb{R}^N}{\int}\rho_n (u_n-u_\epsilon) \dx -\epsilon \underset{\mathbb{R}^N}{\int}g(u_n-u_\epsilon)\dx.
\end{equation}
Since  $u_n\rightharpoonup u_\epsilon$ in $\mathbf{X}$,  \eqref{4.42} reduced to 
\begin{equation*}
o_n(1)=\|\gamma_n\|_{W_{V}^{1,p}}^p+\|\gamma_n\|_{W_{V}^{1,N}}^N-\underset{\mathbb{R}^N}{\int}\rho_n (u_n-u_\epsilon) \dx+o_n(1) ~~\text{as }~~n\rightarrow+\infty.
\end{equation*}
Now by assumption \ref{f6}, we get
\begin{equation*}
\left| \underset{\mathbb{R}^N}{\int}\rho_n (u_n-u_\epsilon) \dx\right|\leq \underset{\mathbb{R}^N}{\int}|\rho_n\|\gamma_n|\dx\leq c_1\underset{\mathbb{R}^N}{\int}|u_n|^{N-1}|\gamma_n|\dx+c_2\underset{\mathbb{R}^N}{\int}
|\gamma_n|\Phi(\alpha|u_n|^\frac{N}{N-1})\dx.
\end{equation*}
Upon using H\"older's inequality and embedding $\mathbf{X}\hookrightarrow\hookrightarrow L^{q}(\mathbb{R}^N),$ for all $q\geq 1$, we have 
\begin{equation*}
\underset{\mathbb{R}^N}{\int}|u_n|^{N-1}|\gamma_n|\dx\rightarrow 0 \text{ as } n\rightarrow +\infty.
\end{equation*}
Now choose $r>1$  such that $\frac{1}{r}+\frac{1}{r'}=1$ and $\alpha>\alpha_0$ such that $r\alpha\|u_n\|_{W^{1,N}}^{\frac{N}{N-1}}<\alpha_N$.
Then, H\"older's inequality, the embedding $\mathbf{X}\hookrightarrow\hookrightarrow L^{q}(\mathbb{R}^N),~q\geq 1$ and Lemma \ref{2.1} altogether imply
\begin{equation*}
\underset{\mathbb{R}^N}{\int}
|\gamma_n|\Phi(\alpha|u_n|^\frac{N}{N-1})\dx\rightarrow ~0 \text{ as } n\rightarrow +\infty.
\end{equation*}
This further implies $$\|\gamma_n\|_{W_{V}^{1,p}}^p+\|\gamma_n\|_{W_{V}^{1,N}}^N~~\rightarrow 0 \text{ as } n\rightarrow +\infty.$$
Then, $\gamma_n\rightarrow 0 \text{ in } \mathbf{X}.$ 
Hence,  $\rho_0(x)\in \partial_t F(x,u_{\epsilon}(x)) \text{ a.e. in } \mathbb{R}^N,$ that is \ref{solution 2} is verified.

Last but not least, to conclude the proof, we have to show that condition \ref{solution 3} holds.
One can write $u_\epsilon=u_{\epsilon}^{+}-u_{\epsilon}^{-}$
where
\begin{equation*}
u_{\epsilon}^{+}(x)=
\begin{cases}
 u_\epsilon(x),~~u_\epsilon(x)\geq0,\\
0,~~~~~~~~~u_\epsilon(x)<0. 
\end{cases} \text{ and }
u_{\epsilon}^{-}(x)=
\begin{cases}
 0,~~~~~~~u_\epsilon(x)\geq0,\\
 -u_\epsilon(x),~~~~~u_\epsilon(x)<0. 
\end{cases}
\end{equation*}
Put $v=-u_\epsilon^{-}$ in \eqref{4.37}, we have
\begin{equation*}
0=\|u_\epsilon^{-}\|_{W_V^{1,p}}^p+\|u_\epsilon^{-}\|_{W_V^{1,N}}^N+\epsilon\underset{\mathbb{R}^N}{\int}gu_\epsilon^{-}\dx.
\end{equation*}
This implies $u_\epsilon^{-}\equiv0$, so $u_\epsilon=u_{\epsilon}^{+}\geq 0 \text{ and  } u_\epsilon\not\equiv 0.$ Now we discuss about two possible cases:

\noindent Case 1: Let $t_0=0$, then $m([u_\epsilon>t_0])>0$ always holds due to above argument.\\
Case 2: Let $t_0>0$ and consider $t_0\in (0, t_*)$ where $t_{*}=\frac{N\delta^*}{(N-1)\epsilon(\int_{\mathbb{R}^N}g\dx)}$. Since $\rho_0,u_\epsilon\geq 0$, so due to \eqref{4.37}, we have
\begin{align*}
0=\|u_\epsilon\|_{W_{V}^{1,p}}^p+\|u_\epsilon\|_{W_{V}^{1,N}}^N-\underset{\mathbb{R}^N}{\int}\rho_0 u_\epsilon \dx-\epsilon \underset{\mathbb{R}^N}{\int}gu_\epsilon \dx. \end{align*} It further implies
\begin{align*}
0 \leq \|u_\epsilon\|_{W_{V}^{1,p}}^p+\|u_\epsilon\|_{W_{V}^{1,N}}^N-\epsilon \underset{\mathbb{R}^N}{\int}gu_\epsilon \dx.
\end{align*}
Therefore, we deduce
\begin{equation}\label{4.47}
\epsilon \underset{\mathbb{R}^N}{\int}gu_\epsilon \dx\leq \|u_\epsilon\|_{W_{V}^{1,p}}^p+\|u_\epsilon\|_{W_{V}^{1,N}}^N.
\end{equation}
{On the contrary, we assume that $m([u_\epsilon>t_0])=0$ for some $t_0\in(0,t_{*}).$ Then, $u_{\epsilon} <t_0$ and hence thanks to \ref{f1}, we obtain \(
f(x, u_\epsilon(x))=0~\text{a.e.}~\text{in}~~\mathbb{R}^N.\)}
Hence 
\begin{equation*}
\partial_t F(x,u_\epsilon(x))={0}~\text{a.e.~in}~\mathbb{R}^N.
\end{equation*}
This implies $\rho_0(x)=0~\text{a.e. in}~\mathbb{R}^N.$
By using Lemma \ref{le4.1} and equation \eqref{4.47}, we get
\begin{equation*}
-\delta^*>I_{\epsilon}(u_{\epsilon})=\frac{1}{p}\|u_\epsilon\|_{W_{V}^{1,p}}^p+\frac{1}{N}\|u_\epsilon\|_{W_{V}^{1,N}}^N-\epsilon \underset{\mathbb{R}^N}{\int}gu_\epsilon \dx\geq \frac{1}{N}\left(\|u_\epsilon\|_{W_{V}^{1,p}}^p+\|u_\epsilon\|_{W_{V}^{1,N}}^N\right)-\epsilon \underset{\mathbb{R}^N}{\int}gu_\epsilon \dx.
\end{equation*}
Further, it implies
\begin{equation*}
-\delta^*\geq -\epsilon\left(\frac{N-1}{N}\right)\underset{\mathbb{R}^N}{\int}gu_\epsilon \dx\geq-\epsilon\left(\frac{N-1}{N}\right)t_0\underset{\mathbb{R}^N}{\int}g\dx,
\end{equation*}
and hence, we get
\begin{equation*}
t_0\geq\frac{N\delta^*}{(N-1)\epsilon(\int_{\mathbb{R}^N}g\dx)} =t_{*},
\end{equation*}
which is a contradiction. Therefore, $m([u_\epsilon>t_0])>0$, and the proof is complete. \end{proof}
\section{Existence of the second solution}\label{section5}
In this section, we will establish the solution via a non-smooth version of the Mountain Pass Theorem. We refer to \cite{radulescu1993mountain} for further details.
\begin{lemma}\label{le5.1}
Assume that \ref{f2}, \ref{f3} and \ref{f6} holds. Then, the functional $I_{\epsilon}$ (defined in \eqref{4.1}) satisfies mountain pass geometry.
\end{lemma}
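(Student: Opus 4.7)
The plan is to verify both geometric conditions of the non-smooth Mountain Pass Theorem (Theorem~\ref{thm 2.1}): (a) a neighborhood-of-zero coercivity $I_\epsilon(u)\geq \kappa>0$ on a sphere $\|u\|_{\mathbf{X}}=\rho$, and (b) the existence of some $e\in\mathbf{X}$ with $\|e\|_{\mathbf{X}}>\rho$ and $I_\epsilon(e)<0$.

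For (a), I would not redo the computation from scratch, but rather invoke the estimates already established in Lemma~\ref{le4.1}. Since assumptions \ref{f2} and \ref{f6} are in force, the same combination of Lebourg's mean value theorem, \ref{f2} (controlling elements of $\partial_t F$ by $\frac{(\lambda_1-\upsilon)}{N}|t|^{N-1}$ near zero), and \ref{f6} together with Lemma~\ref{2.2} (providing the super-$N$-subcritical control $|F(x,t)|\leq C_{\bar\delta}|t|^q\Phi(\alpha|t|^{N/(N-1)})$ for large $|t|$) yields the estimate
\begin{equation*}
I_\epsilon(u)\geq \frac{1}{N2^N}\|u\|_{\mathbf{X}}^N-C_{\bar\delta}\|u\|_{\mathbf{X}}^q-\epsilon\|g\|_*\|u\|_{\mathbf{X}}.
\end{equation*}
Choosing $\rho\in(0,\delta]$ maximizing $h(l)=l^N/(2^{N+1}N)-C_{\bar\delta}l^q$ and $\epsilon_0=\rho^{N-1}/(2^{N+1}N\|g\|_*)$ exactly as in Lemma~\ref{le4.1} gives $I_\epsilon(u)\geq \kappa>0$ on $\|u\|_{\mathbf{X}}=\rho$, for every $\epsilon\in(0,\epsilon_0]$.

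For (b), this is where the new assumption \ref{f3} enters. Since $Q$ is a compact set, I would fix $\varphi\in C_c^\infty(\mathbb{R}^N)$ with $\varphi\geq 0$, $\varphi\not\equiv 0$, and $\mathrm{supp}(\varphi)\subset Q$ (using that $Q$ has nonempty interior, which is implicit in the use of \ref{f3}). Evaluate the functional along the ray $t\varphi$, $t>0$:
\begin{equation*}
I_\epsilon(t\varphi)=\frac{t^p}{p}\|\varphi\|_{W_V^{1,p}}^p+\frac{t^N}{N}\|\varphi\|_{W_V^{1,N}}^N-\int_{\mathbb{R}^N} F(x,t\varphi)\dx-\epsilon t\int_{\mathbb{R}^N} g\varphi \dx.
\end{equation*}
Since $t\varphi\geq 0$ and vanishes outside $Q$, while $F(x,0)=0$, assumption \ref{f3} gives
\begin{equation*}
\int_{\mathbb{R}^N} F(x,t\varphi)\dx=\int_Q F(x,t\varphi)\dx\geq k_1 t^\nu\int_Q \varphi^\nu \dx-k_2\, m(Q),
\end{equation*}
so that
\begin{equation*}
I_\epsilon(t\varphi)\leq \frac{t^p}{p}\|\varphi\|_{W_V^{1,p}}^p+\frac{t^N}{N}\|\varphi\|_{W_V^{1,N}}^N-k_1 t^\nu\int_Q \varphi^\nu \dx+k_2 m(Q)+\epsilon t\|g\|_*\|\varphi\|_{\mathbf{X}}.
\end{equation*}
Because $\nu>N>p$ and $\int_Q\varphi^\nu\dx>0$, the right-hand side tends to $-\infty$ as $t\to+\infty$. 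Hence, for $t$ sufficiently large, $\|t\varphi\|_{\mathbf{X}}>\rho$ and $I_\epsilon(t\varphi)<0$; setting $e:=t\varphi$ completes the verification.

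There is no essential obstacle here; part (a) is essentially already performed in Lemma~\ref{le4.1} and requires only transcription, and part (b) is a standard super-$N$-polynomial versus $N$-polynomial comparison along a one-parameter family. The only subtlety worth highlighting is that both the linear perturbation $-\epsilon t\int g\varphi\dx$ and the leading $\frac{t^N}{N}\|\varphi\|_{W_V^{1,N}}^N$ term are dominated by $-k_1t^\nu\int_Q\varphi^\nu\dx$ thanks to $\nu>N$, and that $\varphi$ must be chosen so that its support meets the set $Q$ on a set of positive measure, in order to ensure $\int_Q\varphi^\nu\dx>0$.
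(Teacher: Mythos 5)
Your proposal is correct and follows essentially the same route as the paper: part (a) is imported verbatim from Lemma \ref{le4.1}, and part (b) tests $I_\epsilon$ along the ray $t\varphi$ with $\varphi\in C_c^\infty$, $\varphi\geq 0$, $\mathrm{supp}(\varphi)\subset Q$, using \ref{f3} and $\nu>N>p$ to drive the functional to $-\infty$. Your explicit remark that $Q$ must have nonempty interior for such a $\varphi$ to exist is a fair observation that the paper leaves implicit, but otherwise the two arguments coincide.
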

\begin{proof} From Lemma \ref{le4.1}, we conclude that there are $\kappa, \rho, \epsilon_0$ such that for all $\epsilon\in(0,\epsilon_0],$ we have
\begin{equation}\label{5.1}
I_{\epsilon}(u)\geq \kappa,~\forall~\|u\|_\mathbf{X}=\rho.
\end{equation}
Let $\psi_0\in C_{c}^{\infty}(\mathbb{R}^N)\setminus{0}$, $\psi_0>0$ with $\mathrm{supp}(\psi_0)\subset Q$, where $Q$ is fixed compact subset of $\mathbb{R}^N$ in \ref{f3}. For $t>0$,
\begin{equation*}
I_{\epsilon}(t\psi_0)=\frac{t^p}{p}\| \psi_0\|_{W_{V}}^{1,p}+\frac{t^N}{N}\| \psi_0\|_{W_{V}}^{1,N}-\underset{\mathbb{R}^N}{\int} F(x,t\psi_0)\dx-t\epsilon \underset{\mathbb{R}^N}{\int} g\psi_0\dx.
\end{equation*}
From assumption \ref{f3} for $\nu>N$, we deduce
\begin{equation*}
F(x,t)\geq k_1t^{\nu}-k_2,~\text{for}~t\geq0~\text{and} ~\forall~x\in Q.
\end{equation*}
Hence
\begin{equation*}
I_{\epsilon}(t\psi_0)\leq \frac{t^p}{p}\| \psi_0\|_{W_{V}}^{1,p}+\frac{t^N}{N}\| \psi_0\|_{W_{V}}^{1,N}-k_1 t^{\nu}\underset{\mathbb{R}^N}{\int} \psi_0^{\nu}\dx+k_2m(Q)-t\epsilon \underset{\mathbb{R}^N}{\int} g\psi_0\dx.
\end{equation*}
Since $\nu>N$, we obtain 
\begin{equation*}
\underset{t\rightarrow+\infty}{\lim} I_{\epsilon}(t\psi_0)=-\infty.
\end{equation*}
One can choose a sufficiently large $t$ such that 
\begin{equation}\label{5.2}
\phi_0=t\psi_0\in B_{\rho}^c(0).
\end{equation}
From \eqref{5.1} and \eqref{5.2}, the Lemma follows.
Due to equation \eqref{5.1} and  Theorem \ref{thm 2.1}, we get a sequence $\{v_n\}\subset \mathbf{X}$ which satisfies 
\begin{equation}\label{5.3}
I_{\epsilon}(v_n)\rightarrow b_{\epsilon} \text{ and } \lambda_{\epsilon}(v_n)\rightarrow0, \\
\end{equation}
where 
\begin{align*}
b_\epsilon &=\underset {\gamma\in\Gamma~ t\in [0,1]}{\inf~\max}J(\gamma(t))~\text{and}\\
 \Gamma:=\{\gamma &\in C([0,1],\mathbf{X})~|~~ \gamma(0)=0~\text{and}~\gamma(1)=\phi_0 \},
\end{align*} and this completes the proof. \end{proof} 
We will show that $I_{\epsilon}$ verifies the $(PS)_{b_{\epsilon}}$ condition if the parameter $\mu$ given in \ref{f5} is large enough. To do this, we need the following Lemmas.
\begin{lemma}\label{le5.2}
{ Let the assumptions of Lemma \ref{le5.1} hold. Also, suppose assumption \ref{f4} holds.} Then the  sequence $\{v_n\}\subset \mathbf{X}$ obtained in \eqref{5.3} is bounded in $\mathbf{X} $ and there exist $M>0$ such that 
\begin{equation}\label{5.4}
b_\epsilon+\epsilon \|g\|_{*}M\frac{(\beta-1)}{(\beta)}+o_n(1)\geq a\left(\|v_n\|_{W_{V}^{1,p}}^p+\|v_n\|_{W_{V}^{1,N}}^N \right), 
\end{equation}
where $b_{\epsilon}$ is mountain pass level and $a=\left(\frac{1}{N}-\frac{1}{\beta}\right).$
\end{lemma}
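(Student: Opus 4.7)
The plan is to exploit the quasi-Ambrosetti--Rabinowitz condition \ref{f4} together with the vanishing $F(x,t)=0$ for $t\leq t_0$ forced by \ref{f1}. From $\lambda_\epsilon(v_n)\to 0$ I would pick $w_n\in\partial I_\epsilon(v_n)$ with $\|w_n\|_{\mathbf{X}^*}\to 0$; since $J_\epsilon\in C^1(\mathbf{X},\mathbb{R})$ and $\partial I_\epsilon(v_n)=J_\epsilon'(v_n)-\partial\Upsilon(v_n)$, write $w_n=J_\epsilon'(v_n)-\rho_n$ for some $\rho_n\in\partial\Upsilon(v_n)$, so that $\rho_n(x)\in[\underline f(x,v_n(x)),\overline f(x,v_n(x))]$ a.e.\ in $\mathbb{R}^N$ by the inclusion proved in Section~\ref{section 3}.

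A direct rearrangement of $I_\epsilon(v_n)-\frac{1}{\beta}\langle w_n,v_n\rangle$ gives
$$I_\epsilon(v_n)-\tfrac{1}{\beta}\langle w_n,v_n\rangle=\Bigl(\tfrac{1}{p}-\tfrac{1}{\beta}\Bigr)\|v_n\|_{W_V^{1,p}}^p+\Bigl(\tfrac{1}{N}-\tfrac{1}{\beta}\Bigr)\|v_n\|_{W_V^{1,N}}^N+\tfrac{1}{\beta}\int_{\mathbb{R}^N}\rho_nv_n\dx-\int_{\mathbb{R}^N}F(x,v_n)\dx-\epsilon\tfrac{\beta-1}{\beta}\int_{\mathbb{R}^N}gv_n\dx.$$
On $[v_n<t_0]$ both $F(x,v_n)$ and $\rho_n$ vanish by \ref{f1} (note $\underline f(x,t_0)=0$ since $f\equiv 0$ to the left of $t_0$), while on $[v_n\geq t_0]$ one has $\rho_n\geq\underline f(x,v_n)\geq 0$, and \ref{f4} yields $\beta F(x,v_n)\leq\underline f(x,v_n)v_n\leq\rho_nv_n$. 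Hence $\frac{1}{\beta}\int\rho_nv_n\dx-\int F(x,v_n)\dx\geq 0$, and since $\beta>N>p$ implies $\frac{1}{p}-\frac{1}{\beta}\geq a$, I obtain
$$b_\epsilon+o_n(1)+\tfrac{1}{\beta}\|w_n\|_{\mathbf{X}^*}\|v_n\|_\mathbf{X}+\epsilon\tfrac{\beta-1}{\beta}\|g\|_*\|v_n\|_\mathbf{X}\geq a\bigl(\|v_n\|_{W_V^{1,p}}^p+\|v_n\|_{W_V^{1,N}}^N\bigr).$$

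Since $p\geq 2$ and $N\geq 3$, the right-hand side is superlinear in $\|v_n\|_\mathbf{X}=\|v_n\|_{W_V^{1,p}}+\|v_n\|_{W_V^{1,N}}$ whereas the left-hand side is at most linear, which forces $\{v_n\}$ to be bounded in $\mathbf{X}$. Setting $M:=\sup_n\|v_n\|_\mathbf{X}<\infty$ and absorbing the vanishing term $\frac{1}{\beta}\|w_n\|_{\mathbf{X}^*}M$ into $o_n(1)$ yields \eqref{5.4}.

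The main subtlety I anticipate is the localization needed to apply \ref{f4}, which a priori only controls $F$ on the half-line $\{t\geq t_0\}$: one must check, using the structure of $\partial_tF$ dictated by \ref{f1}, that the integrable representative $\rho_n$ actually vanishes on $[v_n<t_0]$, so that the nonnegativity $\frac{1}{\beta}\int\rho_nv_n\dx-\int F(x,v_n)\dx\geq 0$ holds globally on $\mathbb{R}^N$ without spurious boundary contributions from the set $\{v_n=t_0\}$ where $\partial_tF$ is only set-valued.
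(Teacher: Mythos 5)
Your proposal is correct and takes essentially the same route as the paper: both pick $w_n\in\partial I_\epsilon(v_n)$ with $\|w_n\|_{\mathbf{X}^*}=\lambda_\epsilon(v_n)\to 0$, estimate $I_\epsilon(v_n)-\frac{1}{\beta}\langle w_n,v_n\rangle$ from below by discarding the term $\frac{1}{\beta}\int_{\mathbb{R}^N}\rho_nv_n\,\mathrm{d}x-\int_{\mathbb{R}^N}F(x,v_n)\,\mathrm{d}x\ge 0$ via \ref{f1} and \ref{f4}, and then deduce boundedness from the resulting inequality before absorbing $o_n(1)\|v_n\|_{\mathbf{X}}$ and $\epsilon\frac{\beta-1}{\beta}\|g\|_*\|v_n\|_{\mathbf{X}}$ with $M=\sup_n\|v_n\|_{\mathbf{X}}$. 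The only (harmless) deviations are that you make explicit the split into $[v_n<t_0]$ and $[v_n\ge t_0]$, which the paper leaves implicit, and you replace the paper's three-case contradiction argument for boundedness by the equivalent superlinear-versus-linear observation.
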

\begin{proof} From equation \eqref{5.3}, we have 
\begin{equation*}
I_{\epsilon}(v_n)\rightarrow b_{\epsilon}~\text{and}~\lambda(v_n)\rightarrow 0. 
\end{equation*}
Let $ w_n \in \mathbf{X}^*$ and $\rho_n \in \partial \Upsilon(v_n)$, then we have
\begin{equation}\label{eqnew1}
\begin{aligned}
\begin{cases}
\|w_n\|_{*}&=\lambda_{\epsilon}(v_n)\\
\langle w_n, v_n \rangle &= \|v_n\|_{W_{V}^{1,p}}^p+\|v_n\|_{W_{V}^{1,N}}^N-\underset{\mathbb{R}^N}{\int}\rho_n v_n \dx-\epsilon \underset{\mathbb{R}^N}{\int}gv_n\dx. \end{cases}
\end{aligned}
\end{equation}
Note that  $I_{\epsilon}(v_n)= b_\epsilon +o_n(1)$ and $o_n(1)\|v_n\|_{\mathbf{X}}=\langle w_n, v_n \rangle$. From assumption \ref{f4}, there exists $\beta>N$ verifying 
\begin{equation}\label{eqnew2}
\begin{aligned}\begin{cases}
0\leq\beta F(x,t)\leq\underline{f}(x,t)t,~\forall~t\geq t_0~\text{and}~x\in \mathbb{R}^N, \\
\rho_n(x)\in [\underline{f}(x,v_n(x)), \overline{f}(x,v_n(x))]~\text{a.e. in}~\mathbb{R}^N. \end{cases}
\end{aligned}
\end{equation}
Further, using \eqref{eqnew1} and \eqref{eqnew2}, we deduce
\begin{align*}
b_{\epsilon}+o_n(1)+o_n(1)\|v_n\|_{\mathbf{X}}&\geq I_{\epsilon}(v_n)-\frac{1}{\beta}\langle w_n, v_n \rangle\\ 
&= \frac{1}{p}\|v_n\|_{W_{V}^{1,p}}^p+\frac{1}{N}\|v_n\|_{W_{V}^{1,N}}^N-\underset{\mathbb{R}^N}{\int} F(x,v_n)\dx-\epsilon\underset{\mathbb{R}^N}{\int}gv_n\dx\\ &- \frac{1}{\beta}\|v_n\|_{W_{V}^{1,p}}^p-\frac{1}{\beta}\|v_n\|_{W_{V}^{1,N}}^N+\frac{1}{\beta}\underset{\mathbb{R}^N}{\int}\rho_n v_n \dx+\frac{\epsilon}{\beta}\underset{\mathbb{R}^N}{\int}gv_n\dx\\
&\geq \left(\frac{1}{p}-\frac{1}{\beta}\right)\|v_n\|_{W_{V}^{1,p}}^p+\left(\frac{1}{N}-\frac{1}{\beta}\right)\|v_n\|_{W_{V}^{1,N}}^N-\epsilon\frac{\beta-1}{\beta}\|g\|_{*}\|v_n\|_{\mathbf{X}}.
\end{align*}
So, we obtain
\begin{equation}\label{5.5}
b_{\epsilon}+o_n(1)+o_n(1)\|v_n\|_{\mathbf{X}}\geq a \left(\|v_n\|_{W_{V}^{1,p}}^p+\|v_n\|_{W_{V}^{1,N}}^N\right)-\epsilon\frac{\beta-1}{\beta}\|g\|_{*}\|v_n\|_{\mathbf{X}}.
\end{equation}
Then, \eqref{5.5} implies $\{v_n\}$ is bounded in $\mathbf{X}.$ Suppose on the contrary, $\{v_n\}$ is unbounded in $\mathbf{X} $, then three cases arise:
\begin{align*}
\text{Case 1. } & \underset{n\rightarrow+\infty}{\lim} \|v_n\|_{W_V^{1,p}}=+\infty \text{ and }~\underset{n\rightarrow+\infty}{\lim} \|v_n\|_{W_V^{1,    N}}=+\infty.\\
\text{Case 2. } & \underset{n\rightarrow+\infty}{\lim} \|v_n\|_{W_V^{1,p}}=+\infty \text{ and }~ \underset{n\geq 1}{\sup} \|v_n\|_{W_V^{1,    N}}<+\infty.\\
\text{Case 3. } &\underset{n\geq 1}{\sup} \|v_n\|_{W_V^{1,p}}<+\infty \text{ and } ~\underset{n\rightarrow+\infty}{\lim} \|v_n\|_{W_V^{1,    N}}=+\infty.
\end{align*}  
Suppose, Case 1 holds, then for some constant $K>0$ from \eqref{5.5}, we get
\begin{equation*}
0<a\leq b_{\epsilon}\frac{1}{\|v_n\|_{W_V^{1,p}}^p+\|v_n\|_{W_V^{1,N}}^N}+K\frac{\|v_n\|_{W_V^{1,p}}+\|v_n\|_{W_V^{1,N}}}{\|v_n\|_{W_V^{1,p}}^p+\|v_n\|_{W_V^{1,N}}^N} ~~\text{as}~~ n\rightarrow+\infty.\end{equation*} This implies \begin{equation*}
0<a\leq o_n(1)+K \left(\|v_n\|_{W_V^{1,p}}^{1-p}+\|v_n\|_{W_V^{1,N}}^{1-N}\right)~~\text{as} \quad n\rightarrow+\infty.
\end{equation*}
Since $(c+d)\leq(c^{1-p}+d^{1-N})(c^p+d^N) \text{ for } c,d\geq 0$. As $n\rightarrow+\infty$ in equation \eqref{5.5} we have $0<a\leq0$.
Hence, Case 1 doesn't hold. Suppose Case 2 holds. Then, from \eqref{5.5}, we get
\begin{equation*}
0<a\leq o_n(1)+K\frac{\|v_n\|_{W_V^{1,p}}+\|v_n\|_{W_V^{1,N}}}{\|v_n\|_{W_V^{1,p}}^p}~~\text{as} \quad n\rightarrow+\infty. \end{equation*}
Therefore, $0<a\leq0,$ as $n \rightarrow +\infty$.
Hence, Case 2 also doesn't hold. Using similar arguments as for Case 2, one can conclude that Case 3 also does not hold. Hence, there is $M>0$ such that $\|v_n\|_{\mathbf{X}}\leq M$ and  
\begin{equation*}
b_\epsilon+\epsilon \|g\|_{*}M\frac{(\beta-1)}{(\beta)}+o_n(1)\geq a\left(\|v_n\|_{W_{V}^{1,p}}^p+\|v_n\|_{W_{V}^{1,N}}^N \right)~~\text{as} \quad n\rightarrow+\infty.
\end{equation*}
Hence, the proof is complete. \end{proof} 
\begin{lemma}\label{le5.3}
{Assume that conditions of Lemma \ref{le5.2} are met.} Also, assume that $\epsilon \in (0, \epsilon_0]$ \text{and} $b_{\epsilon}<b_0-\epsilon \left(\frac{\beta-1}{\beta}\right)\|g\|_{*}M,$
where 
\begin{equation}\label{assumple5.3}
\epsilon_0=\frac{\rho^{N-1}}{2^{N+1}N\|g\|_*}~\text{and}~b_0\leq a\left(\min\{1,V_0\}\right)\left(\frac{\alpha_N}{\alpha_0}\right)^{N-1}.
\end{equation}
Then for the $(PS)_{b_{\epsilon}}$ sequence $\{v_n\}$ of $I_\epsilon$ at level $b_\epsilon$ obtained in Lemma \ref{le5.1}, we have
\begin{equation*}
\underset{n\rightarrow +\infty}{\limsup}\|v_n\|_{W^{1,N}}^\frac{N}{N-1}<\frac{\alpha_N}{\alpha_0}.
\end{equation*}
\end{lemma}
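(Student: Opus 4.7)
The plan is to chain together three ingredients that are already in hand: the energy estimate \eqref{5.4} from Lemma \ref{le5.2}, the elementary inequality $\|\cdot\|_{W_V^{1,N}}^N \ge \min\{1,V_0\}\,\|\cdot\|_{W^{1,N}}^N$ that was established in the proof of Lemma \ref{Lemma 2.3}, and the strict inequality on $b_\epsilon$ imposed in the statement. Since the final quantity we must control is purely the $W^{1,N}$-norm, I would first drop the (nonnegative) $W_V^{1,p}$-contribution from \eqref{5.4} and then pass to the unweighted $W^{1,N}$-norm via the embedding constant $\min\{1,V_0\}$.

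Concretely, from Lemma \ref{le5.2} we have
\begin{equation*}
a\,\|v_n\|_{W_V^{1,N}}^N \;\le\; a\bigl(\|v_n\|_{W_V^{1,p}}^p+\|v_n\|_{W_V^{1,N}}^N\bigr)\;\le\; b_\epsilon+\epsilon\|g\|_*M\,\frac{\beta-1}{\beta}+o_n(1),
\end{equation*}
and by \ref{V1} the weighted norm controls the unweighted one via
\begin{equation*}
\|v_n\|_{W_V^{1,N}}^N\;\ge\;\min\{1,V_0\}\,\|v_n\|_{W^{1,N}}^N.
\end{equation*}
Combining these two estimates yields
\begin{equation*}
a\,\min\{1,V_0\}\,\|v_n\|_{W^{1,N}}^N \;\le\; b_\epsilon+\epsilon\|g\|_*M\,\frac{\beta-1}{\beta}+o_n(1).
\end{equation*}

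Now I invoke the hypothesis $b_\epsilon<b_0-\epsilon\tfrac{\beta-1}{\beta}\|g\|_*M$ to eliminate the $\epsilon$-term: taking $\limsup_{n\to\infty}$ we obtain
\begin{equation*}
a\,\min\{1,V_0\}\,\limsup_{n\to\infty}\|v_n\|_{W^{1,N}}^N \;\le\; b_\epsilon+\epsilon\|g\|_*M\,\frac{\beta-1}{\beta}\;<\;b_0.
\end{equation*}
Finally, the calibration $b_0\le a\,\min\{1,V_0\}\bigl(\alpha_N/\alpha_0\bigr)^{N-1}$ from \eqref{assumple5.3} delivers
\begin{equation*}
\limsup_{n\to\infty}\|v_n\|_{W^{1,N}}^N \;<\; \Bigl(\frac{\alpha_N}{\alpha_0}\Bigr)^{N-1},
\end{equation*}
which is exactly $\limsup_n \|v_n\|_{W^{1,N}}^{N/(N-1)}<\alpha_N/\alpha_0$ after raising to the power $1/(N-1)$.

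I do not anticipate a genuine obstacle here: the lemma is essentially a bookkeeping step that converts the bound on the full $\mathbf{X}$-norm coming from the Palais--Smale estimate into a strictly sub-Trudinger--Moser bound on the pure $W^{1,N}$-norm, so that Lemma \ref{2.1} will later be applicable. The only point to be slightly careful about is that the inequality on $b_\epsilon$ is \emph{strict}, which is what allows the final $<$ rather than $\le$; this strictness must be preserved when passing to $\limsup$, and the $o_n(1)$ term causes no issue because it vanishes in the limit while the gap $b_0-b_\epsilon-\epsilon\tfrac{\beta-1}{\beta}\|g\|_*M>0$ is fixed and independent of $n$.
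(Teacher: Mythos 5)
Your proposal is correct and follows essentially the same route as the paper: drop the $W_V^{1,p}$-term in \eqref{5.4}, pass to the unweighted $W^{1,N}$-norm via the constant $\min\{1,V_0\}$, and chain the strict hypothesis on $b_\epsilon$ with the calibration of $b_0$ in \eqref{assumple5.3}. Your explicit remark that the fixed positive gap $b_0-b_\epsilon-\epsilon\tfrac{\beta-1}{\beta}\|g\|_*M$ is what preserves strictness under $\limsup$ is a point the paper glosses over, and is a welcome clarification.
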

\begin{proof}  By  $W_{V}^{1,N}(\mathbb{R}^N)\hookrightarrow W^{1,N}(\mathbb{R}^N)$, we have 
\begin{equation*}
a\left(\min\{1,V_0\}\right)\|v_{n}\|_{W^{1,N}}^N \leq a \|v_n\|_{W_{V}^{1,N}}\leq a\left(\|v_{n}\|_{W^{1,N}}^N+\|v_{n}\|_{W^{1,p}}^p\right).
\end{equation*}
From Lemma \ref{le5.2}, it follows that
\begin{equation*}
a\left(\min\{1,V_0\}\right)\|v_{n}\|_{W^{1,N}}^N \leq\left(\|v_{n}\|_{W^{1,N}}^N+\|v_{n}\|_{W^{1,p}}^p\right)\leq b_\epsilon+\epsilon \|g\|_{*}M\frac{(\beta-1)}{(\beta)}+o_n(1).
\end{equation*}
From the  given assumption \eqref{assumple5.3}, it follows that
\begin{equation*}
 a\left(\min\{1,V_0\}\right)\|v_{n}\|_{W^{1,N}}^N< b_0+o_n(1),
\end{equation*}
and
\begin{equation*}
 a\left(\min\{1,V_0\}\right)\|v_{n}\|_{W^{1,N}}^N< a\left(\min\{1,V_0\}\right)\left(\frac{\alpha_N}{\alpha_0}\right)^{N-1}+o_n(1).
\end{equation*}
This implies,
\begin{equation*}
 \|v_{n}\|_{W^{1,N}}^N< \left(\frac{\alpha_N}{\alpha_0}\right)^{N-1}+o_n(1).
\end{equation*}
So,
\begin{equation*}
 \underset{n\rightarrow +\infty}{\text{lim sup}}\|v_n\|_{W^{1,N}}^\frac{N}{N-1}<\frac{\alpha_N}{\alpha_0}.
\end{equation*}
 Hence, the proof is complete. \end{proof} 
Let $\phi_0=t\psi_0$ be fixed in Lemma \ref{5.1}. Since we have $\gamma>N$ and $\beta>N$, then, there exists $s>0$ such that $s>\frac{N \gamma}{p}.$ Now define
\begin{equation*}
\mu^*=\max~\bigg\{ 1, D_1^\frac{s-N}{N}, D_2^\frac{(\gamma-p)(s-N)}{(ps-N\gamma)}\bigg\},
\end{equation*}
where
\begin{equation*}
D_1=\frac{2}{a\left(\min\{1,V_0\}\right)}\left(\frac{\alpha_N}{2^\frac{N}{N-1}\alpha_0}\right)^{1-N},
\end{equation*} and
\begin{equation*}
D_2=\left(\frac{1}{p}-\frac{1}{\gamma}\right)\left(\frac{2}{\gamma}\right)^\frac{p}{\gamma-p}\left(\frac{\|\phi_0\|_{W_{V}^{1,p}}}{\|\phi_0\|_{L^{\gamma}}}\right)^\frac{p\gamma}{\gamma-p}+\left(\frac{1}{N}-\frac{1}{\gamma}\right)\left(\frac{2}{\gamma}\right)^\frac{N}{\gamma-N}\left(\frac{\|\phi_0\|_{W_{V}^{1,N}} }{\|\phi_0\|_{L^{\gamma}}}\right)^\frac{N\gamma}{\gamma-N}.
\end{equation*}
For any $\mu>\mu^*$,
\begin{equation*}
\begin{split}
D_2<\mu^\frac{ps-N\gamma}{(\gamma-p)(s-N)}=\mu^\frac{ps-\gamma N+pN-pN}{(\gamma-N)(s-N)}=\mu^{\frac{p}{\gamma-p}-\frac{N}{s-N}},
\end{split}
\end{equation*}
which further implies, $\mu ^\frac{N}{s-N}D_2<\mu^\frac{p}{\gamma-p}$, i.e., $\mu ^\frac{N}{s-N}<\frac{\mu^\frac{p}{\gamma-p}}{D_2}.$ { Now we fix $b_0 (\equiv b_0(\mu))$, such that
\begin{equation}\label{5.11}
\begin{split}
\mu ^\frac{N}{s-N}<\frac{2}{b_0}<\frac{\mu^\frac{p}{\gamma-p}}{D_2}.
\end{split}
\end{equation}}From equation \eqref{5.11}, we obtain 
\begin{equation*}
\frac{D_2}{\mu^\frac{p}{\gamma-p}}<\frac{b_0}{2}.
\end{equation*}
For $\mu>\mu^*$, we have $D_1<\mu^\frac{N}{s-N}$, and 
\begin{equation*}
\begin{split}
~~\frac{2}{a\left(\min\{1,V_0\}\right)}\left(\frac{\alpha_N}{2^\frac{N}{N-1}\alpha_0}\right)^{1-N}<\mu^\frac{N}{s-N}<\frac{2}{b_0}.
\end{split}
\end{equation*}
Finally, we get 
\begin{equation}\label{5.12}
b_0<a\left(\text{min}\{1,V_0\}\right)\left(\frac{\alpha_N}{\alpha_0}\right)^{N-1}.
\end{equation}
\begin{lemma}\label{le5.4}
 Suppose \ref{f5} holds and ${\mu>\mu^*}$. Then, there is $\bar{\epsilon}\in(0,\epsilon_0)$ and $t_0\in[0,t_1)$ \text{such that } 
\begin{equation*}
    b_\epsilon<b_0-\epsilon \|g\|_{*}M\frac{\beta-1}{\beta},~\forall~\epsilon\in(0,\bar{\epsilon}]~\text{and } t_0\in[0,t_1),
\end{equation*}
with $b_0(\equiv b_0(\mu))$ satisfying \eqref{5.11}.
\end{lemma}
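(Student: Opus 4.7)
The plan is to bound the mountain pass level from above along the explicit ray $t \mapsto t\phi_0$ (with $\phi_0$ as in Lemma \ref{le5.1}), and then exploit the polynomial lower bound \ref{f5} on $F$ together with the choice of $\mu^*$ and $b_0$ encoded in \eqref{5.11}. Since by the proof of Lemma \ref{le5.1} the element $\phi_0 = t\psi_0$ (with $t$ large enough) satisfies $I_\epsilon(\phi_0)<0$ and $\|\phi_0\|_{\mathbf{X}}>\rho$, the straight-line path $s\mapsto s\phi_0$, $s\in[0,1]$, belongs to $\Gamma$. Hence
$$b_\epsilon\ \le\ \sup_{t\ge 0} I_\epsilon(t\phi_0).$$

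To estimate this supremum, combine \ref{f1} (giving $F(x,s)=0$ for $s\le t_0$) with \ref{f5} to obtain, on $[t\phi_0\ge t_0]$, $F(x,t\phi_0)\ge \mu(t^\gamma \phi_0^\gamma - t_0^\gamma)$, whence
$$\int_{\mathbb{R}^N} F(x,t\phi_0)\dx\ \ge\ \mu t^{\gamma}\eta(t,t_0)-\mu t_0^{\gamma}m(\mathrm{supp}\,\phi_0),\qquad \eta(t,t_0):=\int_{[t\phi_0\ge t_0]}\phi_0^{\gamma}\dx.$$
Dropping the non-positive term $-\epsilon t\int g\phi_0\,\dx$ in $I_\epsilon(t\phi_0)$, we get
$$I_\epsilon(t\phi_0)\ \le\ \tfrac{t^p}{p}\|\phi_0\|_{W_V^{1,p}}^{p}+\tfrac{t^N}{N}\|\phi_0\|_{W_V^{1,N}}^{N}-\mu t^{\gamma}\eta(t,t_0)+\mu t_0^{\gamma}m(\mathrm{supp}\,\phi_0).$$
I would then split $\mu t^\gamma\eta=\tfrac12\mu t^\gamma\eta+\tfrac12\mu t^\gamma\eta$ and apply $\sup(A+B)\le\sup A+\sup B$. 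Each one-variable maximum $\sup_{t\ge 0}\!\left(\tfrac{A}{r}t^r-\tfrac{B\mu}{2}t^{\gamma}\right)$, for $r\in\{p,N\}$, is attained at $t^*=(2A/(\gamma B\mu))^{1/(\gamma-r)}$ and equals $\bigl(\tfrac1r-\tfrac1\gamma\bigr)(2/\gamma)^{r/(\gamma-r)}\mu^{-r/(\gamma-r)}(A^\gamma/B^r)^{1/(\gamma-r)}$, which reproduces each summand of $D_2$ (with $\|\phi_0\|_{L^\gamma}^\gamma$ replaced by $\eta_*:=\eta(t^*,t_0)$) divided by the corresponding power of $\mu$. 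Since $p<N<\gamma$ gives $N/(\gamma-N)>p/(\gamma-p)$, for $\mu\ge 1$ we have $\mu^{-N/(\gamma-N)}\le \mu^{-p/(\gamma-p)}$, and so
$$\sup_{t\ge 0} I_\epsilon(t\phi_0)\ \le\ \frac{D_2(\eta_*)}{\mu^{p/(\gamma-p)}}+\mu t_0^{\gamma}m(\mathrm{supp}\,\phi_0).$$

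Since $\phi_0\in C_c^\infty(\mathbb{R}^N)$ with $\phi_0>0$ on $\mathrm{supp}\,\phi_0$, the maximizer $t^*$ stays in a compact subinterval of $(0,\infty)$ as $t_0\to 0^+$, so $\eta_*\to\|\phi_0\|_{L^\gamma}^\gamma$ (by dominated convergence) and $\mu t_0^\gamma m(\mathrm{supp}\,\phi_0)\to 0$. Combined with \eqref{5.11}, namely $D_2/\mu^{p/(\gamma-p)}<b_0/2$, we conclude
$$\limsup_{t_0\to 0^+}\sup_{t\ge 0} I_\epsilon(t\phi_0)\ <\ \tfrac{b_0}{2}.$$
Thus, one first chooses $t_1\in(0,t_*)$ small enough that $\sup_t I_\epsilon(t\phi_0)<3b_0/4$ for every $t_0\in[0,t_1)$, and then picks $\bar\epsilon\in(0,\epsilon_0]$ small enough that $\epsilon\|g\|_*M(\beta-1)/\beta<b_0/4$ for all $\epsilon\in(0,\bar\epsilon]$; the desired inequality $b_\epsilon<b_0-\epsilon\|g\|_*M(\beta-1)/\beta$ follows. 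The main obstacle is ensuring uniformity of the limit $\eta_*\to\|\phi_0\|_{L^\gamma}^\gamma$: one must verify that each maximizer $t^*$ of the split expression remains in a compact subset of $(0,\infty)$ that is independent of $t_0$ (for $t_0$ small), so that $\eta(t^*,t_0)$ really approaches $\|\phi_0\|_{L^\gamma}^\gamma$ and the $D_2(\eta_*)$-bound actually reduces to the $D_2$-bound appearing in \eqref{5.11}.
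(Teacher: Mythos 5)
Your argument is essentially the paper's: bound $b_\epsilon$ along the ray $s\mapsto s\phi_0$, use \ref{f5} to obtain a polynomial majorant of $I_\epsilon(t\phi_0)$, maximize the two one-variable functions in $t$ to recover $D_2/\mu^{p/(\gamma-p)}<b_0/2$ via \eqref{5.11}, and then shrink $t_0$ and $\epsilon$ to absorb $\mu t_0^\gamma m(\mathrm{supp}\,\psi_0)$ and $\epsilon\|g\|_*M(\beta-1)/\beta$. The one place you diverge --- restricting \ref{f5} to the set $[t\phi_0\ge t_0]$ and then chasing the limit $\eta(t^*,t_0)\to\|\phi_0\|_{L^\gamma}^\gamma$, which you flag as the ``main obstacle'' --- is an unnecessary detour: by \ref{f1} one has $F(x,s)=0\ge\mu(s^\gamma-t_0^\gamma)$ also for $0\le s\le t_0$, so the lower bound $\int_{\mathbb{R}^N}F(x,t\phi_0)\,\dx\ge\mu t^\gamma\|\phi_0\|_{L^\gamma}^\gamma-\mu t_0^\gamma m(\mathrm{supp}\,\phi_0)$ holds for all $t\ge 0$ without any restriction of the domain of integration, which is what the paper uses and which removes the circularity between the maximizer $t^*$ and $\eta_*$.
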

\begin{proof} Since $\mu>\mu^*$ and $\phi_0=t\psi_0$, So fix $\bar{\epsilon}\in(0,\epsilon_0]$ so small such that 
\begin{equation*}
\frac{b_0}{2}\geq \epsilon \|g\|_{*}M\frac{\beta-1}{\beta}, ~\forall~\epsilon\in(0,\bar{\epsilon}).
\end{equation*}
As $I_{\epsilon}(0)=0$, hence $I_{\epsilon}(t\psi_0)\rightarrow 0~\text{as}~t\rightarrow 0_+.$ So, we have $t_0$ such that 
\begin{equation*}
\underset{t\in[0,t_0]}{\sup}I_{\epsilon}(t\psi_0)<\frac{b_0}{2}\leq b_0-\epsilon \|g\|_{*}M\frac{(\beta-1)}{(\beta)},~\forall~\epsilon\in(0,\bar{\epsilon})~\text{and}~t_0\in[0,\tilde{t}_1),\end{equation*} for some $\tilde{t}_1>0.$
Also we have 
\begin{equation}\label{funcI}
I_{\epsilon}(t\psi_0)=\frac{t^p}{p}\| \psi_0\|_{W_{V}^{1,p}}+\frac{t^N}{N}\| \psi_0\|_{W_{V}^{1,N}}-\underset{\mathbb{R}^N}{\int} F(x,t\psi_0)\dx-t\epsilon \underset{\mathbb{R}^N}{\int} g\psi_0\dx.
\end{equation}
We recall that by \ref{f5}, there exist $\gamma>N$ and $\mu>0$, satisfying
{$$ F(x,t)\geq \mu(t^{\gamma}-t_0^{\gamma})~\text{for}~t\geq t_0,~\forall ~x\in \mathbb{R}^N.$$}So using \ref{f5} in \eqref{funcI}, we further deduce
\begin{align*}
\underset{t\geq t_0}{\sup}I_{\epsilon}(t\psi_0)&\leq \underset{t\geq t_0}{\sup}\left[ \frac{t^p}{p}\| \psi_0\|_{W_{V}^{1,p}}^p+\frac{t^N}{N}\| \psi_0\|_{W_{V}^{1,N}}^N-\underset{\mathbb{R}^N}{\int} F(x,t\psi_0)\dx\right]\\
&\leq \underset{t\geq t_0}{\sup}\left[ \frac{t^p}{p}\| \psi_0\|_{W_{V}^{1,p}}^p+\frac{t^N}{N}\| \psi_0\|_{W_{V}^{1,N}}^N-\mu t^{\gamma}\underset{\mathbb{R}^N}{\int}\psi_0^\gamma \dx+\mu t_0^\gamma m(\mathrm{supp}(\psi_0)) \right]\\
&\leq \underset{t\geq t_0}{\max}\left[\frac{t^p}{p}\| \psi_0\|_{W_{V}^{1,p}}^p-\frac{\mu}{2}t^{\gamma}\|\psi_0\|_{L^{\gamma}}^\gamma\right]+\underset{t\geq t_0}{\max}\left[\frac{t^N}{N}\| \psi_0\|_{W_{V}^{1,N}}^N-\frac{\mu}{2}t^{\gamma}\|\psi_0\|_{L^{\gamma}}^\gamma\right]\\ &\qquad +\mu t_0^\gamma m(\mathrm{supp}(\psi_0)).
\end{align*}
Consider the function $G(t)=\frac{t^p}{p}\| \psi_0\|_{W_{V}^{1,p}}^p-\frac{\mu}{2}t^{\gamma}\|\psi_0\|_{L^{\gamma}}^\gamma$. From elementary calculus, one can easily verify that $G$ attains its maximum at $t=\left(\frac{2\| \psi_0\|_{W_{V}^{1,p}}^p}{\gamma\mu \|\psi_0\|_{L^{\gamma}}^\gamma}\right)^\frac{1}{\gamma-p}$. Similar type of result also hold for $$T(t)=\frac{t^N}{N}\| \psi_0\|_{W_{V}^{1,N}}^N-\frac{\mu}{2}t^{\gamma}\|\psi_0\|_{L^{\gamma}}^\gamma.$$
So, we have 
\begin{align*}
\underset{t\geq t_0}{\sup } I_{\epsilon}(t\psi_0) &\leq \frac{1}{\mu^\frac{p}{\gamma-p}}\left[\left(\frac{1}{p}-\frac{1}{\gamma}\right)\left(\frac{2}{\gamma}\right)^\frac{p}{\gamma-p}\left(\frac{\|\phi_0\|_{W_{V}^{1,p}}}{\|\phi_0\|_{L^{\gamma}}}\right)^\frac{p\gamma}{\gamma-p}\right]\\ &\qquad +\frac{1}{\mu^\frac{N}{\gamma-N}}\left[\left(\frac{1}{N}-\frac{1}{\gamma}\right)\left(\frac{2}{\gamma}\right)^\frac{N}{\gamma-N}\left(\frac{\|\phi_0\|_{W_{V}^{1,N}} }{\|\phi_0\|_{L^{\gamma}}}\right)^\frac{N\gamma}{\gamma-N}\right]
+\mu t_0^{\gamma}m(\mathrm{supp} \psi_0).
\end{align*}
Since $\mu>\mu*\geq 1$ and $\frac{p}{\gamma-p}<\frac{N}{\gamma-N}$. This implies $\mu^\frac{p}{\gamma-p}<\mu^\frac{N}{\gamma-N}.$  So 
\begin{equation}\label{5.14}
\underset{t\geq t_0}{\sup}I_{\epsilon}(t\psi_0)\leq \frac{D_2}{\mu^\frac{p}{\gamma-p}}+\mu t_0^{\gamma}m(\mathrm{supp} \psi_0)<\frac{b_0}{2}+\mu t_0^{\gamma}m(\mathrm{supp} \psi_0).
\end{equation}
Now we choose $t_2>0$ such that 
\begin{equation}\label{5.15}
\mu t_0^{\gamma}m(\mathrm{supp} \psi_0)<\frac{b_0}{2}-\epsilon \|g\|_{*}M\frac{\beta-1}{\beta},~\forall~t_0\in[0,t_2).
\end{equation}
Choose $t_1=\text{min}\{\tilde{t}_1,t_2\}$. From equation \eqref{5.14} and \eqref{5.15} we conclude that
\begin{equation*}
\underset{t\geq t_0}{\sup}I_{\epsilon}(t\psi_0)<b_0-\epsilon \|g\|_{*}M\frac{(\beta-1)}{(\beta)},~\forall~t_0\in[0,t_1)~\text{and}~\epsilon\in(0,\bar\epsilon).
\end{equation*}
Now consider $\gamma(s)=s\phi_0\in \Gamma$, $s\in[0,1]$ such that
\begin{equation*}
b_\epsilon\leq\underset{s\in[0,1]}{\max}I_{\epsilon}(\gamma(s))\leq\underset{s\geq 0}{\max}I_{\epsilon}(\gamma(s))<b_0-\epsilon \|g\|_{*}M\frac{(\beta-1)}{\beta}.
\end{equation*}
Hence, the proof of the Lemma follows.\end{proof}
Now, we are in a situation to prove Theorem \ref{main result 2}.
\begin{proof}[Proof of Theorem \ref{main result 2}] From Lemma \ref{5.1}, we conclude that the functional $I_{\epsilon}$ satisfy mountain pass geometry and obtained a $ (PS)_{b_{\epsilon}}$ sequence $\{v_n\}\subset\mathbf{X}$ which satisfy 
\begin{equation*}
I_{\epsilon}(v_n)\rightarrow b_{\epsilon}~\text{and}~\lambda_{\epsilon}(v_n)\rightarrow 0.
\end{equation*}
Lemma \ref{5.2} implies that $\{v_n\} $ is bounded in $\mathbf{X}.$ This implies  $v_n\rightharpoonup v_\epsilon$, so up to a subsequence still denoted by itself, we have 
$$\begin{cases}
 v_n\rightarrow v_\epsilon ~~\text{a.e. ~~in}~~ \mathbb{R}^N,\\
 v_n\rightharpoonup v_\epsilon ~~\text{in}~~W_{V}^{1,t}(\mathbb{R}^N),~t\in\{p,N\},\\
 v_n\rightarrow v_{\epsilon}~~\text{in}~~L^q(\mathbb{R}^N),~\forall~q\in[1,+\infty).
 \end{cases}$$
Therefore, by \eqref{5.12} and Lemmas \ref{le5.3}--\ref{le5.4}, we deduce 
 \begin{equation*}
  \underset{n\rightarrow +\infty}{\mathrm{lim~sup}}\|v_n\|_{W^{1,N}}^\frac{N}{N-1}<\frac{\alpha_N}{\alpha_0}.
 \end{equation*}
Also, from Lemma \ref {l4.3}, it follows that $\nabla v_n\rightarrow \nabla v_{\epsilon}$ a.e. in $\mathbb{R}^N.$  Using the same argument as in Theorem \ref{main result 1}, we can show that $v_n\rightarrow v_\epsilon$ in $\mathbf{X}.$  So from Lemma \ref{5.4}, we conclude that $I_{\epsilon}$ admits a critical point $v_\epsilon$ upto a mountain pass level $b_\epsilon$ estimated in Lemma \ref{5.4}. The non-negativity of the solution $v_\epsilon$, conditions \ref{solution 2}, \ref{solution 3} can be proved using the same arguments as used in the proof of Theorem \ref{main result 1}. Hence, $v_\epsilon\in \mathbf{X}$ is the solution for problem \eqref{main problem} such that $I_\epsilon(v_\epsilon)=b_\epsilon>0.$\end{proof}
Finally, we include the proof of Theorem \ref{main result 3}.
\begin{proof}[Proof of Theorem \ref{main result 3}] The proof of this theorem is a direct consequence of Theorems \ref{main result 1} and \ref{main result 2}. Hence, we obtain two distinct nonnegative solutions in $\mathbf{X}$ such that 
$I_{\epsilon}(u_\epsilon)=c_{\epsilon}<0<b_\epsilon=I_\epsilon(v_\epsilon). $  \end{proof}
\section*{Acknowledgements}{First of all, the authors would like to express their gratitude to the anonymous referees for their meaningful comments and suggestions that contributed to improving the manuscript.} AS expresses his heartfelt gratitude for the DST-INSPIRE Grant DST/INSPIRE/04/2018/002208, funded by the Government of India.
\bibliography{ref}
\bibliographystyle{abbrv} 
\end{document}